\numberwithin{equation}{section}
\newtheorem{defi}{Definition}[section]
\newtheorem{thm}[defi]{Theorem}
\newtheorem{lemm}[defi]{Lemma}
\newtheorem{rem}[defi]{Remark}
\newtheorem{cor}[defi]{Corollary}
\newtheorem{prop}[defi]{Proposition}
\newtheorem{exam}[defi]{Example}
\crefname{lemm}{lemma}{lemmas}
\crefname{cor}{corollary}{corollaries}
\newcommand{\convh}{\operatorname{convh}}
\newcommand{\HH}{\mathcal{H}}
\newcommand{\mesh}{\mathrm{mesh}} 
\newcommand{\disp}{\mathrm{disp}}
\newcommand{\diam}{\mathrm{diam}}
\newcommand{\B}{\mathcal{B}}
\newcommand{\EE}{\mathcal{E}}
\newcommand{\U}{\mathcal{U}}
\newcommand{\DD}{\mathcal{D}}
\newcommand{\E}{\mathbb{E}}
\newcommand{\N}{\mathbb{N}}
\newcommand{\R}{\mathbb{R}}
\newcommand{\SSS}{\mathcal{S}}
\newcommand{\TTT}{\mathcal{T}}
\newcommand{\OO}{\mathcal{O}}
\newcommand{\I}{\mathcal{I}}
\newcommand{\J}{\mathcal{J}}
\newcommand{\FF}{\mathcal{F}}
\newcommand{\red}{\textcolor{red}}
\newcommand{\CC}{\mathcal{C}}
\newcommand{\norm}[1]{\left\|#1\right\|}
\begin{document}

\title[Minimal Convex Environmental Contours]
{Minimal Convex Environmental Contours}

\author[Å. H. Sande]{Åsmund Hausken Sande}
\address{Å. H. Sande: Department of Mathematics, University of Oslo, Moltke Moes vei 35, P.O. Box 1053 Blindern, 0316 Oslo, Norway.}
\email{aasmunhs@math.uio.no}
\author[J. S. Wind]{Johan S. Wind}
\address{J. S. Wind: Department of Mathematics, University of Oslo, Moltke Moes vei 35, P.O. Box 1053 Blindern, 0316 Oslo, Norway.}
\email{johanswi@math.uio.no}

\maketitle

\begin{center}
This Version : \today
\end{center}

\begin{abstract}
We develop a numerical method for the computation of a minimal convex and compact set, $\mathcal{B}\subset\mathbb{R}^N$, in the sense of mean width. This minimisation is constrained by the requirement that $\max_{b\in\mathcal{B}}\langle b , u\rangle\geq C(u)$ for all unit vectors $u\in S^{N-1}$ given some Lipschitz function $C$. 

This problem arises in the construction of environmental contours under the assumption of convex failure sets. Environmental contours offer descriptions of extreme environmental conditions commonly applied for reliability analysis in the early design phase of marine structures. Usually, they are applied in order to reduce the number of computationally expensive response analyses needed for reliability estimation.

We solve this problem by reformulating it as a linear programming problem. Rigorous convergence analysis is performed, both in terms of convergence of mean widths and in the sense of the Hausdorff metric. Additionally, numerical examples are provided to illustrate the presented methods.
\end{abstract}

\vskip 0.1in
\noindent\textbf{Keywords}:  Environmental Contours, Linear Programming, Structural Reliability

\noindent\textbf{MSC2020:} 65D18, 90B25, 90C05 

\section{Introduction}
Environmental contours are mathematical tools applied to analyse the reliability of marine structures, often used in the early design phase of e.g.\ ships or oil platforms. They provide a summary statistic of the relevant environmental factors which reduces the number of computationally expensive response analyses needed. Due to this, environmental contours are widely used in reliability analysis of various marine structures \cite{baarholm2010combining,fontaine2013reliability,giske2018long,vanemTrend}, and is listed in the \textit{recommended practices - environmental conditions and environmental loads} document by DNV (Det Norske Veritas) \cite{veritas2000environmental}. The latter document provides algorithms for practical computation of environmental contours, with certain parts relying on heuristic methods. In this article, we provide algorithms to resolve this issue.

We consider $N$ environmental factors governed by a stochastic process in $\R^N$. A marine structure is then assumed to have a failure set $\FF \subset \R^N$ of environmental conditions it cannot safely handle. Let $\tau_\FF=\inf\{t:V_t\in\FF\}$ be the first hitting time of the failure set. Generally, an environmental contour is the boundary of a set $\B \subset \R^N$, representing safe environmental conditions the structure should withstand. Since the exact shape of the failure set $\FF$ is often unknown in the early design stages, the environmental contour is chosen to restrain the time to failure, $\tau_\FF$, for \textit{any} failure set $\FF$ not overlapping with the chosen set $\B$. Usually, this restriction comes in the form of an indirect lower bound on the \textit{return period} $\E[\tau_\FF]$. 

Note that while this formulation considers failure sets, it is a slight abuse of terminology. The definition of environmental contours is often decoupled from the structural response, effectively ignoring it. The response of a structure to environmental stress is usually considered to be stochastic, even for fixed environmental conditions. As such, the worst response may randomly occur outside of $\FF$. The formulation in terms of $\tau_\FF$ is still used in, or is equivalent to, most definitions of environmental contours, and more succinctly expresses the underlying ideas behind environmental contours. For a few examples on how these contours are connected with structural response in practise we refer to \cite{sagrilo2011long,giske2018long}. Both discuss the use of environmental contours in order to find a singular point, referred to as the \textit{design point}, around which the most critical environmental conditions lie. Based on this point, an importance sampling procedure can be carried out. It is also worth mentioning that \cite{giske2018long} also uses quantiles of the response distribution along the contour to estimate so-called \textit{characteristic extreme response}.

Several methods for defining and computing environmental contours exist, the most popular of which is the inverse first order reliability method (IFORM) developed in \cite{IFORM2008,IFORM}. For a thorough summary and comparison of different contour methods we refer to \cite{contcompare,contsummary}, and several models for the environmental variables along with their resulting contours are compared in \cite{Leira}. 

In this article, we will consider methods based on the additional assumption that all possible failure sets are convex. Several approaches within this setting exist, such as the method based on direct Monte Carlo simulation developed in \cite{firstaltcontour,altcontour}. This method was recently included alongside IFORM in the aforementioned \textit{best practices} document for environmental loads by DNV \cite{veritas2000environmental}, and has been further studied and extended in several other papers. For example, convexity properties of the resulting contours, as well as the inclusion of omission factors, was studied in \cite{convcont} and the extension of \textit{buffered} contours was introduced in \cite{dahl2018buffered}. 
There are also other similar approaches, such as \cite{huseby2023AR1,mackay2023model,vanem2023analysing}, which consider serial correlation of $V$, and \cite{sande2023convex}, which extends the theory of \cite{altcontour} to a serially dependent and non-stationary setting. The common factor of these approaches, is that convexity of the failure sets allows the requirements on $\B$ to be stated as minimum outreach requirements. Specifically, all the requirements are lower bounds on the outreach function of $\B$. The outreach is defined as $B(\B,u) = \max_{b\in\B}\langle b , u\rangle$ for directions $u$ on the standard hypersphere $S^{N-1}$.

The requirements can be succinctly stated as $B(\B,u)\geq C(u)$ for all $u\in S^{N-1}$, where $C$ is some function computed to ensure the desired restriction of the failure time, usually a lower bound on the return period, $\E[\tau_\FF]$. If this inequality holds, we say that $\partial\B$ is a \textit{valid} contour. The goal is then to find a valid contour which is minimal in some sense. Whenever there exists a $\B$ satisfying $B(\B,u) = C(u)$, that $\B$ is trivially optimal. When this is the case, we will refer to $\partial\B$ as a \textit{proper} contour. However, a proper contour does not always exist. In \cite{convcont}, they discuss how estimation errors in $C$ can lead to the absence of proper contours. To correct this they suggest inflating $C$ by adding a sufficiently large constant, ensuring that the resulting contour is valid, as well as proper with respect to the inflated $C$. Similarly, in \cite{voronoi}, examples of distributions for $V$ are given which allow for no proper contours, regardless of estimation errors. The article also proposes the following method for constructing valid contours in this setting. They construct an invalid contour based on $C$, and then extend it in all directions in order to ensure validity. However, both these methods fail to establish \textit{minimal} valid contours in a general sense. The contour is intended to represent the most extreme safe conditions for a structure. However, if the contour is made too big it imposes stronger constraints on the class of structures it applies to, thereby shrinking this class. As such it is of interest to construct contours which are minimal in some sense, which would allow us to apply the contour, and the resulting restrictions on $\tau_\FF$, to as wide a class as possible.

Our goal in this article is to present a method for constructing valid contours that are minimal in the sense of \textit{mean width} (a generalisation of the perimeter length to more than two dimensions). We show how to solve this by casting it as a linear programming problem. We prove bounds on the sub-optimality incurred by discretisation of the continuous problem, and give convergence results to ensure that our method can find solutions with mean width arbitrarily close to the optimum.

{
In \Cref{sec:mainresultin2d}, we provide a simple explanation of our main goal and results for the two-dimensional case. To proceed, we first cover some necessary definitions and results in \Cref{sec:setup}, before moving on to \Cref{sec:mincont}, where we present our main results in a general setting. Specifically, we prove that our method provides arbitrarily near-optimal solutions. In order to illustrate our results, we present numerical examples in \Cref{sec:examples}. Next, we present several results that guarantee convergence of our method in the Hausdorff metric in \Cref{sec:hausdorff}. As it turns out that our method can be simplified and improved in the two-dimensional case, we briefly present the improved method in \Cref{sec:bonus2d}. Finally, as part of our method, we use quadratures for numerical integration with certain properties. We present some simple constructions for generating such quadratures in \Cref{sec:quadraturecomp}.
}

\section{Main results in two dimensions}\label{sec:mainresultin2d}
We state our main results in two dimensions here, and wait until the necessary setup has been made before stating the general result in \Cref{thm:DPepsinCPeps_discrete}. This is only to give a simplified statement, our method and proofs are developed for general dimensions.

We are interested in finding the convex, compact shape $\B$ with the smallest perimeter, which has a given outreach in each direction. Specifically, the data to our problem is a $L_C$-Lipschitz function from the unit circle to the real numbers, $C \colon S^1 \to \R$. We denote the maximum absolute outreach requirement $\norm{C}_\infty \coloneqq \sup_{u \in S^1} |C(u)|$. Formally, the outreach requirement on $\B$ is this: For all directions $u \in S^1$, there exists some $p \in \B$ such that $\langle p, u\rangle \ge C(u)$.

Numerically, we only access $C$ through a finite number of samples. In two dimensions, we can sample $m$ evenly spaced directions $\{u_i\}_{i=1}^m$. If we restrict $\B$ to polygons with sides perpendicular to the directions $\{u_i\}_{i=1}^m$, and only consider outreach requirements in those directions, we can formulate the resulting problem as a linear programming problem \eqref{lpprob}. In two dimensions there is also a more efficient formulation \eqref{lpprob_2d1}. These linear programs can be solved efficiently using standard techniques, giving an optimal solution $\widetilde \B$ to the discretised problem.

Our theory shows that $\widetilde \B$ is nearly an optimal solution to the continuous problem. Specifically, if we inflate $\widetilde \B$ (in the sense of \Cref{lem:contour_extension}) by a small amount $\mathcal{O}\left(\frac{1}{m}(L_C+\norm{C}_\infty)\right)$, it is guaranteed to satisfy the outreach requirements in {\it all} directions. Furthermore, the inflated $\widetilde \B$ has a perimeter at most $\mathcal{O}\left(\frac{1}{m}(L_C+\norm{C}_\infty)\right)$ more than the optimal perimeter over {\it all} convex, compact shapes satisfying the outreach requirements.

When the number of dimensions is more than two, the perimeter is generalised to the mean width (\Cref{def:meanwidth}), and evenly spaced sample directions are generalised to $\epsilon$-accurate quadratures (\Cref{def:epsilon_accurate}) with low dispersion (\cref{def:dispersion}). We define these concepts in the next section.

\section{basic intro to needed theory}\label{sec:setup}
 In this section we will define the main functions we will need throughout the article.

\subsection{Standard Notation}
As in \Cref{sec:mainresultin2d} we denote by $\|\cdot\|$ and $\langle\cdot,\cdot\rangle$ the canonical norm and inner product on $\R^N$. Furthermore, the hypersphere in $\R^N$ is defined by $S^{N-1}=\{u\in\R^N:\|u\|=1\}$. We will also need the uniform probability measure on $S^{N-1}$, denoted by $\sigma$.

\subsection{Convexity}
A key concept when dealing with convex sets is their outreach.

\begin{defi}\label{def:outreach}
    For any set $\B\subset\R^N$ we define the \textit{outreach function} of $\B$ as
    $$B(\B,u)=\sup_{b\in\B}\langle b , u\rangle.$$
    This function is also commonly referred to as the \textit{support function} of $\B$.
\end{defi}

A closely related concept is the idea of a hyperplane.

\begin{defi}\label{def:half_hyper}
    We define the \textit{hyperplane} for some $ b\in\R $,  $ u\in S^{N-1} $ as
    \begin{equation*}
    \Pi(u,b)   = \{v\in\R^N:\langle u,v\rangle  =  b \}.
    \end{equation*}
    
    We further define the \textit{half-spaces}
    \begin{align*}
    	\begin{split}
    		\Pi^-(u,b) &=  \{v\in\R^N:\langle u,v\rangle  \leq  b\},\\
    		\Pi^+(u,b) &=  \{v\in\R^N:\langle u,v\rangle  \geq  b\}.
    	\end{split}
    \end{align*}
\end{defi}

Consider then $b\in\R$, $u\in S^{N-1}$, and some non-empty, convex, and compact $\B\subset\R^N$. Since $\B$ is compact we have that $B(\B,u)$ is finite. We can even guarantee some regularity of $B(\B,\cdot)$ by the following result.

\begin{prop}\label{prop:B_lipz}
  Let $\mathcal{B} \subset \R^N$ be a non-empty, convex, and compact set. Denote the maximum radius $R \coloneqq \sup_{p \in \mathcal{B}} \norm{p}_2$. Then $B(\mathcal{B},u) = \max_{p \in \mathcal{B}} \langle p, u\rangle$ is $R$-Lipschitz as a function of $u \in S^{N-1}$. Note also $|B(\mathcal{B},u)| \le R$.
\end{prop}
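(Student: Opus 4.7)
The plan is to handle both claims with straightforward applications of Cauchy--Schwarz, leveraging the compactness of $\mathcal{B}$ to replace the supremum with a maximum.

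First I would justify that $B(\mathcal{B},u)$ can indeed be written as a maximum rather than a supremum: the function $p \mapsto \langle p, u\rangle$ is continuous on the non-empty compact set $\mathcal{B}$, so it attains its supremum. This lets me pick an optimiser $p_u \in \mathcal{B}$ with $B(\mathcal{B}, u) = \langle p_u, u\rangle$ for each $u \in S^{N-1}$.

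For the bound $|B(\mathcal{B}, u)| \le R$, I would observe that for any $p \in \mathcal{B}$, Cauchy--Schwarz gives $|\langle p, u\rangle| \le \|p\|\,\|u\| \le R$ since $\|u\|=1$ and $\|p\|\le R$ by definition of $R$. Taking the maximum over $p$ (and a symmetric argument for the lower bound by replacing $u$ with $-u$ if needed) yields the result.

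The Lipschitz estimate is the slightly less trivial part, but still short: for $u, v \in S^{N-1}$, using the optimiser $p_u$ for direction $u$, I would write
\begin{equation*}
B(\mathcal{B}, u) - B(\mathcal{B}, v) \le \langle p_u, u\rangle - \langle p_u, v\rangle = \langle p_u, u - v\rangle \le \|p_u\|\,\|u - v\| \le R\,\|u - v\|,
\end{equation*}
where the first inequality holds because $\langle p_u, v\rangle \le B(\mathcal{B}, v)$. Swapping the roles of $u$ and $v$ gives the reverse inequality, so $|B(\mathcal{B}, u) - B(\mathcal{B}, v)| \le R\,\|u-v\|$, as required.

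I do not expect any real obstacle here; the main subtlety is the standard trick of estimating the difference of two maxima by fixing an optimiser for one of them and trivially upper-bounding the other maximum, rather than trying to compare the two optimisers directly.
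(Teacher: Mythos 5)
Your proof is correct and takes essentially the same approach as the paper: the paper invokes the general fact that a supremum of $R$-Lipschitz functions is $R$-Lipschitz, while you simply unfold the short proof of that fact (fix an optimiser for one direction and bound the other maximum from below). Both use Cauchy--Schwarz in the same place.
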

\begin{proof}
  For fixed $p \in \mathcal{B}$, the function $\langle  \cdot,p\rangle$ is $\norm{p}_2$-Lipschitz, hence $R$-Lipschitz. Since $B(\mathcal{B},u)$ is the supremum of $R$-Lipschitz functions, it is itself $R$-Lipschitz.
\end{proof}

\Cref{prop:B_lipz} guarantees that $B(\B,\cdot)$ is integrable which allows us to define our key measure of size for convex compact sets.

\begin{defi}\label{def:meanwidth}
    The width of a a non-empty, convex, and compact set $\B\subset\R^N$ along a vector $u\in S^{N-1}$ can be written as $B(\B,u)-B(\B,-u)$. As such, we define the \textit{mean width} of $\B$ by
    $$\int_{S^{N-1}}(B(\B,u)-B(\B,-u))d\sigma(u).$$
    Note that this equals
    $$2\int_{S^{N-1}}B(\B,u)d\sigma(u).$$
\end{defi}
For convex shapes in two dimensions, the mean width is equal to the perimeter divided by $\pi$.

Lastly, if $B(\B,u)\leq b$ we must also have $\B\subset \Pi^-(u,b)$. This leads to the following unique representation of compact convex sets. This result is a special case of Theorem 18.8 in \cite{rockafellar1997convex}.

\begin{prop}\label{prop:support_rep}
    Let $\B\subset\R^N$ be convex and compact, we then have
    $$\B=\bigcap_{u\in S^{N-1}}\Pi^-(u,B(\B,u)).$$
\end{prop}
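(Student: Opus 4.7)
The plan is to prove the equality by showing the two inclusions separately. The inclusion $\B\subseteq\bigcap_{u\in S^{N-1}}\Pi^-(u,B(\B,u))$ is immediate from definitions: for each fixed $u\in S^{N-1}$ and any $b\in\B$, we have $\langle b,u\rangle\leq\sup_{b'\in\B}\langle b',u\rangle=B(\B,u)$, so $b\in\Pi^-(u,B(\B,u))$. Since this holds for every $u$, $b$ lies in the intersection.

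The reverse inclusion is the substantive part and is where I expect the real work. I would prove the contrapositive: if $x\notin\B$, then $x$ fails to lie in at least one of the half-spaces $\Pi^-(u,B(\B,u))$. For this I invoke a hyperplane separation theorem (a consequence of Hahn-Banach, or a direct geometric argument using the nearest-point projection onto $\B$). Concretely, since $\B$ is non-empty, convex, and compact, and $\{x\}$ is a disjoint compact convex set, there exist $u_0\in\R^N\setminus\{0\}$ and $c\in\R$ with $\langle b,u_0\rangle<c<\langle x,u_0\rangle$ for all $b\in\B$. Normalising so that $u_0\in S^{N-1}$ (which only rescales $c$), we get $B(\B,u_0)\leq c<\langle x,u_0\rangle$, so $x\notin\Pi^-(u_0,B(\B,u_0))$, hence $x$ is not in the intersection.

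The main obstacle is really just making sure the separation theorem is applied in the strict form (so that we can place $x$ strictly on one side of some hyperplane that still bounds all of $\B$); this requires the compactness of $\B$, which is given. If one prefers an elementary argument over invoking Hahn-Banach directly, an alternative is to take $b^\ast\in\B$ to be the (unique) nearest point to $x$ in $\B$, set $u_0=(x-b^\ast)/\|x-b^\ast\|$, and verify via the variational characterisation of the projection that $\langle b,u_0\rangle\leq\langle b^\ast,u_0\rangle<\langle x,u_0\rangle$ for all $b\in\B$; this yields the same conclusion. Either route gives the missing inclusion and completes the proof. Since the statement is noted to be a special case of Theorem 18.8 in \cite{rockafellar1997convex}, one could also simply cite that result, but the direct argument above is short and self-contained.
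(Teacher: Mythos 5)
Your proof is correct. Note, however, that the paper itself does not prove this proposition: it simply records it as a special case of Theorem 18.8 in Rockafellar's \emph{Convex Analysis} and moves on, so there is no in-paper argument to compare against. Your two-inclusion structure is the standard one. The forward inclusion is indeed immediate from the definitions of $B(\B,\cdot)$ and $\Pi^-$. For the reverse inclusion, the strict separation of a point from a disjoint non-empty closed convex set is exactly the right tool, and you are right that compactness (or more precisely, closedness and non-emptiness) is what makes the separation strict; normalising $u_0$ to unit length and absorbing the rescaling into $c$ then gives $B(\B,u_0)\leq c<\langle x,u_0\rangle$ as you say. The alternative you sketch via the nearest-point projection onto $\B$ is also correct and is in fact the standard elementary way to \emph{prove} the strict separation theorem in $\R^N$, so the two routes you offer are really the same argument at different levels of abstraction. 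One tiny remark: the statement as written allows $\B=\emptyset$ (which is vacuously convex and compact), in which case $B(\B,u)=-\infty$ and both sides of the identity are empty; your separation step implicitly assumes $\B$ non-empty, which is the running assumption in the paper, so this is not a real gap, but if you wanted a fully airtight self-contained version you would dispose of the empty case in one line first.
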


\subsection{Numerical Definitions}

In order to compute the mean width our convex sets we will need to employ a numerical integration method providing universal bounds on the integration error. To address this we introduce the following.

\begin{defi}\label{def:epsilon_accurate}
  We say a set of points and weights $\{(u_i,w_i)\}_{i=1}^m \subset S^{N-1} \times [0,\infty)$ is an \it{$\epsilon$-accurate} quadrature, if the following holds for all $L$-Lipschitz continuous functions $f \colon S^{N-1} \to \R$. For all functions $f$ satisfying $|f(u)-f(v)| \le L \norm{u-v}_2$ for all $u,v \in S^{N-1}$, we have
  \begin{align}\label{eq:epsilon_accurate_def}
    \left|\int_S f(u) \,d\sigma(u) - \sum_{i=1}^m f(u_i) w_i\right| \le \epsilon (L+\norm{f}_\infty).
  \end{align}
\end{defi}

In order to control this error we will also need a related concept characterising the spread of some finite $\SSS\subset S^{N-1}$.

\begin{defi}\label{def:dispersion}
    We define the \textit{dispersion} of a set $\SSS\subset S^{N-1}$ by
    $$\disp(\SSS)=\sup_{u\in S^{N-1}}\inf_{v\in\SSS}\|u-v\|.$$
\end{defi}

\begin{rem}
    In order to carry out the numerical integration necessary in this paper, we consider a general grid $\SSS=\{u_i\}_{i=1}^m\subset S^{N-1}$ and set of weights $W=\{w_i\}_{i=1}^m$, with some constraints on $\disp(\SSS)$ and the accuracy of $\{(u_i,w_i)\}_{i=1}^m$. However, we also give specific constructions of $\epsilon$-accurate quadratures in \Cref{sec:quadraturecomp}.
\end{rem}

\section{Minimal Valid Contours}\label{sec:mincont}

In this section, we aim to compute minimal valid contours in the sense of mean width. To set the scene. We will start by defining relevant concepts about our original and approximating discrete problem, for then to present several results guaranteeing control over estimation errors.

\subsection{The Continuous Problem}

The main problem we want to solve can be precisely formulated as follows.

\begin{align}\label{cpprob}
	\text{minimise }   \quad\quad\quad & \int_{S^{N-1}} B(\B,u) d\sigma(u)& \\
	\text{subject to } \quad\quad\quad & B(\B,\cdot)\geq C(\cdot), & \label{cpconstr}\\
	& \B \text{ convex and compact.} &\nonumber
\end{align}

To examine this problem, we introduce the following notation.

\begin{defi}\label{def:cpdef}
    We denote the set of feasible solutions to the continuous problem \eqref{cpprob} by
    $$\CC^\infty = \left\{ \B\subset\R^N \mid \B \text{ convex and compact, } B(\B,\cdot)\geq C(\cdot) \right\}.$$
    The optimal value of the objective function is denoted by
    $$V^{CP} = \inf \left\{ \int_{S^{N-1}} B(\B,u) d\sigma(u) ,\, \B\in\CC^\infty\right\}.$$
    Lastly, we define the $\gamma$-near optimal solution space by
    $$\CC^\gamma = \left\{ \B\in\CC^\infty \Big| \int_{S^{N-1}} B(\B,u) d\sigma(u) - V^{CP} \leq \gamma\right\}.$$
\end{defi}

It turns out that this problem has an optimal solution under our running assumption that $C$ is continuous. This result is proven through consideration of the Hausdorff metric, as such the proof is relegated to \Cref{thm:CP_sol_exist} in \Cref{sec:hausdorff}.

\subsection{The Discrete Problem}

In what follows, we will consider a way of approximating optimal contours. To achieve this, we will need to employ numerical methods, which necessitate discretisation. Therefore, we also consider valid environmental contours with respect to some (usually finite) sub-collection of unit vectors $ \SSS\subseteq S^{N-1} $.

We will say that $ \partial\B $ is $ (\SSS,C) $\textit{-valid} if $ \B $ is convex, compact, and for all $ u\in\SSS $ we have $ B(\B,u)\geq C(u) $.

If we take any $ \SSS=\{u_i\}_{i=1}^m $ and a $ (\SSS,C) $-valid contour $ \partial\B $, then for every $ u_i\in\SSS $ there must be some $ p_i\in \B$ such that $ \langle p_i,u_i\rangle=B(\B,u_i) $, which implies
\begin{equation}\label{lp1}
	\langle p_i,u_i\rangle\geq C(u_i) \text{ for }  i=1,2,\dots,m.
\end{equation}
Furthermore, since for all $ i $ we have $ p_i\in\B $, we must also have 
\begin{equation}\label{lp2}
\langle p_i,u_j\rangle\leq B(\B,u_j)  \text{ for }  i,j=1,2,\dots,m.
\end{equation}
Conversely, assume we have a set of points $ \{p_i\}_{i=1}^m $ satisfying \eqref{lp1} and \eqref{lp2} for some convex and compact set $ \B $. By \eqref{lp2} we know that $ \{p_i\}_{i=1}^m\subset\B $, which implies by \eqref{lp1} that $ B(\B,u_i)\geq C(u_i) $ for all $ i $, which means that $ \partial\B $ is $ (\SSS,C) $-valid.

Consequently, there is a correspondence between $ (\SSS,C) $-valid contours and sets of points, $ \{p_i\}_{i=1}^m $. Hence, we consider the following linear programming problem. Note that in order to approximate optimisation in mean width, we will refer to this as the linear program based on $(\SSS,W)$ where $W=(w_i)_{i=1}^m$ is a set of weights such that $\{(u_i,w_i)\}_{i=1}^m$ forms an $\epsilon$-accurate quadrature for some $\epsilon\in\R_+$.
\begin{align}\label{lpprob}
	\text{minimise }   \quad\quad\quad & \sum_{i=1}^m w_iB_i& \\
	\text{subject to } \quad\quad\quad & \langle p_i,u_i\rangle\geq C(u_i) 
	& i=1,2,\dots,m \nonumber\\
	& \langle p_i,u_j\rangle\leq B_j    & i,j=1,2,\dots,m \nonumber\\
	&  p_i \in \R^N    & i=1,2,\dots,m \nonumber\\
    &  u_i \in S^{N-1}    & i=1,2,\dots,m \nonumber\\
	&  C(u_i),B_i \in \R    & i=1,2,\dots,m \nonumber
\end{align}

We then note two facts about this problem. Firstly, the values $p_i = \|C\|_\infty u_i$, $ B_i = \|C\|_\infty $ for all $ i $ with $\|C\|_\infty = \max_{u\in S^{N-1}}|C(u)|$, satisfies the constraints and provides a feasible solution. Secondly, since $ \sum_{i=1}^m w_i B_i \geq \sum_{i=1}^m w_i C(u_i) > -\infty $, the objective function is bounded. Combining these facts we know that the problem must have at least one optimal solution, which yields a minimal $(\SSS,\CC)$-valid contour by the either of the following two constructions.

\begin{prop}\label{prop:Bstar}
	Consider the linear programming problem \eqref{lpprob}, with an optimal solution $ \left(\left(p^*_i\right)_{i=1}^m , \, \left(B^*_i\right)_{i=1}^m \right)$. If $\convh(\cdot)$ denotes the convex hull we have that
	\begin{equation*}
	\B^*=  \convh\left(\left\{p^*_i\right\}_{i=1}^m\right),
	\end{equation*}
	defines a $(\SSS,C)$-valid contour with $ B(\B^*,u_i)=B_i $ for all $ i $.
	
	\begin{proof}
		Firstly, we note that every $ b\in\B^* $ is a convex combination of the $p^*_i$s. This means $ b $ has the representation $ b=\sum_{i=1}^m a_i(b)p_i $ where $ \sum_{i=1}^m a_i(b) = 1,\, a_i\geq 0  $, which implies
		\begin{align*}
			B(\B^*,u_i)
			&=    \max_{b\in\B^*}\langle b,u_i\rangle\\
			&=    \max_{b\in\B^*}\sum_{k=1}^m a_k(b)\langle p_k,u_i\rangle\\
			&\leq \max_j\langle p_j,u_i\rangle\max_{b\in\B^*}\sum_{k=1}^m a_k(b)\\
			&=    \max_j\langle p_j,u_i\rangle.
		\end{align*}
		Conversely, since $ \left\{p^*_i\right\}_{i=1}^m\subseteq \B^*$ we have $\max_j\langle p_j,u_i\rangle\leq B(\B^*,u_i) $ which implies $  B(\B^*,u_i) =\max_j\langle p_j,u_i\rangle$.
		
		As a consequence we get
		$$ B(\B^*,u_i)\geq\langle p_i,u_i\rangle\geq C(u_i). $$
		As the convex hull of a finite number of points, $\B^*$, is compact and convex. These facts make $\partial\B^*$ a $(\SSS,C)$-valid contour.
	\end{proof}
\end{prop}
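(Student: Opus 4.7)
The plan is to verify in turn the three ingredients of the claim: convexity and compactness of $\B^*$, the identity $B(\B^*,u_i)=B_i^*$ for each $i$, and the validity inequality $B(\B^*,u_i)\geq C(u_i)$. Convexity and compactness are immediate, since $\B^*$ is the image of the standard simplex $\{a\in[0,1]^m : \sum_j a_j=1\}$ under the continuous map $(a_1,\dots,a_m)\mapsto \sum_j a_j p_j^*$, and continuous images of compact convex sets are compact and convex.

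The key computation is the support function of $\B^*$. Any $b\in\B^*$ has the form $b=\sum_j a_j p_j^*$ with $a_j\geq 0$ and $\sum_j a_j=1$, so $\langle b,u_i\rangle=\sum_j a_j\langle p_j^*,u_i\rangle$ is a convex combination of the values $\langle p_j^*,u_i\rangle$ and is therefore bounded above by $\max_j\langle p_j^*,u_i\rangle$. Since each $p_j^*$ itself lies in $\B^*$, this bound is attained, giving $B(\B^*,u_i)=\max_j\langle p_j^*,u_i\rangle$. The LP constraint $\langle p_j^*,u_i\rangle\leq B_i^*$ then yields $B(\B^*,u_i)\leq B_i^*$.

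Validity now follows at once: the constraint $\langle p_i^*,u_i\rangle\geq C(u_i)$ combined with $p_i^*\in\B^*$ gives
\[
B(\B^*,u_i)\;\geq\;\langle p_i^*,u_i\rangle\;\geq\; C(u_i),
\]
as required. The complementary inequality $B_i^*\leq B(\B^*,u_i)$, which upgrades the identity to equality, will be extracted from optimality of the LP solution: if $B_i^*>\max_j\langle p_j^*,u_i\rangle$ for some index $i$ with positive weight $w_i$, one could reduce $B_i^*$ to $\max_j\langle p_j^*,u_i\rangle$ while preserving all LP constraints and strictly decreasing the objective, contradicting optimality.

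The main obstacle I anticipate is the clean handling of this last optimality step. For indices with $w_i=0$ the objective is indifferent to $B_i^*$, so the equality $B_i^*=B(\B^*,u_i)$ need not hold at every optimum, but one may freely redefine such $B_i^*$ to coincide with the support function without affecting feasibility or the objective value; this cosmetic adjustment is the only subtlety, and the rest of the argument is a direct translation between the LP constraints and the defining properties of $B(\cdot,\cdot)$.
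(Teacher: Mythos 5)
Your proposal is correct and, interestingly, more careful than the paper's own proof. The paper's argument establishes exactly the pieces you flag as immediate: $\B^*$ is compact and convex (being a convex hull of finitely many points), $B(\B^*,u_i)=\max_j\langle p^*_j,u_i\rangle$ by the convex-combination computation, and hence $B(\B^*,u_i)\geq\langle p^*_i,u_i\rangle\geq C(u_i)$, giving $(\SSS,C)$-validity. Your treatment of those steps is essentially identical to the paper's.

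Where you genuinely go beyond the paper is the stated equality $B(\B^*,u_i)=B^*_i$: the paper asserts this in the proposition but its proof never addresses it, stopping at $B(\B^*,u_i)=\max_j\langle p^*_j,u_i\rangle$. You correctly observe that the LP constraints $\langle p^*_j,u_i\rangle\leq B^*_i$ give $B(\B^*,u_i)\leq B^*_i$, and that the reverse inequality requires an optimality argument: for indices $i$ with $w_i>0$, reducing $B^*_i$ to $\max_j\langle p^*_j,u_i\rangle$ strictly decreases the objective, so optimality forces equality. Your observation that this breaks down for $w_i=0$ is sharp --- the proposition as stated is not quite true for degenerate optima at zero-weight indices unless one adopts the cosmetic redefinition you describe (or restricts to strictly positive weights). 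The paper silently passes over this, even though the equality $B(\B^{DP},u_i)=B^*_i$ is later invoked in the proof of Theorem~\ref{thm:errorbound_optval}; your more explicit argument is what is actually needed to justify that step.
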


\begin{cor}\label{cor:Bprime}
	Consider the linear programming problem \eqref{lpprob}, with an optimal solution $ \left(p_i\right)_{i=1}^m , \, \left(B_i\right)_{i=1}^m $. We then have that
	\begin{equation*}
		\B'=  \bigcap_{i=1}^m\Pi^-(u_i,B_i),
	\end{equation*}
	defines a $(\SSS,C)$-valid contour with $B(\B',u_i) = B_i$ for all $i$.
	
	\begin{proof}
		We first note that since $ \langle p_i,u_j\rangle\leq B_j $ for all $ i,j$, we must have $ p_i \in \B'$ for all $ i $. This implies that when $ \B^* $ is as defined in \Cref{prop:Bstar}, we have $ \B^*\subseteq\B' $ since $\B'$ is convex. This immediately implies that $ \partial\B' $ is $ (\SSS,C) $-valid.
	\end{proof}
\end{cor}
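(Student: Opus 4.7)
The plan is to verify that the set $\B' = \bigcap_{i=1}^m \Pi^-(u_i, B_i)$ satisfies the three requirements of a $(\SSS,C)$-valid contour (convex, compact, with $B(\B', u_i) \geq C(u_i)$ for all $i$), and additionally the stronger equality $B(\B', u_i) = B_i$.

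The essential step is to connect $\B'$ to the polytope $\B^* = \convh(\{p_i^*\}_{i=1}^m)$ from \Cref{prop:Bstar}. The LP constraint $\langle p_i, u_j\rangle \le B_j$ for all $i,j$ says precisely that each $p_i$ lies in $\Pi^-(u_j, B_j)$ for every $j$, hence $p_i \in \B'$ for every $i$. Since $\B'$ is convex (being an intersection of half-spaces), it contains the convex hull of these points, giving the inclusion $\B^* \subseteq \B'$.

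From this inclusion I would deduce the outreach equality by a two-sided estimate. On one hand, $\B' \subseteq \Pi^-(u_i, B_i)$ gives $B(\B', u_i) \le B_i$. On the other hand, $\B^* \subseteq \B'$ together with \Cref{prop:Bstar} gives $B(\B', u_i) \ge B(\B^*, u_i) = B_i$. Hence $B(\B', u_i) = B_i$, and this implies the validity inequality $B(\B', u_i) = B_i \ge \langle p_i, u_i\rangle \ge C(u_i)$ by the LP constraint~\eqref{lp1}. Convexity of $\B'$ is automatic.

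The main obstacle is compactness: $\B'$ is closed as an intersection of closed half-spaces, but a finite intersection of half-spaces need not be bounded. The argument relies on $\SSS$ covering $S^{N-1}$ densely enough that the directions $\{u_i\}_{i=1}^m$ positively span $\R^N$, i.e.\ $0 \in \mathrm{int}(\convh(\SSS))$; this will be guaranteed once $\disp(\SSS)$ is small enough, as is the setting throughout the paper. Under such a condition, any ray in $\B'$ would violate one of the constraints $\langle \cdot, u_i\rangle \le B_i$, so $\B'$ is bounded and therefore compact.
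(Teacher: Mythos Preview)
Your proof is correct and follows the same approach as the paper: use the LP constraints $\langle p_i,u_j\rangle\le B_j$ to place each $p_i$ in $\B'$, deduce $\B^*\subseteq\B'$ by convexity, and then inherit validity and the outreach equality from \Cref{prop:Bstar}. You are in fact more careful than the paper's own argument, which neither spells out the two-sided estimate for $B(\B',u_i)=B_i$ nor addresses compactness of $\B'$; your observation that boundedness requires the directions $\{u_i\}$ to positively span $\R^N$ (which holds under the dispersion hypotheses imposed later in the paper) is a genuine point the paper's proof glosses over.
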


Similarly to \Cref{def:cpdef}, we consider the analogous concepts for this discrete problem, which will aid our comparison between the continuous problem \eqref{cpprob} and our discrete approximation \eqref{lpprob}.

\begin{defi}\label{dpdef}
    We denote the set of \textit{valid} solutions to the discrete problem \eqref{lpprob} based on $ (\SSS,W) $ by
    $$\DD^\infty(\SSS,W) = \left\{ \B\subset\R^N \mid \B \text{ convex and compact, } B(\B,u)\geq C(u) \text{ for all } u\in\SSS \right\}.$$
    The optimal value of the objective function is denoted by
    $$V^{DP}(\SSS,W) = \min \left\{ \sum_{i=1}^m w_iB(\B,u_i) ,\, \B\in\DD^\infty\right\}.$$
    Lastly, we define the $\gamma$-near optimal solution space by
    $$\DD^\gamma(\SSS,W) = \left\{ \B\in\CC^\infty \Big| \sum_{i=1}^m w_iB(\B,u_i) - V^{DP} \leq \gamma\right\}.$$
    We will usually omit the dependence on $(\SSS,W)$ whenever the meaning is clear or otherwise superfluous.
\end{defi}

\begin{rem}\label{rem:BstarinBprime}
  If we consider any $\B\in\DD^\infty$ then we know from previous arguments that there exists a feasible solution $\left(\left(p_i\right)_{i=1}^m , \, \left(B_i\right)_{i=1}^m\right)$ with $\left\{p_i\right\}_{i=1}^m\subset\B$ and $B_i=B(\B,u_i)$. Consequently, if $B^*$ and $B'$ are defined as in \Cref{prop:Bstar} and \Cref{cor:Bprime}, we see that $\B^*\subseteq\B\subseteq\B'$, which means that the constructions of $\B^*$ and $\B'$ provide lower and upper bounds on all sets in $\DD^\infty$.
\end{rem}

\subsection{Convergence and Near-Optimality}

With these definitions established, we can more accurately state the goal of this chapter. We first aim to show how one can construct $(S^{N-1},C)$-valid contours from any $\B\in\DD^\gamma$ for some $\gamma\geq0$. Furthermore, we will prove that the optimal value of the discrete problem, $V^{DP}$, can arbitrarily well approximate $V^{CP}$. Using this, we get explicit upper bounds on the near-optimality of the constructed $(S^{N-1},C)$-valid contours.

In order to control the near-optimality of solutions to our discrete problems, we need to consider two issues. The first problem we will tackle is the fact that a $(\SSS,C)$-valid contour is not necessarily $(S^{N-1},C)$-valid. To amend this, we will consider a method for inflating contours to ensure their validity. Secondly, we will need to correct for the fact that \eqref{lpprob} optimises for an approximation of mean width, this can be handled by explicitly including the numerical error from discrete integration of the mean width.

To construct valid contours from our discrete approximation, we will first need a bound on how much a contour $\partial\B $ with $ \B\in\DD^\infty$ can violate the constraint of $B(\B,\cdot)\geq C(\cdot)$.

\begin{lemm}\label{lemm:errorbound_geqC}
  Fix some $\SSS\subset S^{N-1}$ with $\delta = \disp(\SSS)$ and let $\B \in \DD^\infty$. Then for all $u \in S^{N-1}$, we have
    $$C(u)-\delta(L_C+R) \leq B(\B,u),$$
    where $L_C$ is the Lipschitz constant of $C$, and $R=\max_{p\in\B} \|p\|$.
    
    \begin{proof}
        Consider any $u\in S^{N-1} $ and pick some $v\in\SSS$ such that $\|u-v\|\leq\delta$. This immediately yields by the Lipschitz continuity of $C$ that $ |C(u)-C(v)|\leq \delta L_C $. We then choose some $p_v\in\partial\B$ such that $\langle p_v,v\rangle=B(\B,v)\geq C(v)$ and get
        \begin{align*}
            B(\B,u)
            &\geq \langle p_v,u\rangle\\
            & =   \langle p_v,v\rangle + \langle p_v,u-v\rangle\\
            &\geq C(v)-\delta \|p_v\| \\
            &\geq  C(u)-\delta L_C - \delta R \\
            &= C(u) - \delta (L_C + R) .
        \end{align*}
    \end{proof}
\end{lemm}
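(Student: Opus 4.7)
The plan is to turn the discrete outreach guarantee at points of $\SSS$ into a continuous outreach guarantee on all of $S^{N-1}$, at the cost of a small additive slack controlled by the dispersion $\delta$. The natural approach is a three-term triangle-inequality style estimate: we replace the target direction $u$ by a nearby sampled direction $v \in \SSS$, pay a Lipschitz price $\delta L_C$ on the $C$ side, pay an inner-product price $\delta R$ for tilting the witness point from $v$ to $u$, and then invoke the validity of $\B$ at $v$.

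Concretely, I would fix an arbitrary $u \in S^{N-1}$ and use the definition of $\disp(\SSS)$ to produce some $v \in \SSS$ with $\|u-v\| \le \delta$. Lipschitz continuity of $C$ then gives $C(v) \ge C(u) - \delta L_C$. Since $\B \in \DD^\infty$, the constraint $B(\B,v) \ge C(v)$ holds, and by compactness of $\B$ the supremum in the definition of the outreach function is attained, so we can pick $p_v \in \B$ with $\langle p_v, v\rangle = B(\B,v) \ge C(v)$.

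Next I would write the outreach at $u$ as
\begin{equation*}
B(\B,u) \;\ge\; \langle p_v, u\rangle \;=\; \langle p_v, v\rangle + \langle p_v, u - v\rangle,
\end{equation*}
and bound the error term by Cauchy--Schwarz: $\langle p_v, u-v\rangle \ge -\|p_v\|\,\|u-v\| \ge -R\delta$, using $\|p_v\| \le R$ from the definition of $R$. Assembling everything yields $B(\B,u) \ge C(v) - R\delta \ge C(u) - \delta L_C - R\delta = C(u) - \delta(L_C + R)$, which is exactly the claim. There is no real obstacle here beyond being careful that $\B$ is non-empty and compact (so the supremum is attained and $R$ is finite), which is part of the definition of $\DD^\infty$; the argument is essentially a clean application of the dispersion bound plus Lipschitz control, and it is independent of $u$, so it holds uniformly on $S^{N-1}$.
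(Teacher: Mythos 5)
Your proof is correct and follows the same approach as the paper: pick a nearby sampled direction $v\in\SSS$ with $\|u-v\|\le\delta$, use Lipschitz continuity of $C$, take a witness point $p_v$ attaining $B(\B,v)$, and split $\langle p_v,u\rangle$ into $\langle p_v,v\rangle+\langle p_v,u-v\rangle$ with Cauchy--Schwarz on the second term. The only cosmetic difference is that the paper takes $p_v\in\partial\B$ rather than $p_v\in\B$, which is immaterial to the argument.
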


With this result, we can quantify how far our $(\SSS,C)$-valid contours are from being $(S^{N-1},C)$-valid. The main idea in constructing $(S^{N-1},C)$-valid contours is to use the bound of \Cref{lemm:errorbound_geqC}, and then use following result to inflate the contour.

\begin{lemm}\label{lem:contour_extension}
    Assume we have a subset $\SSS\subseteq S^{N-1} $ and a convex and compact set  $ \B $. If we then define 
    $$\B^e=\bigcap_{ v \in \SSS } \Pi^-\left( v,B(\B,v) + e \right),$$
    we have that $B(\B^e,u)\geq B(\B,u)+e$ for all $u\in S^{N-1}$, and $B(\B^e,v)= B(\B,v)+e$ for all $v\in \SSS$.
    \begin{proof}
        For any $u\in S^{N-1}$ there exists a $ b(u)\in  \B$ such that $\langle  b(u),u \rangle = B(\B,u)$, we then have immediately that $\langle  b(u)+eu ,u\rangle = B(\B,u)+e$. Furthermore, since $ b(u)\in\B$, we have for every $v\in\SSS,\,u\in S^{N-1}$ that $\langle  b(u),v \rangle \leq B(\B,v)$ which further yields
        $$\langle  b(u)+eu,v \rangle \leq B(\B,v) + e\langle u, v \rangle\leq B(\B,v) + e.$$
        As a consequence, we must have $b(u)+eu\in\B^e$, which implies $B(\B^e,u)\geq B(\B,u)+e$ for all $u\in S^{N-1}$. Lastly, by definition of $\B^e$, we have $B(\B^e,v)\leq B(\B,v)+e$ for any $v\in\SSS$, which coupled with the previous inequality implies that $B(\B^e,v) = B(\B,v)+e$ for all $v\in\SSS$.
    \end{proof}
\end{lemm}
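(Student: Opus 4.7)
The plan is to establish the two claims separately, with the first doing most of the work. For the lower bound $B(\B^e, u) \geq B(\B, u) + e$, the natural strategy is to exhibit, for each $u \in S^{N-1}$, an explicit point in $\B^e$ whose inner product with $u$ equals $B(\B,u)+e$. The obvious candidate is obtained by taking a point $b(u)\in\B$ which achieves the outreach, i.e.\ $\langle b(u),u\rangle = B(\B,u)$ (such a point exists because $\B$ is compact and convex), and then translating it by $eu$.

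I would then verify that $b(u)+eu\in\B^e$. This amounts to checking, for every $v\in\SSS$, the inequality
\begin{equation*}
\langle b(u)+eu,v\rangle \;=\; \langle b(u),v\rangle + e\langle u,v\rangle \;\leq\; B(\B,v)+e.
\end{equation*}
The first term is bounded by $B(\B,v)$ because $b(u)\in\B$, and the second term is at most $e$ by Cauchy--Schwarz applied to two unit vectors (using $e\geq 0$, which is implicit in the intended application). Hence $b(u)+eu\in\B^e$, which immediately yields
\begin{equation*}
B(\B^e,u) \;\geq\; \langle b(u)+eu,u\rangle \;=\; B(\B,u)+e.
\end{equation*}

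For the second claim, the upper bound $B(\B^e,v)\leq B(\B,v)+e$ for $v\in\SSS$ is essentially free from the definition: every point $p\in\B^e$ lies in $\Pi^-(v,B(\B,v)+e)$, so $\langle p,v\rangle \leq B(\B,v)+e$, and taking the supremum gives the bound. Combining this with the already established lower bound yields equality.

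The only subtle point I anticipate is the implicit positivity assumption $e\geq 0$, which is needed so that the Cauchy--Schwarz step $e\langle u,v\rangle \leq e$ holds. Beyond that, the proof is a short computation; the key idea is simply that translating an extremal point of $\B$ by the radial vector $eu$ still respects all the inflated half-space constraints, because the radial component along any other direction $v$ contributes at most $e$.
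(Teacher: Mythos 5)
Your proposal is correct and follows essentially the same argument as the paper: translate an outreach-achieving point $b(u)$ of $\B$ by $eu$, verify it satisfies all inflated half-space constraints via $\langle u,v\rangle\leq 1$, and combine with the trivial upper bound from the definition of $\B^e$. Your explicit remark about the implicit requirement $e\geq 0$ is a fair observation but does not change the substance.
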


These results imply that for any $\gamma\geq0,\,\B\in\DD^\gamma$, we may inflate the contour in order to guarantee that the resulting $\B^e$ provides a $(S^{N-1},C)$-valid contour. Note that this construction depends on $\max_{p\in\B}\|p\|$. In order to extend these results and guarantee universal bounds on the necessary inflation, we will need the following results which limit the size of $\max_{p\in\B}\|p\|$ for all $\B$ in $\DD^\gamma$. However, to achieve this bound we will first need a small technical computation.

\begin{lemm}\label{lem:Id}
  For any $v \in S^{N-1}$, we have
  $$\int_{S^{N-1}}\langle v,u\rangle^+ d\sigma(u) \ge \frac{1}{3\sqrt{N}},$$
  where $(\cdot)^+$ denotes $\max(\cdot,0)$.
\end{lemm}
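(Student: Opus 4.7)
The plan is to reduce the integral to an explicit constant via the standard Gaussian representation of the uniform measure on the sphere. By rotational invariance of $\sigma$, the integrand $\langle v,u\rangle^+$ has an integral that depends only on $\|v\|$, so without loss of generality I will take $v=e_1$ and aim to show $\int_{S^{N-1}} u_1^+\, d\sigma(u) \ge \tfrac{1}{3\sqrt{N}}$.

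Next, I will introduce a standard Gaussian vector $X = (X_1,\dots,X_N) \sim N(0,I_N)$ and use the well-known fact that $Z := X/\|X\|$ is uniformly distributed on $S^{N-1}$ and is \emph{independent} of $\|X\|$. Writing $X_1 = \|X\|\, Z_1$ and taking the positive part gives $X_1^+ = \|X\|\, Z_1^+$. By independence,
\begin{equation*}
\E[X_1^+] = \E[\|X\|]\cdot \E[Z_1^+],
\end{equation*}
so that
\begin{equation*}
\int_{S^{N-1}} u_1^+\, d\sigma(u) \;=\; \E[Z_1^+] \;=\; \frac{\E[X_1^+]}{\E[\|X\|]}.
\end{equation*}

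The remaining step is to bound numerator and denominator. The numerator is the classical half-Gaussian first moment: a direct computation (or consulting a table) gives $\E[X_1^+] = \tfrac{1}{\sqrt{2\pi}}$. For the denominator, Jensen's inequality (applied to the concave function $\sqrt{\cdot}$, or equivalently Cauchy--Schwarz) yields $\E[\|X\|] \le \sqrt{\E[\|X\|^2]} = \sqrt{N}$. Combining these,
\begin{equation*}
\int_{S^{N-1}} u_1^+\, d\sigma(u) \;\ge\; \frac{1}{\sqrt{2\pi N}}.
\end{equation*}

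Finally, since $\sqrt{2\pi} < 3$, the right-hand side exceeds $\tfrac{1}{3\sqrt{N}}$, which gives the claim. There is no real obstacle here — the only thing to be careful about is correctly invoking the independence of $Z$ and $\|X\|$ (which is the one non-elementary ingredient) and verifying the numerical inequality $\sqrt{2\pi} \le 3$ at the end.
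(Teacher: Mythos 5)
Your proof is correct and takes essentially the same approach as the paper: both pass to a standard Gaussian via the independence of direction and norm, identify the numerator with a one-dimensional Gaussian moment, and bound $\E\|X\|\le\sqrt N$ by Jensen/Cauchy--Schwarz. The only cosmetic difference is that the paper first rewrites $\langle v,u\rangle^+$ as $\tfrac12|\langle v,u\rangle|$ and works with $\E|X_1|=\sqrt{2/\pi}$, whereas you work directly with $\E[X_1^+]=1/\sqrt{2\pi}$; the resulting bound $1/\sqrt{2\pi N}>1/(3\sqrt N)$ is the same.
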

\begin{proof}
  We rewrite as an expectation of absolute values as follows
  \begin{align*}
    \int_{S^{N-1}}\langle v,u\rangle^+ d\sigma(u) &= \frac{1}{2}\mathbb{E}_{u \in \mathrm{Unif}(S^{N-1})}\left(|\langle v,u\rangle|\right)\\
                                                  &= \frac{\mathbb{E}_{z \in \mathcal{N}(0,I_N)}\norm{z}\, \mathbb{E}_{u \in \mathrm{Unif}(S^{N-1})}\left(|\langle v,u\rangle|\right)}{2\mathbb{E}_{z \in \mathcal{N}(0,I_N)}\norm{z}}.
  \end{align*}
  Noting that the multivariate normal distribution has uniformly random direction, we may calculate the numerator as
  \begin{align*}
    &\mathbb{E}_{z \in \mathcal{N}(0,I_N)}\norm{z}\, \mathbb{E}_{u \in \mathrm{Unif}(S^{N-1})}\left(|\langle v,u\rangle|\right) = \\
    &\mathbb{E}_{z \in \mathcal{N}(0,I_N)}\left(|\langle v,z\rangle|\right) = \mathbb{E}_{z_1 \in \mathcal{N}(0,1)}|z_1| = \sqrt\frac{2}{\pi}.
  \end{align*}
  The denominator can be bounded by $2\mathbb{E}_{z \in \mathcal{N}(0,I_N)}\norm{z} \le 2\sqrt{\mathbb{E}_{z \in \mathcal{N}(0,I_N)}\norm{z}^2} = 2\sqrt N$.
  In total, we get
  \begin{align*}
    \int_{S^{N-1}}\langle v,u\rangle^+ d\sigma(u) &\ge \frac{\sqrt\frac{2}{\pi}}{2\sqrt N} > \frac{1}{3\sqrt N}.
  \end{align*}
\end{proof}

\begin{lemm}\label{lem:meanwidth>diam}
    Let $\B$ be a compact set satisfying $\max_{p,q\in\B}\|p-q\|\geq K $ for some $K\in\R^+$. We then have that the halved mean width of $\B$ satisfies
    $$
    \int_{S^{N-1}}B(\B,u)d\sigma(u)
    \geq
    {\frac{K}{3\sqrt N}}.
    $$

    \begin{proof}
      We start by picking {$p_0, p \in \B$} such that $\|p-p_0\| \geq K$ and note that 
        $$B(\B,u) \geq \max(\langle p,u\rangle,\langle p_0,u\rangle)
        =\langle p-p_0,u\rangle^+ +\langle p_0,u\rangle.$$
        If we then note that $\int_{S^{N-1}}\langle p_0,u\rangle d\sigma(u) = 0$, we can define $v=\frac{p-p_0}{|p-p_0|}$ to get
        $$
        \int_{S^{N-1}}B(\B,u)d\sigma(u)
        \geq \int_{S^{N-1}} \left(\langle p-p_0,u\rangle^+ +\langle p_0,u\rangle\right) d\sigma(u)
        = K\int_{S^{N-1}}\langle v,u\rangle^+ d\sigma(u).
        $$
        {By \Cref{lem:Id} this implies the desired result.}
    \end{proof}
\end{lemm}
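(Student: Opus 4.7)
My plan is to exploit the fact that the support function $B(\mathcal{B},\cdot)$ of $\mathcal{B}$ dominates $\langle p,\cdot\rangle$ for every $p\in\mathcal{B}$, and in particular dominates the pointwise maximum of two such linear functionals. Since $\mathcal{B}$ is compact, I can realize the diameter: choose $p_0,p\in\mathcal{B}$ with $\|p-p_0\|\ge K$. Then for every direction $u\in S^{N-1}$,
\[
B(\mathcal{B},u)\ \ge\ \max\bigl(\langle p,u\rangle,\langle p_0,u\rangle\bigr)\ =\ \langle p-p_0,u\rangle^{+}+\langle p_0,u\rangle,
\]
using the identity $\max(a,b)=(a-b)^{+}+b$.

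Next I would integrate this pointwise bound against $\sigma$. The second term drops out for free, because $\int_{S^{N-1}}\langle p_0,u\rangle\,d\sigma(u)=0$ by the reflection symmetry $u\mapsto -u$ of $\sigma$. This leaves
\[
\int_{S^{N-1}}B(\mathcal{B},u)\,d\sigma(u)\ \ge\ \int_{S^{N-1}}\langle p-p_0,u\rangle^{+}\,d\sigma(u).
\]
Since $(\cdot)^{+}$ is positively homogeneous, I can factor out $\|p-p_0\|\ge K$ by setting $v=(p-p_0)/\|p-p_0\|\in S^{N-1}$, obtaining the clean lower bound $\|p-p_0\|\int_{S^{N-1}}\langle v,u\rangle^{+}\,d\sigma(u)\ge K\int_{S^{N-1}}\langle v,u\rangle^{+}\,d\sigma(u)$.

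At this point the work is essentially done: \Cref{lem:Id} supplies $\int_{S^{N-1}}\langle v,u\rangle^{+}\,d\sigma(u)\ge \frac{1}{3\sqrt{N}}$, giving exactly $K/(3\sqrt N)$. The only potentially subtle step is the attainment of the diameter, but since $\mathcal{B}$ is compact the supremum $\max_{p,q\in\mathcal{B}}\|p-q\|$ is attained, so I can actually pick $p_0,p$; alternatively, if one only assumes the supremum is $\ge K$, a standard approximation argument works since the final inequality is invariant under taking limits in $K$. No real obstacle arises; the proof is essentially a symmetrization argument plus the bookkeeping of \Cref{lem:Id}.
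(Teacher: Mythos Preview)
Your proposal is correct and follows essentially the same argument as the paper: pick two points realizing distance at least $K$, bound $B(\mathcal{B},u)$ below by $\langle p-p_0,u\rangle^{+}+\langle p_0,u\rangle$, use symmetry to kill the linear term, factor out $\|p-p_0\|\ge K$, and invoke \Cref{lem:Id}. Your discussion of attainment of the diameter and the explicit use of positive homogeneity are slightly more careful than the paper's presentation, but the route is identical.
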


With these results, we can now universally bound $\max_{p\in\B}\|p\|$ for all $\B$ in $\DD^\gamma$, which will also provide a universal bound on the amount of inflation needed in \Cref{lem:contour_extension} to yield a $(S^{N-1},C)$-valid contour.

\begin{lemm}\label{lem:Bmax<CmaxK}
    Assume that $ \{(u_i,w_i)\}_{i=1}^m\subset \SSS\times [0,\infty) $ is an $\epsilon$-accurate quadrature with $\delta \coloneqq \disp(\SSS)$ satisfying $\epsilon,\delta \le \frac{1}{10\sqrt N}$. Furthermore, let $\B\in\DD^\gamma$, then
    $$
    \max_{p\in\B}\|p\| \leq 12\sqrt N(\|C\|_\infty+\gamma).
    $$

    \begin{proof}
      First, we pick some $p_{max}\in\partial\B$ such that $\|p_{max}\|=\max_{p\in\B}\|p\|$. If $\|p_{max}\|=0$ then the desired bound holds trivially. As such we assume, without loss of generality, that $\|p_{max}\|>0$ and define $w={p_{max}}/{\|p_{max}\|}$. We then consider some $ v\in\SSS $ such that $\|(-w)-v\|\leq\delta$ and note that there must be some $p_0\in\partial\B$ such that $\langle p_0,v\rangle=B(\B,v)\geq C(v) \ge -\|C\|_\infty$. This means that 
        \begin{align*}
            \langle p_0,w\rangle
            &=
            \langle p_0,w+v\rangle-\langle p_0,v\rangle\\
            &\leq 
            \delta\|p_0\|+\|C\|_\infty\\
            &\leq
            \delta\|p_{max}\|+\|C\|_\infty,
        \end{align*}
        which further yields
        $$ 
        \|p_{max}-p_0\|
        \geq
        \langle p_{max}-p_0,w\rangle
        \geq
        \|p_{max}\|(1-\delta)-\|C\|_\infty.
        $$
        This means that 
        $$\max_{p,q\in\B}\|p-q\|\geq \|p_{max}\|(1-\delta)-\|C\|_\infty,$$
        which, by \Cref{lem:meanwidth>diam}, gives
        $$
        \int_{S^{N-1}}B(\B,u)d\sigma(u)
        \geq
        \frac{\|p_{max}\|(1-\delta)-\|C\|_\infty}{3\sqrt N}.
        $$
        On the other hand, since $ p_i=\|C\|_\infty u_i $, $ B_i = \|C\|_\infty $ for all $ i $ is a feasible solution of the linear program, we must also have
        %
        $$\sum_{i=1}^m w_i B(\B,u_i) 
            \leq V^{DP}+\gamma
            \leq \sum_{i=1}^m w_i C(u_i) + \gamma 
            \leq (1+\epsilon) \|C\|_\infty + \gamma.$$
        Finally, by \Cref{prop:B_lipz} and the definition of an $\epsilon$-accurate quadrature, we have that
        $$\left|\int_{S^{N-1}}B(\B,u)d\sigma(u)-\sum_{i=1}^m w_i B(\B,u_i) \right| \leq 2\epsilon\|p_{max}\|.$$
        Putting these facts together, we get
        \begin{align*}
            (1+\epsilon)\|C\|_\infty+\gamma
            &\geq \sum_{i=1}^m w_i B(\B,u_i)\\
            &\geq \int_{S^{N-1}}B(\B,u)d\sigma(u) - 2\epsilon\|p_{max}\|\\
            &\geq \frac{\|p_{max}\|(1-\delta)-\|C\|_\infty}{3\sqrt N} - 2\epsilon\|p_{max}\|.
        \end{align*}
        Using the assumptions $\epsilon,\delta \le \frac{1}{10\sqrt N}$ and $N \ge 2$, we get
        $$
        \max_{p\in\B}\|p\| \leq \frac{3\sqrt N((1+\epsilon)\|C\|_\infty+\gamma)+\|C\|_\infty}{1-\delta-6\sqrt N\epsilon} \le 12\sqrt N(\|C\|_\infty+\gamma).
        $$
    \end{proof}
\end{lemm}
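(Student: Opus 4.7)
The plan is to bound $R \coloneqq \max_{p\in\B}\|p\|$ by sandwiching the halved mean width $\int_{S^{N-1}} B(\B,u)\,d\sigma(u)$ between a lower bound roughly linear in $R$ and an upper bound of the form (a constant depending on $\|C\|_\infty$ and $\gamma$) plus (a small coefficient times $R$). Once such a sandwich is established, solving the resulting affine inequality in $R$ will yield the claimed estimate, provided $\epsilon$ and $\delta$ are small enough that the net coefficient of $R$ on the correct side remains positive.

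For the lower bound, I would pick $p_{max}\in\B$ with $\|p_{max}\|=R$ (the case $R=0$ being trivial) and set $w = p_{max}/R$. To apply \Cref{lem:meanwidth>diam} I need a companion point in $\B$ that is far from $p_{max}$. Using $\disp(\SSS)\le\delta$, choose $v\in\SSS$ with $\|v-(-w)\|\le\delta$. By $(\SSS,C)$-validity of $\B$ there is some $p_0\in\B$ with $\langle p_0,v\rangle = B(\B,v)\ge C(v)\ge -\|C\|_\infty$. Decomposing $\langle p_0,w\rangle = \langle p_0,w+v\rangle - \langle p_0,v\rangle$ and using $\|p_0\|\le R$ gives $\langle p_0,w\rangle\le \delta R + \|C\|_\infty$, so $\|p_{max}-p_0\|\ge \langle p_{max}-p_0,w\rangle\ge R(1-\delta)-\|C\|_\infty$. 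Then \Cref{lem:meanwidth>diam} yields
\[\int_{S^{N-1}} B(\B,u)\,d\sigma(u) \ge \frac{R(1-\delta)-\|C\|_\infty}{3\sqrt N}.\]

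For the upper bound, note that $p_i=\|C\|_\infty u_i$, $B_i=\|C\|_\infty$ is LP-feasible, and applying the $\epsilon$-accurate quadrature to the constant function $1$ shows $\sum_i w_i \le 1+\epsilon$. Near-optimality of $\B$ therefore gives $\sum_i w_i B(\B,u_i) \le V^{DP}+\gamma \le (1+\epsilon)\|C\|_\infty+\gamma$. By \Cref{prop:B_lipz}, $B(\B,\cdot)$ is $R$-Lipschitz and bounded in absolute value by $R$, so $\epsilon$-accuracy bounds the discretisation error by $\epsilon(R+R)=2\epsilon R$. Combining with the lower bound produces the affine inequality
\[\frac{R(1-\delta)-\|C\|_\infty}{3\sqrt N} \le (1+\epsilon)\|C\|_\infty+\gamma+2\epsilon R,\]
and solving for $R$ while substituting $\epsilon,\delta\le 1/(10\sqrt N)$ and $N\ge 2$ should collapse to $R\le 12\sqrt N(\|C\|_\infty+\gamma)$. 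The main obstacle is purely book-keeping: one must verify that the hypothesised smallness of $\epsilon,\delta$ keeps $(1-\delta)/(3\sqrt N)-2\epsilon$ bounded strictly below on the left by a positive quantity, and that the resulting prefactor absorbs neatly into the clean constant $12\sqrt N$.
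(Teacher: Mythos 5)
Your proposal matches the paper's proof step for step: the same choice of $p_{max}$ and antipodal sample direction $v$, the same decomposition $\langle p_0,w\rangle=\langle p_0,w+v\rangle-\langle p_0,v\rangle$ to bound the diameter from below, the same use of \Cref{lem:meanwidth>diam} for the lower bound on mean width, the same feasible LP point to bound $V^{DP}$ from above, and the same $2\epsilon R$ quadrature error. The remaining arithmetic resolves exactly as you anticipate, so your outline is correct and essentially identical to the paper's argument.
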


Combining \Cref{lemm:errorbound_geqC,lem:Bmax<CmaxK,lem:contour_extension}, we can construct $(S^{N-1},C)$-valid contours from our discrete approximation. This allows us to directly compare our discrete solutions from \eqref{lpprob} to the theoretical ones of \eqref{cpprob}. However, we still need to address the error stemming from the numerical integration of $B(\B,\cdot)$. To handle this, we note that the integration error of an $\epsilon$-accurate quadrature involves $\max_{p\in\B}\|p\|$, which is bounded for $\B\in\DD^\gamma$ by \Cref{lem:Bmax<CmaxK}. To finish our preparations for the main theorem, we prove the analogous result for $\B\in\CC^\gamma$.

\begin{cor}\label{cor:Bmax<CmaxK_cont}
    Assume $\B \in \CC^\gamma$ for some $\gamma \geq 0$, we then have that 
    $$\max_{p\in\B}\|p\| \leq 4\sqrt{N}\left(\|C\|_\infty+\gamma\right), $$
  where $\|C\|_\infty = \max_{u\in S^{N-1}}|C(u)|$.
    \begin{proof}
      First, we pick some $p_{max} \in \B$ such that $\|p_{max}\| = \max_{p\in\B}\|p\|$.  If $\|p_{max}\|=0$ then the desired bound holds trivially. As such we assume, without loss of generality, that $\|p_{max}\|>0$ and define $v = p_{max}/\|p_{max}\|$. Since $\partial\B$ is $(S^{N-1},C)$-valid, there must also exist some $p_0 \in \partial\B$ such that $\langle p_0,-v\rangle \geq C(-v) \ge -\|C\|_\infty$, which implies 
        \begin{align*}
        \|p_{max}-p_0\|
        &\geq \langle p_{max}-p_0,v\rangle 
        = \langle p_{max},v\rangle+\langle p_0,-v\rangle 
        \geq \|p_{max}\|-\|C\|_\infty.
        \end{align*}
        As a consequence, we note that since $\max_{p,q\in\B}\|p-q\|\geq \|p_{max}\|-\|C\|_\infty$, \Cref{lem:meanwidth>diam} implies that
        $$\int_{S^{N-1}}B(\B,u)d\sigma(u)
        \geq
        \frac{\|p_{max}\|-\|C\|_\infty}{3\sqrt N}.$$

        Furthermore, we have that $\|C\|_\infty S^{N-1} \in \CC^\infty$ which means that $V^{CP}\leq \|C\|_\infty$ and therefore $\B \in \CC^\gamma$ implies $\int_{S^{N-1}} B(\B,u) d\sigma(u) \leq \|C\|_\infty+\gamma$.

        Combining these facts with $N \ge 1$ yields $$\|p_{max}\| \le 3\sqrt N(\|C\|_\infty+\gamma)+\|C\|_\infty \le 4\sqrt N(\|C\|_\infty+\gamma).$$
    \end{proof}
\end{cor}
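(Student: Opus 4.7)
The plan is to mirror the structure of \Cref{lem:Bmax<CmaxK}, but without having to track quadrature error, which makes the argument substantially cleaner. I would begin by picking $p_{\max} \in \B$ with $\|p_{\max}\| = \max_{p\in\B}\|p\|$; the case $\|p_{\max}\| = 0$ being trivial, I assume $\|p_{\max}\| > 0$ and set $v = p_{\max}/\|p_{\max}\|$. Because $\B \in \CC^\gamma \subseteq \CC^\infty$, the validity constraint $B(\B,\cdot) \ge C(\cdot)$ holds at the direction $-v \in S^{N-1}$, so I can pick $p_0 \in \B$ with $\langle p_0, -v\rangle \ge C(-v) \ge -\|C\|_\infty$.

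Next I would lower-bound the diameter of $\B$ by computing
\[
\|p_{\max} - p_0\| \ge \langle p_{\max} - p_0, v\rangle = \|p_{\max}\| + \langle p_0, -v\rangle \ge \|p_{\max}\| - \|C\|_\infty.
\]
Applying \Cref{lem:meanwidth>diam} with $K = \|p_{\max}\| - \|C\|_\infty$ then yields
\[
\int_{S^{N-1}} B(\B,u)\,d\sigma(u) \ge \frac{\|p_{\max}\| - \|C\|_\infty}{3\sqrt{N}}.
\]

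For the matching upper bound on the mean width, I would observe that the ball $\|C\|_\infty S^{N-1}$ (more precisely, $\|C\|_\infty \overline{B_1}(0)$) lies in $\CC^\infty$, since its outreach function is the constant $\|C\|_\infty \ge C(u)$ for every $u$. This forces $V^{CP} \le \|C\|_\infty$, and the assumption $\B \in \CC^\gamma$ then gives $\int_{S^{N-1}} B(\B,u)\,d\sigma(u) \le V^{CP} + \gamma \le \|C\|_\infty + \gamma$. Combining the two bounds yields
\[
\frac{\|p_{\max}\| - \|C\|_\infty}{3\sqrt{N}} \le \|C\|_\infty + \gamma,
\]
whence $\|p_{\max}\| \le 3\sqrt{N}(\|C\|_\infty + \gamma) + \|C\|_\infty$, and absorbing the final $\|C\|_\infty$ using $N \ge 1$ gives the claimed bound $4\sqrt{N}(\|C\|_\infty + \gamma)$.

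There is no real obstacle here: the only small subtlety is noticing that the ball of radius $\|C\|_\infty$ is a feasible solution of \eqref{cpprob}, which is what decouples $V^{CP}$ from $\|p_{\max}\|$ and replaces the quadrature-driven bookkeeping of the discrete argument.
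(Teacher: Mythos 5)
Your proposal is correct and follows essentially the same argument as the paper: the same choice of $p_{\max}$ and $v$, the same lower bound on the diameter via $p_0$ in direction $-v$, the same application of \Cref{lem:meanwidth>diam}, and the same use of the ball of radius $\|C\|_\infty$ as a feasible solution to obtain $V^{CP} \le \|C\|_\infty$.
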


With these universal bounds established, we can discuss the two main results of this chapter, which guarantee that our discrete problem \eqref{lpprob} indeed approximates \eqref{cpprob}.

\begin{thm}\label{thm:errorbound_optval}
  Fix some $\epsilon$-accurate quadrature $\{(u_i,w_i)\}_{i=1}^m$ and assume that $\SSS=\{u_i\}_{i=1}^m$ with $\delta=\disp(\SSS)$ satisfies $\epsilon,\delta \le \frac{1}{10\sqrt N}$. Then
    \begin{align*}
      V^{CP}-V^{DP} &\le 12\sqrt N \|C\|_\infty (2\epsilon+\delta)+L_C \delta,\\
      V^{DP}-V^{CP} &\le 8\sqrt N \|C\|_\infty \epsilon.
    \end{align*}
    %
    
    
    \begin{proof}
        For the first statement, let $\left( \left(p^*_i\right)_{i=1}^m , \, \left(B^*_i\right)_{i=1}^m \right)$ be an optimal solution of the linear programming problem \eqref{lpprob} and choose any $\B^{DP}\in\DD^0$ with $B(\B^{DP},u_i)=B^*_i$ for $i=1,2,\dots,m$. We then denote $R^{DP}=\max_{p\in\B^{DP}}\|p\|$ and note that \Cref{prop:B_lipz} along with the definition of an $\epsilon$-accurate quadratures imply
        $$\left|\sum_{i=1}^m w_iB^*_i - \int_{S^{N-1}}B(\B^{DP},u)d\sigma(u)\right| \leq 2\epsilon R^{DP}.$$

        By \Cref{lemm:errorbound_geqC}, we know $C(u)-\delta(L_C+R^{DP}) \leq B(\B^{DP},u),$ which motivates us to define $\B^e$ as
        $$\B^e=\bigcap_{ u \in S^{N-1} } \Pi^-\left( u,B(\B^{DP},u) + \delta(L_C+R^{DP}) \right). $$
        By \Cref{lem:contour_extension} we know that $B(\B^e,u)= B(\B^{DP},u)+\delta(L_C+R^{DP})\geq C(u)$ for all $u\in S^{N-1}$. This yields firstly that
        $$ \int_{S^{N-1}}B(\B^e,u)d\sigma(u)
          =\int_{S^{N-1}}B(\B^{DP},u)d\sigma(u)+\delta(L_C+R^{DP}).
        $$
        Secondly, it implies that $\B^e$ is a feasible solution for the continuous problem, and therefore
        \begin{align*}
            V^{CP} 
            &\leq \int_{S^{N-1}}B(\B^e,u)d\sigma(u)\\
            &= \int_{S^{N-1}}B(\B^{DP},u)d\sigma(u)+\delta(L_C+R^{DP})\\
            &\leq \sum_{i=1}^m w_i B^*_i+2\epsilon R^{DP}+\delta(L_C+R^{DP})\\
            &= V^{DP} +2\epsilon R^{DP}+\delta(L_C+R^{DP}).
        \end{align*}
        Combining with \Cref{lem:Bmax<CmaxK}, this implies $V^{CP}-V^{DP} \le 12\sqrt N \|C\|_\infty(2\epsilon+\delta)+\delta L_C$.
        
        As for the other direction, we consider some optimal solution of the continuous problem $\B^{CP}$ and denote $R^{CP}=\max_{p\in\B^{CP}}\|p\|$. We know that for all $i=1,2,\dots,m$ there is some $p_i^{CP}\in\B^{CP}$ such that $\langle p_i^{CP},u_i\rangle \geq C(u_i)$. This means that $\left( \left(p^{CP}_i\right)_{i=1}^m , \, \left(B(\B^{CP},u_i)\right)_{i=1}^m \right)$ is a feasible solution of the linear programming problem. As a consequence, we have
        \begin{align*}
            V^{DP}
            &\leq \sum_{i=1}^m w_iB(\B^{CP},u_i)\\
            &\leq \int_{S^{N-1}}B(\B^{CP},u)d\sigma(u) + 2\epsilon R^{CP}\\
            &= V^{CP} + 2\epsilon R^{CP},
        \end{align*}
        which concludes the proof by recalling that \Cref{cor:Bmax<CmaxK_cont} says $R^{CP}\leq 4\sqrt N \|C\|_\infty.$
    \end{proof}
\end{thm}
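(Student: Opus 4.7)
The plan is to handle the two inequalities separately, using feasibility transfer in opposite directions, and then absorb everything into the universal radius bounds from \Cref{lem:Bmax<CmaxK,cor:Bmax<CmaxK_cont}.

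For the upper bound $V^{CP}-V^{DP}\le 12\sqrt N\|C\|_\infty(2\epsilon+\delta)+L_C\delta$, I would take an optimal discrete solution with outreaches $B^*_i$, realize it geometrically as some $\B^{DP}\in\DD^0$ (using \Cref{prop:Bstar} or \Cref{cor:Bprime}) and set $R^{DP}=\max_{p\in\B^{DP}}\|p\|$. This set is only guaranteed to satisfy $B(\B^{DP},u)\ge C(u)$ at the quadrature nodes, so \Cref{lemm:errorbound_geqC} gives the universal slack $B(\B^{DP},\cdot)\ge C(\cdot)-\delta(L_C+R^{DP})$. The natural fix is to apply \Cref{lem:contour_extension} with $\SSS=S^{N-1}$ and $e=\delta(L_C+R^{DP})$, producing $\B^e\in\CC^\infty$ whose support function is exactly $B(\B^{DP},\cdot)+e$, hence whose (half) mean width exceeds that of $\B^{DP}$ by exactly $e$. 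Finally, \Cref{prop:B_lipz} says $B(\B^{DP},\cdot)$ is $R^{DP}$-Lipschitz and bounded in absolute value by $R^{DP}$, so the $\epsilon$-accurate quadrature gives $|\sum_i w_iB^*_i-\int B(\B^{DP},u)\,d\sigma(u)|\le 2\epsilon R^{DP}$. Chaining the three sources of error and then substituting the bound $R^{DP}\le 12\sqrt N\|C\|_\infty$ from \Cref{lem:Bmax<CmaxK} (at $\gamma=0$) yields the claimed inequality.

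The lower bound $V^{DP}-V^{CP}\le 8\sqrt N\|C\|_\infty\epsilon$ is simpler because optimal continuous solutions are \emph{automatically} discretely feasible: given an optimal $\B^{CP}\in\CC^\infty$, for each $u_i$ pick $p_i^{CP}\in\B^{CP}$ achieving $\langle p_i^{CP},u_i\rangle=B(\B^{CP},u_i)\ge C(u_i)$, so $((p_i^{CP})_i,(B(\B^{CP},u_i))_i)$ is feasible for \eqref{lpprob}. Hence $V^{DP}\le\sum_i w_i B(\B^{CP},u_i)$, and another application of \Cref{prop:B_lipz} and the $\epsilon$-accurate quadrature bounds this by $V^{CP}+2\epsilon R^{CP}$, where $R^{CP}=\max_{p\in\B^{CP}}\|p\|$. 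Invoking \Cref{cor:Bmax<CmaxK_cont} with $\gamma=0$ gives $R^{CP}\le 4\sqrt N\|C\|_\infty$, finishing the direction.

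The main obstacle is neither direction in isolation but the careful accounting with the radius bounds: the inflation constant $e$ grows with $R^{DP}$, which itself is bounded only after knowing the solution is near-optimal, so one must make sure the hypothesis $\epsilon,\delta\le\tfrac{1}{10\sqrt N}$ is exactly strong enough for \Cref{lem:Bmax<CmaxK} to apply at $\gamma=0$. A secondary small subtlety is that the objective in \eqref{cpprob} is $\int B(\B,u)\,d\sigma(u)$ (half the mean width), which matches both definitions, but when applying $\epsilon$-accuracy one should note that $B(\B,\cdot)$ is simultaneously $R$-Lipschitz and $R$-bounded, so the quadrature error is at most $\epsilon(R+R)=2\epsilon R$, not $\epsilon R$.
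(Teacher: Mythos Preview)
Your proposal is correct and follows essentially the same approach as the paper: for $V^{CP}-V^{DP}$ you realise an optimal discrete solution as $\B^{DP}\in\DD^0$, inflate it by $e=\delta(L_C+R^{DP})$ via \Cref{lem:contour_extension} to land in $\CC^\infty$, control the quadrature error by $2\epsilon R^{DP}$ using \Cref{prop:B_lipz}, and then substitute $R^{DP}\le 12\sqrt N\|C\|_\infty$ from \Cref{lem:Bmax<CmaxK}; for $V^{DP}-V^{CP}$ you observe that any optimal $\B^{CP}$ is automatically feasible for \eqref{lpprob}, bound the quadrature error by $2\epsilon R^{CP}$, and apply \Cref{cor:Bmax<CmaxK_cont}. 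Your remarks on the $2\epsilon R$ factor and on the hypothesis $\epsilon,\delta\le\tfrac{1}{10\sqrt N}$ being needed precisely for \Cref{lem:Bmax<CmaxK} are both accurate.
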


This means that we can arbitrarily well approximate $V^{CP}$ by considering the linear programming problem \eqref{lpprob}. As a consequence of this result, we can also guarantee that the $(S^{N-1},C)$-valid contours constructed using \Cref{lem:contour_extension} can be made universally near-optimal in terms of the original continuous problem \eqref{cpprob}, by means of the following result.

\begin{thm}\label{thm:DPepsinCPeps_discrete}
  Fix some $\epsilon$-accurate quadrature $\{(u_i,w_i)\}_{i=1}^m$ and assume that $\SSS=\{u_i\}_{i=1}^m$ with $\delta=\disp(\SSS)$ satisfies $\epsilon,\delta \le \frac{1}{10\sqrt N}$. Let $e =\delta \big(L_C + 12\sqrt N(\|C\|_\infty+\gamma) \big)$ and define $\B^e =\bigcap_{ u \in \SSS } \Pi^-\left( u,B(\B,u) + e) \right)$ for any $\B\in\DD^\gamma$, $\gamma \ge 0$.
  
  We then have 
  $$ \B^e \in \CC^{\gamma+\beta}, $$
  where {$\beta = 4e + 32\sqrt N(\|C\|_\infty+\gamma)\, \epsilon$}.

    \begin{proof}
        We know from \Cref{lemm:errorbound_geqC,lem:contour_extension,lem:Bmax<CmaxK} that $\B^e$ satisfies 
        $$
        B(\B^e,u) 
        \geq B(\B,u) + e\\
        \geq  \big( C(u)-e \big) +e\\
        =C(u),
        $$
        for all $u\in S^{N-1}$, which implies $\B^e\in\CC^\infty$. Furthermore, we have 
        $$\sum_{i=1}^m w_iB(\B^e,u_i)
        =\sum_{i=1}^m w_i\big(B(\B,u_i)+e\big)
        \leq V^{DP}+\gamma+(1+\epsilon)e,$$
        which yields $ \B^e\in\DD^{\gamma+(1+\epsilon)e} $. This, along with the definition of $\epsilon$-accurate quadratures, \Cref{prop:B_lipz} and \Cref{lem:Bmax<CmaxK}, implies
        $$\left|\sum_{i=1}^m w_iB(\B^e,u_i)-\int_{S^{N-1}}  B(\B^e,u) d\sigma(u)\right|\leq 24\sqrt N\Big(\|C\|_\infty+\gamma+(1+\epsilon)e\Big) \epsilon.$$
        For ease of notation we denote $\alpha=24\sqrt N\Big(\|C\|_\infty+\gamma+(1+\epsilon)e\Big) \epsilon$. Combining these facts with \Cref{thm:errorbound_optval} then yields
        \begin{align*}
            \int_{S^{N-1}}B(\B^e,u)d\sigma(u)
            &\leq  \sum_{i=1}^m w_iB(\B^e,u_i) + \alpha\\
            &\leq V^{DP} +\gamma +(1+\epsilon)e+ \alpha \\
            &\leq V^{CP}+8\sqrt N\|C\|_\infty\epsilon + \gamma+(1+\epsilon)e+
            \alpha\\
            &\le V^{CP}+\beta+\gamma,
        \end{align*}
        which implies $\B^e \in \CC^{\gamma+\beta}$.
    \end{proof}
\end{thm}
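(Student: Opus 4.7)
The plan is to prove $\B^e \in \CC^{\gamma+\beta}$ in two stages: first verify feasibility ($\B^e \in \CC^\infty$), then bound its mean width. For feasibility, I would combine three of the already-established lemmas. Since $\B \in \DD^\gamma$, \Cref{lem:Bmax<CmaxK} gives $\max_{p \in \B}\|p\| \le 12\sqrt N(\|C\|_\infty+\gamma)$, which plugged into \Cref{lemm:errorbound_geqC} yields $B(\B,u) \ge C(u) - e$ for every $u \in S^{N-1}$ with exactly the chosen inflation amount $e$. Then \Cref{lem:contour_extension} applied with $\SSS$ and inflation $e$ gives $B(\B^e,u) \ge B(\B,u) + e \ge C(u)$, so $\B^e \in \CC^\infty$.

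For the mean-width bound, I would first compute the discrete sum $\sum_i w_i B(\B^e,u_i)$ using that \Cref{lem:contour_extension} gives equality $B(\B^e, u_i) = B(\B, u_i) + e$ on the grid points. Since $\B \in \DD^\gamma$ and $\sum_i w_i \le 1 + \epsilon$ (from applying the $\epsilon$-accurate property to the constant function $1$), this gives $\sum_i w_i B(\B^e, u_i) \le V^{DP} + \gamma + (1+\epsilon)e$. In particular, $\B^e \in \DD^{\gamma + (1+\epsilon)e}$, so \Cref{lem:Bmax<CmaxK} applies again to yield $\max_{p \in \B^e}\|p\| \le 12\sqrt N(\|C\|_\infty + \gamma + (1+\epsilon)e)$. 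Then \Cref{prop:B_lipz} together with the $\epsilon$-accurate quadrature property controls the discretisation error of $B(\B^e,\cdot)$ by $2\epsilon \cdot 12\sqrt N(\|C\|_\infty + \gamma + (1+\epsilon)e)$, and the bound $V^{DP} \le V^{CP} + 8\sqrt N \|C\|_\infty \epsilon$ from \Cref{thm:errorbound_optval} lets me replace $V^{DP}$ by $V^{CP}$.

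Assembling these ingredients yields $\int B(\B^e,u)\,d\sigma(u) - V^{CP} \le \gamma + (1+\epsilon)e + 8\sqrt N \|C\|_\infty \epsilon + 24\sqrt N(\|C\|_\infty + \gamma + (1+\epsilon)e)\epsilon$. The only non-routine step is the final bookkeeping to collapse this into the clean form $\gamma + \beta$ with $\beta = 4e + 32\sqrt N(\|C\|_\infty + \gamma)\epsilon$. Using $\epsilon \le \tfrac{1}{10\sqrt N}$ we have $(1+\epsilon)e \le 1.1 e$ and $24\sqrt N (1+\epsilon) e \epsilon \le 2.64 e$, so the $e$-coefficients sum to at most $1.1 + 2.64 < 4$, while the remaining $\epsilon$ terms combine as $8\sqrt N\|C\|_\infty\epsilon + 24\sqrt N(\|C\|_\infty+\gamma)\epsilon \le 32\sqrt N(\|C\|_\infty+\gamma)\epsilon$, giving the desired $\beta$.

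The main obstacle is the circularity in bounding $R^e := \max_{p \in \B^e}\|p\|$: the quadrature error for $B(\B^e,\cdot)$ depends on $R^e$, but bounding $R^e$ via \Cref{lem:Bmax<CmaxK} first requires showing $\B^e$ lies in some $\DD^{\gamma'}$. The resolution is that this bootstrap closes cleanly because the discrete objective $\sum_i w_i B(\B^e, u_i)$ can be computed exactly in terms of $\B$ (via the equality case of \Cref{lem:contour_extension} on $\SSS$), giving $\B^e \in \DD^{\gamma + (1+\epsilon)e}$ without any appeal to continuous integration. Beyond that, everything reduces to assembling the pre-proven estimates with the stipulated smallness of $\epsilon$ and $\delta$.
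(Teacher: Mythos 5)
Your proposal follows exactly the paper's own proof: feasibility via \Cref{lemm:errorbound_geqC,lem:contour_extension,lem:Bmax<CmaxK}, then the discrete objective $\sum_i w_i B(\B^e,u_i) \le V^{DP}+\gamma+(1+\epsilon)e$ to place $\B^e \in \DD^{\gamma+(1+\epsilon)e}$, then the quadrature error of size $24\sqrt N(\|C\|_\infty+\gamma+(1+\epsilon)e)\epsilon$, then $V^{DP} \le V^{CP} + 8\sqrt N\|C\|_\infty\epsilon$ from \Cref{thm:errorbound_optval}. Your final bookkeeping showing the assembled bound collapses to $\gamma+\beta$ is correct and simply makes explicit the arithmetic the paper leaves implicit in its last inequality.
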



\section{Numerical examples}\label{sec:examples}

\begin{figure}
  \includegraphics[width=0.45\linewidth]{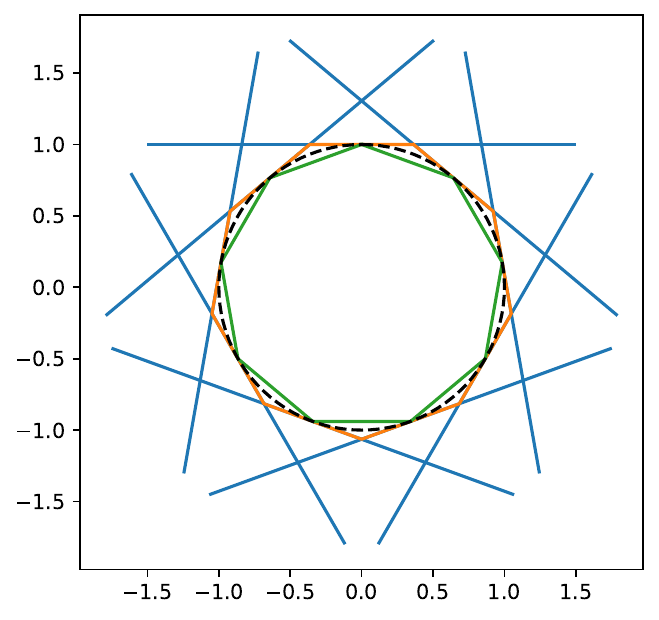}
  \vspace{-3mm}
  \caption{\label{fig:circle}We discretise the true outreach requirement (black circle) into only nine directions (blue). This highlights the difference between $\B^*$ from \Cref{prop:Bstar} (green) and $\B'$ from \Cref{cor:Bprime} (orange). $\B^*$ always lies inside of $\B'$.}
\end{figure}

\begin{exam}[Difference between $\B^*$ and $\B'$]\label{exam:BstarinBprime}
    \Cref{fig:circle} highlights the difference between the convex hull $\B^*$ from \Cref{prop:Bstar}, and the intersection of half-spaces $\B'$ from \Cref{cor:Bprime}. We use the unit circle outreach requirement $C = 1$ and sample it in nine evenly spaced directions. We see that the half-spaces defining $\B'$ each tangent the unit circle from the outside. On the other hand, the corners defining $\B^*$ lie on the circle, giving a convex hull inside the circle.

    This means that using $\B'$ will be a more conservative estimate, and therefore a safer choice. In this particular case we even have $\B'\in\CC^\infty$, making it a $(C,S^{N-1})$-valid contour without the need for inflation as per \Cref{thm:DPepsinCPeps_discrete}.
\end{exam}

\begin{figure}
  \includegraphics[width=0.485\textwidth]{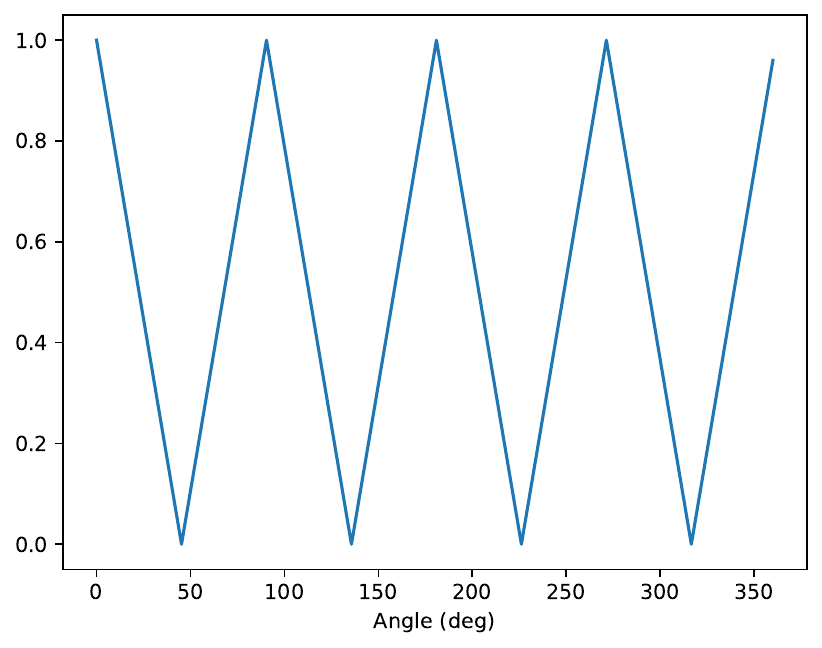} 
  \includegraphics[width=0.415\textwidth]{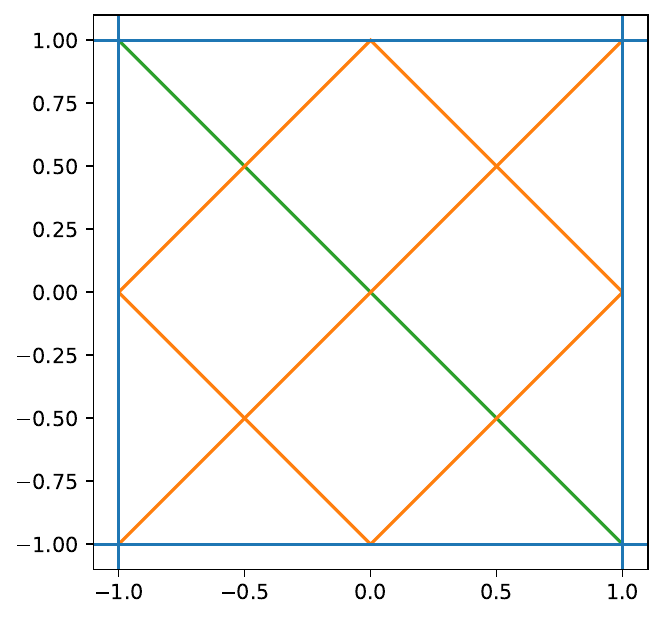} 
  \vspace{-3mm}
  \caption{\label{fig:square}Left: Periodic constraint function $C$ constructed such that \Cref{cpprob} has several shapes with minimal perimeter. Right: The outreach requirement boils down to requiring the shape to touch the four sides of a square (blue). A specific numerical implementation of our method \eqref{lpprob_2d1} output the green shape. We highlight two other optimal shapes (orange).}
\end{figure}

\begin{exam}[Multiple optimal shapes]\label{exam:square}
    We construct a Lipschitz-continuous outreach requirement (\Cref{fig:square}, left) which requires the shape to reach outreach 1 in each of the four cardinal directions. For this requirement, there are infinitely many shapes with the optimal perimeter. There are two distinct solution minimising the area: the two diagonal line segments with zero area. However, interpolating the two diagonal extremes are infinitely many 45 degree tilted rectangles, also with optimal perimeter.
\end{exam}

\begin{figure}
  \includegraphics[width=0.45\linewidth]{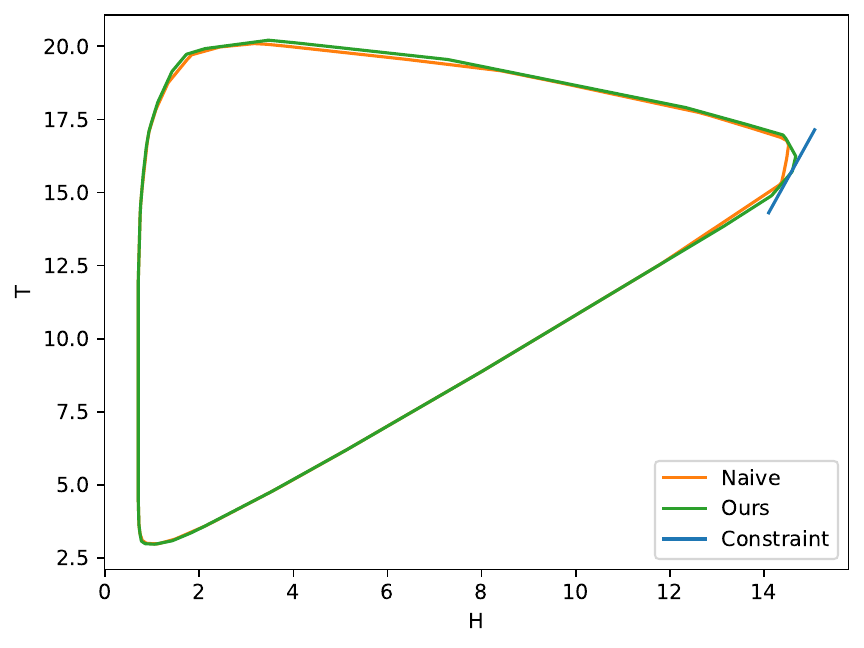}
  \includegraphics[width=0.45\linewidth]{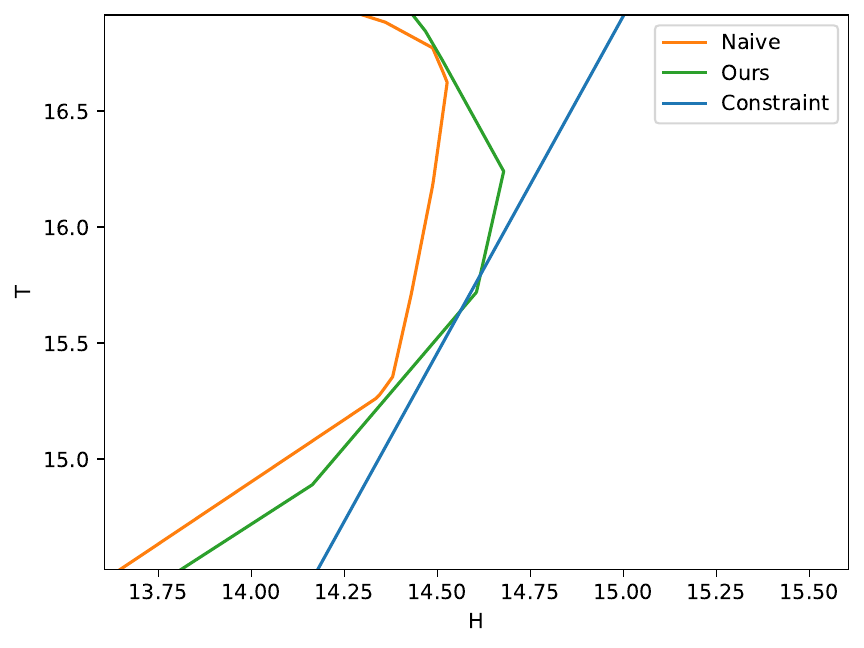}
  \vspace{-3mm}
  \caption{\label{fig:arne2d}The naive method fails to output a convex shape satisfying the requirements, when the outreach function $C$ contains noise. {We highlight a violated outreach constraint in blue. The plots are the same, with the right one being zoomed in.} }
\end{figure}

\begin{exam}[Comparing with naive method]\label{exam:2d}
    In \cite{altcontour}, they compute $C$ under the assumption that the environmental loads, $V$, are modeled as a sequence of i.i.d.\ random variables $\{W_n\}_{n=0}^\infty$. Specifically, they assume that $V_t=W_{\lfloor t/\Delta t\rfloor}$ for a time increment $\Delta t\in \R_+$, where $\lfloor \cdot  \rfloor$ denotes the floor function. They then define $C(u)$ by the upper $p_e$-quantile of $\langle W,u \rangle$, for a target exceedance probability $p_e$. This will guarantee that the mean time to failure for any convex failure set not intersecting with a $(C,S^{N-1})$-valid contour is at least $\Delta t/p_e$. We select $p_e = {1}/{29200}$ and $\Delta t=3$ hours, implying a 10 year lower bound on the mean time to failure.

    The suggested method presented in the aforementioned article \cite{altcontour}, as well as the recommended practises of DNV \cite{veritas2000environmental}, is the following. Find a model for $W$, simulate a number of samples, and use the empirical quantiles of $\langle W,u \rangle$ as an estimate of $C(u)$ for a finite selection of directions. We choose $3\times 10^5$ samples in $360$ uniformly spaced directions.
    
    In \cite{altcontour}, $W$ was modelled as $W=(H,T)$. Here $H$ is a 3-parameter Weibull-distributed random variable in $\R$ representing significant wave height. $H$ has scale $2.259$, shape $1.285, $ and location $0.701$. Similarly, $T$ represents the zero-upcrossing wave period and is assumed to follow a conditional log-normal distribution, i.e.\ $\ln(T)$ is normally distributed with with conditional mean $1.069 +0.898H^{0.243},$ and conditional standard deviation $   0.025 + 0.263e^{-0.148H}.$

    It is further suggested to construct $\B$ by
    \begin{equation}\label{eq:naiveconstruction}
        \B=\bigcap_{u\in S^{N-1}}\Pi^-(u,C(u)).
    \end{equation}
    We will henceforth refer to \eqref{eq:naiveconstruction} as the \textit{naive method}. 
    
    The estimated $C$ is slightly noisy. As pointed out in e.g.\ \cite{convcont,voronoi}, this noise may causes the naive method to fail to satisfy the outreach requirements. For an explanation on why this occurs, we refer to either \cite{convcont,voronoi} or the discussion around \Cref{fig:loopnoloop}.
    
    As seen in \Cref{fig:arne2d}, the naive method outputs an improper contour. Using the naive method with the $C$ from \Cref{exam:square}, would cause it to output the single point $(0,0)$, which does not satisfy the outreach requirements.
\end{exam}

\begin{figure}
  \includegraphics[width=0.45\linewidth]{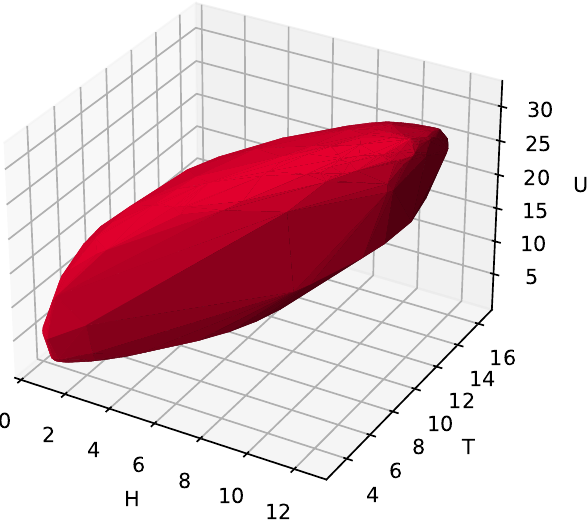}
  \vspace{-3mm}
  \caption{\label{fig:arne3d}Shape found by \Cref{lpprob} for the 3d example described in \Cref{exam:3d}.}
\end{figure}

\begin{exam}[Three dimensions]\label{exam:3d}
    To demonstrate our method in three dimensions, we use an example from \cite{vanem20193}. The construction of $C$ is similar to \Cref{exam:2d}. The sequence $W = (H,T,U)$ distributed as follows. $H$ follows a 3-parameter Weibull distribution with scale $1.798$, shape $1.214$ and location $0.856$. Given $H$, $\ln(T)$ is normally distributed with mean $-1.010+2.847 H^{0.075}$ and standard deviation $0.161+0.146 e^{-0.683 H}$. Finally, given $H$, $U$ follows a 2-parameter Weibull distribution with scale $2.58+0.12 H^{1.6}$ and shape $4.6+2.05 H$. We use the empirical quantiles with $10^6$ samples of $\langle W,u \rangle$ to estimate $C(u)$, and select $p_e = \frac{1}{29200}$.
    
    For discretization, we sample $C$ according to the cubed hypersphere quadrature with 10 subdivisions (\Cref{cubed_hypersphere_accurate}). Then we solve \Cref{lpprob} and compute the convex hull $\B^*$ defined in \Cref{prop:Bstar}. \Cref{fig:arne3d} visualizes the resulting contour.
\end{exam}

\begin{figure}
  \includegraphics[width=0.45\textwidth]{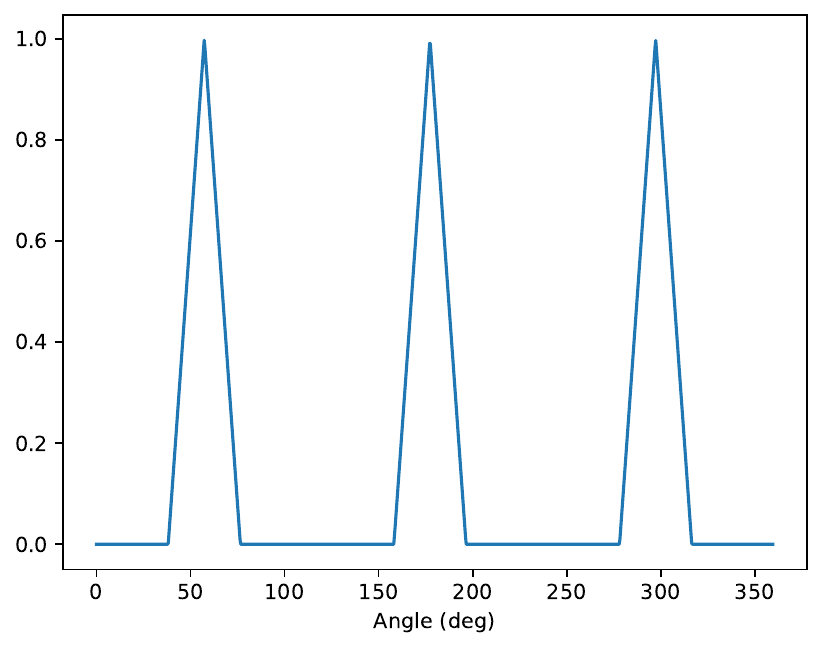} 
  \includegraphics[width=0.455\textwidth]{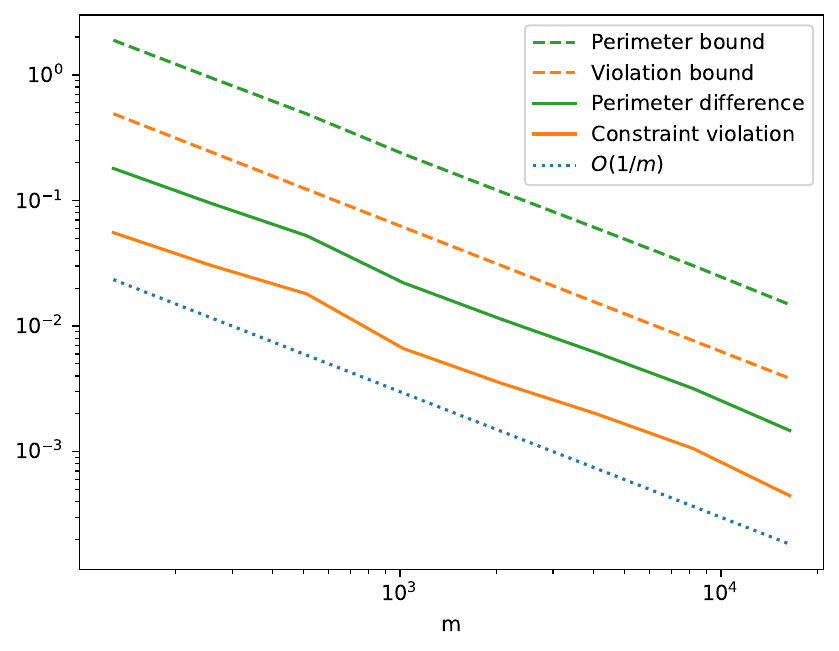} 
  \vspace{-3mm}
  \caption{\label{fig:lip_tri}Left: A 3-Lipschitz continuous constraint function $C$.
  Right: The empirical convergence rate of our method matches the rate $O(1/m)$ given by our theory (\Cref{thm:DPepsinCPeps_discrete}).}
\end{figure}
\begin{exam}[Empirical convergence rate]\label{exam:lip_tri}
    \Cref{fig:lip_tri} demonstrates the empirical convergence rate of our method on a 3-Lipschitz function consisting of three spikes. The optimal shape, i.e.\ the sole element of $\CC^0$, is an equilateral triangle. Since $C$ is only sampled in a finite number of directions, and these directions rarely exactly match the spikes of $C$, the samples underestimate $C$. This leads the direct (not inflated) discrete solution $\B'$ to violate the constraint \eqref{cpconstr} in unseen directions. We plot the largest such constraint violation, i.e., $\sup_{u\in S^1} C(u)-B(\B',u)$, and see that it is indeed bounded as per \Cref{thm:DPepsinCPeps_discrete}, which guarantees convergence of order $O(1/m)$. The perimeter difference between the solution $\B'$ found by our method and the optimal solution $\mathcal{B}^*$, i.e., \[2\pi\left|\int_{S^{1}} B(\B',u) d\sigma(u) - \int_{S^{1}} B(\B^*,u) d\sigma(u)\right|,\] also converges like $O(1/m)$ as predicted by the bound from \Cref{thm:DPepsinCPeps_discrete}.
\end{exam}

\begin{figure}
  \includegraphics[width=0.450\textwidth]{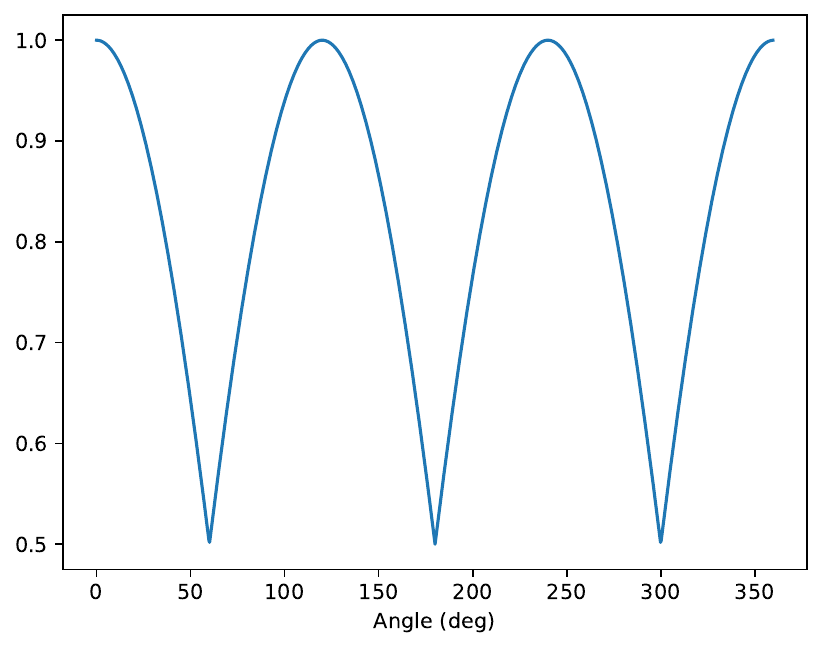} 
  \includegraphics[width=0.455\textwidth]{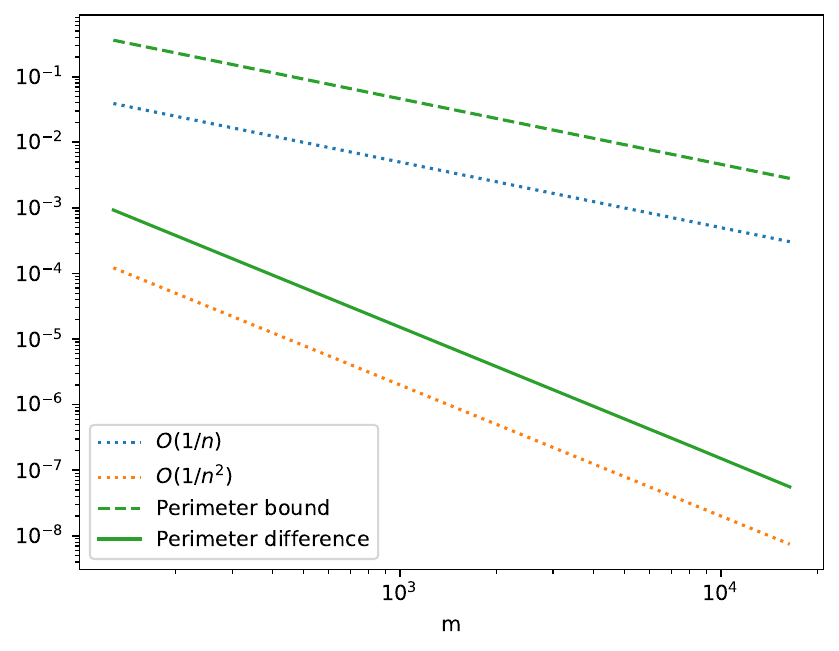} 
  \vspace{-3mm}
  \caption{\label{fig:nice_tri}Left: A  outreach function  corresponding to the outreach function of an equilateral triangle. Right: The empirical convergence rate for this case seems to be $O(1/m^2)$.}
\end{figure}
\begin{exam}[Empirical convergence is often $O(1/m^2)$]\label{exam:nice_tri}
    While the convergence rate $O(1/m)$ can not be improved in general, as seen in \Cref{exam:lip_tri}, in practice, we often see the faster rate $O(1/m^2)$. For example, we may choose $C$ as the outreach function of an equilateral triangle, i.e.,
    \[C(u) = \max\{\cos(\theta),\ \cos(\theta-2\pi/3),\ \cos(\theta-4\pi/3)\},\]
    for $u=\left(\cos(\theta),\sin(\theta)\right)$. In this case, the solution $\mathcal{B}'$ found by our method always satisfies the continuous constraints \eqref{cpconstr} perfectly, and we empirically find a convergence rate of $O(1/m^2)$. Note that while $C$ is $\sqrt{3}/2$-Lipschitz continuous, it is not everywhere differentiable, and the optimal shape (equilateral triangle) has sharp corners.
\end{exam}

\section{Connection with Hausdorff Topology}\label{sec:hausdorff}

While the previous section guarantees the construction of arbitrarily near-optimal $(S^{N-1},C)$-valid contours, there is still the question of whether all elements of $\CC^0$ can be approximated in this way. In order to examine this question, we will need the concept of the Hausdorff metric. The resulting topology turns out to be a natural framework for examining convergence of the discrete approximations of our continuous problem, but it will also allow us to properly prove our earlier claim that the continuous problem indeed has optimal solutions.

\subsection{Basic Concepts and Definitions}

The main tool we will use here is the Hausdorff metric, defined as follows.

\begin{defi}
    Let $(X,d)$ be a metric space, and let $F(X)$ denote the collection of all non-empty, compact subsets of $X$. For any $x\in X$ and $A,\,B\in F(X)$, we can define $d(x,B)=d(B,x)=\min_{b\in\B}d(x,b)$ and
    $$d_\HH(A,B)=\max\left( \max_{a\in A}d(a,B),\,\max_{b\in B}d(A,b) \right).$$
    The set-function $d_\HH$ is referred to as the Hausdorff distance.
\end{defi}

We have the following basic properties of $d_\HH$, for a proof of these properties we refer to \cite{Hausdorffproperties}.

\begin{prop}\label{prop:haus_properties}
  The space $(F(X),d_\HH)$ is a metric space. Furthermore, if $(X,d)$ is a complete and compact metric space, then $(F(X),d_\HH)$ is complete and compact as well. Lastly, if $X$ is a Banach space and $\{A_n\}_{n=1}^\infty\subset F(X)$ is a sequence of convex sets converging to some set $A\in F(X)$, then $A$ is also convex.
\end{prop}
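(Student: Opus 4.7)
The plan is to verify the three assertions in turn using standard arguments. For the first, that $(F(X),d_\HH)$ is a metric space, symmetry is built into the definition of $d_\HH$ and $d_\HH \ge 0$ is immediate. Positive-definiteness uses compactness of elements of $F(X)$: if $d_\HH(A,B)=0$, then $\max_{a\in A} d(a,B)=0$ forces every $a\in A$ to lie in the closed set $B$, and symmetrically. The only real work is the triangle inequality. I would fix $A,B,C \in F(X)$ and any $a \in A$, pick $c \in C$ realizing $d(a,C)$ and $b \in B$ realizing $d(c,B)$ by compactness, and use the triangle inequality in $X$ to get $d(a,B) \le d(a,c)+d(c,b) \le d(a,C)+d_\HH(C,B) \le d_\HH(A,C)+d_\HH(C,B)$. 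Maximizing over $a$ and symmetrizing in $A,B$ yields the triangle inequality.

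For the second statement, completeness and compactness are handled separately. For completeness, given a Cauchy sequence $\{A_n\} \subset F(X)$, I would define the candidate limit as
$$A = \left\{ x \in X : x = \lim_{k\to\infty} x_{n_k} \text{ for some subsequence } n_k \text{ and } x_{n_k}\in A_{n_k} \right\}.$$
The task is then to show $A \in F(X)$ and $d_\HH(A_n,A) \to 0$. The standard approach is to extract a subsequence with $d_\HH(A_{n_k},A_{n_{k+1}}) < 2^{-k}$, so that any $x_{n_1} \in A_{n_1}$ can be extended to a Cauchy sequence $\{x_{n_k}\}$ with $x_{n_k}\in A_{n_k}$; this simultaneously produces non-emptiness of $A$ and bounds $d(x_{n_k},A) \to 0$. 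Compactness of $A$ reduces to total boundedness, which follows from the Cauchy property together with total boundedness of a single $A_N$. For compactness of $(F(X),d_\HH)$, it suffices to show total boundedness once we have completeness: given $\epsilon > 0$, take a finite $\epsilon$-net $N \subset X$, and observe that for any $A \in F(X)$, the subset $N_A := \{p \in N : d(p,A) \le \epsilon\}$ satisfies $d_\HH(A,N_A) \le \epsilon$. Since $N$ has finitely many subsets, this gives a finite $\epsilon$-net in $F(X)$.

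For the third statement, take $x,y \in A$ and $\lambda \in [0,1]$. Since $d_\HH(A_n,A) \to 0$, in particular $\max_{a\in A} d(a,A_n) \to 0$, so there exist $x_n,y_n \in A_n$ with $x_n \to x$ and $y_n \to y$ in $X$. By convexity of each $A_n$, the convex combination $z_n := \lambda x_n + (1-\lambda) y_n$ lies in $A_n$, and by continuity of the vector-space operations in the Banach space $X$ we have $z_n \to z := \lambda x + (1-\lambda) y$. Finally, $d(z_n,A) \le d_\HH(A_n,A) \to 0$, and since $A$ is compact (hence closed), $z \in A$. This proves convexity of $A$.

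The main obstacle is the compactness part of the second claim: producing the candidate limit set and simultaneously verifying completeness and total boundedness requires the careful subsequence extraction described above, and total boundedness of $F(X)$ from total boundedness of $X$ relies on the slightly non-obvious trick of taking subsets of an $\epsilon$-net as the approximating finite family. The triangle inequality and the convexity preservation are more routine once the framework is set up.
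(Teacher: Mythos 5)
The paper does not actually prove \Cref{prop:haus_properties}; it defers the proof to the cited reference \cite{Hausdorffproperties}, so there is no in-paper argument to compare against. Your proof is a correct, standard treatment of all three claims, and the ideas you single out as the nontrivial ones (the limit-set construction and the $\epsilon$-net-of-subsets trick for total boundedness of $F(X)$) are indeed the standard route to Blaschke-type completeness and compactness.

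Two small remarks on the completeness sketch, neither fatal. First, after constructing the candidate limit $A$ from subsequential limits, you verify $\max_{x\in A_{n_1}} d(x,A)\to 0$ via the $2^{-k}$-fast subsequence, but the other half of the Hausdorff distance, $\max_{x\in A} d(x,A_n)\to 0$, also needs a word: it follows because each $x\in A$ is by construction a limit of points $x_{n_k}\in A_{n_k}$, and for $n$ large $d(x,A_n)\le d(x,x_{n_k})+d_\HH(A_{n_k},A_n)$, both small. Second, when you say compactness of $A$ ``reduces to total boundedness,'' you are implicitly using that $A$ is closed in the complete space $X$; closedness of $A$ itself requires a short diagonal argument with the subsequential-limit definition, which is worth flagging in a full write-up. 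The triangle inequality and the convexity-preservation arguments are exactly right as written.
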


In what follows, we will consider $(\R^N,d)$ where $d$ is the canonical Euclidean metric on $\R^N$. This allows us to define the Hausdorff metric on $F(\R^N)$, but also allows us to discuss $F(F(\R^N))$, i.e.\ compact collections (in $d_\HH$) of compact subsets of $\R^N$ and the associated metric, $d_{\HH_\HH}$. With these definitions we can discuss $d_{\HH_\HH}(\CC^0,\DD^\gamma)$ which quantifies how well our discrete problem can approximate the \textit{entirety} of $\CC^0$.

\subsection{Existence of Solutions to the Continuous Problem}
Our first point of order is our previous claim of existence of solutions to \Cref{cpprob}.

\begin{thm}\label{thm:CP_sol_exist}
    Define the objective function $\phi \colon \CC^\infty\mapsto [V^{CP},\infty)$ by
        $$\phi(\B)=\int_{S^{N-1}} B(\B,u) d\sigma(u).$$
    We then have that $\phi$ is Lipschitz continuous in $(F(\R^N),d_\HH)$ with Lipschitz constant $1$. Furthermore, for any $\gamma\geq 0$, $\CC^\gamma\subset F(\R^N)$ is non-empty and compact in the resulting Hausdorff topology. As a specific consequence of this, the continuous problem \eqref{cpprob} has at least one optimal solution.

    \begin{proof}
        In what follows, we will need the following relation. If $\B_1,\,\B_2 \subset \R^N$ are compact then $|B(\B_1,u)-B(\B_2,u)|\leq d_\HH(\B_1,\B_2)$ for all $u\in S^{N-1}$. To see this, we pick $p_1\in\B_1$ such that $ \langle p_1 , u \rangle=B(\B_1,u) $. From the definition of the Hausdorff distance, there must be some $p_2\in\B_2$ such that $\|p_1-p_2\| \leq d_\HH(\B_1,\B_2)$ which yields
        \begin{align*}
            B(\B_1,u)
            &=\langle p_1,u \rangle \\
            &= \langle p_1-p_2,u \rangle + \langle p_2,u \rangle \\
            &\leq d_\HH(\B_1,\B_2) + B(\B_2,u).
        \end{align*}
        This proves that $B(\B_1,u)-B(\B_2,u)\leq d_\HH(\B_1,\B_2)$, and an identical argument with $B_1$ and $B_2$ interchanged gives $B(\B_2,u)-B(\B_1,u)\leq d_\HH(\B_1,\B_2)$. This implies that for any $u\in S^{N-1}$, the function $\B\mapsto B(\B,u)$ is Lipschitz continuous with Lipschitz constant $1$.

        Next, we fix some $\gamma\geq0$ and aim to prove compactness of $\CC^\gamma$, leaving non-emptiness for later. Note that $\CC^\gamma$ is either non-empty or trivially compact, as such we can, without loss of generality, assume that $\CC^\gamma$ is non-empty. Furthermore, by \Cref{cor:Bmax<CmaxK_cont}, we note that for every $\B\in\CC^\gamma$ we have $\B\subset K_\gamma$ where 
        $$K_\gamma = \{x\in\R^N:\|x\|\leq 4\sqrt N(\|C\|_\infty+\gamma)\}.$$
        If we denote by $F(K_\gamma)$ the collection of all compact subsets of $K_\gamma$, we have that $\CC^\gamma\subset F(K_\gamma)$. Furthermore, by \Cref{prop:haus_properties}, we have that $F(K_\gamma)\subset F(\R^N)$ equipped with the Hausdorff metric is a complete and compact metric space.

        We next aim to prove that $\CC^\gamma$ is a closed (and therefore compact) subset of $F(K_\gamma)$ in the Hausdorff topology. To see this, consider any convergent sequence $\{B_n\}_{n=1}^\infty\subset \CC^\gamma \subset F(K_\gamma)$ with $\B_n\to \B$. By \Cref{prop:haus_properties} we have that $\B$ is convex, and by completeness of $F(K_\gamma)$ we have $\B$ compact. Furthermore, by continuity of $\B\mapsto B(\B,u)$, we have for any $ u\in S^{N-1} $ that
        $$B(\B,u)=\lim_{n\to\infty}B(\B_n,u)\geq C(u),$$
        implying $\B\in\CC^\infty$. Lastly, by $B(\B_n,u)\leq \max_{p\in \B_n}\|p\|\leq 4\sqrt N (\|C\|_\infty+\gamma)$ for all $u\in S^{N-1}$, we can apply the dominated convergence theorem to get
        \begin{align*}
            \int_{S^{N-1}} B(\B,u) d\sigma(u)
            &=\int_{S^{N-1}} \lim_{n\to\infty}B(\B_n,u) d\sigma(u)\\
            &=\lim_{n\to\infty}\int_{S^{N-1}} B(\B_n,u) d\sigma(u)\\
            &\leq V^{CP}+\gamma,
        \end{align*}
        which implies $\B\in\CC^\gamma$. As a consequence, for any $\gamma\geq0$, $\CC^\gamma$ is a closed subset of the compact space $F(K_\gamma)$, and therefore itself compact. 

        As for non-emptiness of $\CC^\gamma$ we first remark that $\CC^\gamma$ is non-empty for any $\gamma > 0$. To see this, note by the definition of $V^{CP}$ that there exists either an optimal solution in $\CC^0$ with mean width $V^{CP}$ or a sequence of near-optimal solutions with mean width converging to $V^{CP}$. Either way, this implies that $\CC^\gamma$ is non-empty for any $\gamma>0$.
        
        Next, assume $\gamma >0$ and note that for any $\B_1,\,\B_2\in\CC^\gamma$ we have
        \begin{align*}
            |\phi(\B_1)-\phi(\B_2)|
            &=\left|\int_{S^{N-1}} (B(\B_1,u)-B(\B_2,u)) d\sigma(u)\right|\\
            &\leq \int_{S^{N-1}} \left|B(\B_1,u)-B(\B_2,u)\right| d\sigma(u)\\
            &\leq \int_{S^{N-1}} d_\HH(\B_1,\B_2) d\sigma(u)\\
            &=d_\HH(\B_1,\B_2).
        \end{align*}
        This means that $\phi$ is Lipschitz, so it must attain a minimal value on $\CC^\gamma$ for any $\gamma > 0$. This minimum, by definition, will also be a minimum on $\CC^\infty$. Consequently, this minimal point, $\B_\text{min}$, must satisfy $\phi(\B_\text{min})=V^{CP}$, yielding $\CC^0$ non-empty.
    \end{proof}
\end{thm}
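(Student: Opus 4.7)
The plan is to build the proof in three stages: Lipschitz continuity of $\phi$, compactness of $\CC^\gamma$, and then non-emptiness (which automatically yields existence of an optimiser).

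First I would prove the pointwise Lipschitz estimate $|B(\B_1,u)-B(\B_2,u)| \le d_\HH(\B_1,\B_2)$ for every $u\in S^{N-1}$. This is the key technical fact, and it follows directly from the definition of the Hausdorff metric: pick $p_1\in\B_1$ maximising $\langle p_1, u\rangle$, then by definition of $d_\HH$ there is $p_2\in\B_2$ with $\|p_1-p_2\|\le d_\HH(\B_1,\B_2)$, and expanding $\langle p_1,u\rangle = \langle p_1-p_2,u\rangle + \langle p_2, u\rangle$ together with Cauchy--Schwarz gives the bound; symmetry finishes the estimate. Integrating this inequality over $S^{N-1}$ against the probability measure $\sigma$ gives $|\phi(\B_1)-\phi(\B_2)|\le d_\HH(\B_1,\B_2)$, i.e. the claimed $1$-Lipschitz property.

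Next I would establish compactness of $\CC^\gamma$ for every $\gamma \ge 0$. By \Cref{cor:Bmax<CmaxK_cont} every element of $\CC^\gamma$ is contained in a fixed closed ball $K_\gamma \subset \R^N$. Since $K_\gamma$ is compact, \Cref{prop:haus_properties} tells us that $(F(K_\gamma),d_\HH)$ is a compact metric space, so it suffices to show $\CC^\gamma$ is closed inside $F(K_\gamma)$. Take any convergent sequence $\B_n \to \B$ in $\CC^\gamma$. Convexity of $\B$ comes from \Cref{prop:haus_properties}, compactness from the ambient compact ball, the inequality $B(\B,u) \ge C(u)$ from the pointwise Lipschitz estimate passed to the limit, and the mean-width bound $\phi(\B)\le V^{CP}+\gamma$ from dominated convergence (the integrands are uniformly bounded by the radius of $K_\gamma$). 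Hence $\B\in\CC^\gamma$ and $\CC^\gamma$ is closed, therefore compact.

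Finally, for non-emptiness and optimality: the set $\|C\|_\infty \, S^{N-1}$ (or its convex hull, i.e.\ the closed ball of radius $\|C\|_\infty$) lies in $\CC^\infty$, so $V^{CP}$ is a finite real number and $\CC^\gamma$ is non-empty for every $\gamma>0$ by definition of infimum. Lipschitz continuity of $\phi$ then ensures $\phi$ attains its minimum on the non-empty compact set $\CC^\gamma$ (any $\gamma>0$ will do); call this minimiser $\B_{\min}$. Since $\B_{\min}\in\CC^\gamma\subset\CC^\infty$, its value $\phi(\B_{\min})$ is an upper bound for any infimum over $\CC^\infty$, and since $\phi(\B_{\min})\le V^{CP}+\gamma$ for every $\gamma>0$ we conclude $\phi(\B_{\min})=V^{CP}$, showing $\B_{\min}\in\CC^0$. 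This simultaneously establishes non-emptiness of $\CC^0$ and existence of an optimal solution to \eqref{cpprob}. The main delicate point is ensuring closedness of $\CC^\gamma$ under the mean-width constraint, where one has to justify passing the integral inequality $\int B(\B_n,u)\,d\sigma(u) \le V^{CP}+\gamma$ to the limit; the uniform $L^\infty$ bound on $B(\B_n,\cdot)$ coming from $\B_n\subset K_\gamma$ makes dominated convergence applicable and resolves this.
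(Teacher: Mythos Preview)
Your proposal is correct and follows essentially the same route as the paper: the pointwise estimate $|B(\B_1,u)-B(\B_2,u)|\le d_\HH(\B_1,\B_2)$, the embedding $\CC^\gamma\subset F(K_\gamma)$ via \Cref{cor:Bmax<CmaxK_cont}, closedness via \Cref{prop:haus_properties} plus dominated convergence, and finally existence of a minimiser of the Lipschitz $\phi$ on the compact $\CC^\gamma$. The only slight wrinkle is your last sentence, where ``$\phi(\B_{\min})\le V^{CP}+\gamma$ for every $\gamma>0$'' reads oddly since $\B_{\min}$ was obtained for a single fixed $\gamma$; the clean way (which the paper also uses) is to note that the minimum of $\phi$ over $\CC^\gamma$ must equal $V^{CP}$ because every near-infimiser of $\phi$ on $\CC^\infty$ already lies in $\CC^\gamma$.
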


\subsection{Convergence in Hausdorff Metric}
With existence of solutions settled, along with some useful properties of $\phi$ and $\CC^\gamma$, we can move on to our main goal of this section: To prove that we can arbitrarily well approximate the entirety of $\CC^0$ by $\DD^0$ or $\DD^\gamma$. To do so we will consider 
\begin{equation}\label{eq:dCD}
  d_{\HH_\HH}(\CC^0,\DD^\gamma)=\max\left(
\max_{\B'\in\CC^0}\min_{\B\in\DD^\gamma}d_\HH(\B,\B'),
\max_{\B'\in\DD^\gamma}\min_{\B\in\CC^0}d_\HH(\B,\B')
\right).  
\end{equation}
The primary goal of this section is to show that for a sequence of quadratures, $\{(\SSS_n,W_n)\}_{i=1}^\infty$, we have $\{\gamma_n\}_{i=1}^\infty$, with $\gamma_n\to0$ such that  $d_{\HH_\HH}(\CC^0,\DD^{\gamma_n}(\SSS_n,W_n))\to0$.  This would imply that for any given $\B'\in\CC^0$ there is some set $\B\in \DD^{\gamma_n}(\SSS_n,W_n) $ that approximates it, meaning that \textit{all} optimal solutions can be approximated by our discrete solutions. Conversely, for any $\B\in \DD^{\gamma_n}(\SSS_n,W_n) $, we know it will be close to some $\B\in\CC^0$ in Hausdorff distance, implying that our discrete solutions will get closer and closer to our continuous solutions.

It turns out that if we are only interested in guaranteeing that our discrete solutions are close to a continuous solution we can drop the inclusion of $\gamma_n$. In particular we will have
$\max_{\B'\in\DD^0(\SSS_n,W_n)}\min_{\B\in\CC^0}d_\HH(\B,\B') \to 0.$
Furthermore, if the continuous problem \eqref{cpprob} has a unique solution, then $d_{\HH_\HH}(\CC^0,\DD^{0}(\SSS_n,W_n))\to0$.

However, in order to even consider this distance we will first need to guarantee that $\DD^\gamma$ is a compact set in the Hausdorff metric.
\begin{cor}\label{cor:Dcompact}
    Assume that $ \{(u_i,w_i)\}_{i=1}^m\subset \SSS\times [0,\infty) $ is an $\epsilon$-accurate quadrature with $\delta \coloneqq \disp(\SSS)$ satisfying $\epsilon,\delta \le \frac{1}{10\sqrt N}$. We then have that $\DD^\gamma$ is compact in $d_\HH$.
    \begin{proof}
        We first note that \Cref{lem:Bmax<CmaxK} yields $\DD^\gamma\subset F(K_\gamma')$ for
        $$K_\gamma' = \{x\in\R^N:\|x\|\leq 12\sqrt N(\|C\|_\infty+\gamma)\}.$$
        By defining
        $$\psi(\B)=\sum_{i=1}^m w_i B(\B,u_i),$$
        we can then repeat the arguments of \Cref{thm:CP_sol_exist} with $K_\gamma',\psi$ replacing $K_\gamma,\phi$, which proves the desired result.
    \end{proof}
\end{cor}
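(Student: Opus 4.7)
The plan is to mimic the compactness argument from \Cref{thm:CP_sol_exist}, replacing the integral-based objective $\phi$ with the finite-sum objective $\psi(\B) = \sum_{i=1}^m w_i B(\B, u_i)$ and the ambient ball $K_\gamma$ with the larger ball $K_\gamma' = \{x\in\R^N : \|x\| \le 12\sqrt{N}(\|C\|_\infty + \gamma)\}$. First I would invoke \Cref{lem:Bmax<CmaxK}, whose hypotheses are exactly those assumed here, to conclude that every $\B \in \DD^\gamma$ is contained in $K_\gamma'$, so $\DD^\gamma \subset F(K_\gamma')$. By \Cref{prop:haus_properties} the space $(F(K_\gamma'), d_\HH)$ is itself compact, so it suffices to show that $\DD^\gamma$ is a closed subset.

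To verify closedness, I would take an arbitrary convergent sequence $\{\B_n\}_{n=1}^\infty \subset \DD^\gamma$ with Hausdorff limit $\B \in F(K_\gamma')$ and check the three defining properties of $\DD^\gamma$. Convexity of $\B$ follows directly from \Cref{prop:haus_properties}, and compactness is automatic from membership in $F(K_\gamma')$. For the constraints $B(\B, u_i) \ge C(u_i)$ for each $u_i \in \SSS$, I would use the Lipschitz continuity of $\B \mapsto B(\B, u)$ in $d_\HH$ with constant $1$, which was established inside the proof of \Cref{thm:CP_sol_exist}: since $B(\B_n, u_i) \to B(\B, u_i)$ and each $B(\B_n, u_i) \ge C(u_i)$, the limit inherits the inequality.

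For the final property $\psi(\B) \le V^{DP} + \gamma$, the argument is simpler than in the continuous case because $\psi$ is a finite weighted sum: by the same pointwise continuity, $\psi(\B_n) \to \psi(\B)$ without any need for a dominated convergence step, and passing to the limit in $\psi(\B_n) \le V^{DP} + \gamma$ yields the desired bound. The main (and in fact only) obstacle is making sure the hypotheses $\epsilon, \delta \le \tfrac{1}{10\sqrt{N}}$ really are used in the right place — they are invoked solely to apply \Cref{lem:Bmax<CmaxK} and thus secure the containment in $F(K_\gamma')$; everything after that is purely topological and does not require non-emptiness (which would require the separate observation that the feasible point $\B = \|C\|_\infty S^{N-1}$, or the trivial polytope constructed before \Cref{prop:Bstar}, lies in $\DD^\gamma$, but compactness holds vacuously if $\DD^\gamma$ is empty).
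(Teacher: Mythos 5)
Your proposal is correct and takes essentially the same route as the paper: the paper's proof literally says to repeat the arguments of \Cref{thm:CP_sol_exist} with $K_\gamma'$ and $\psi$ in place of $K_\gamma$ and $\phi$, and you have simply unfolded that instruction, checking closedness of $\DD^\gamma$ inside the compact metric space $F(K_\gamma')$ by passing convexity, the finitely many outreach constraints, and the bound $\psi(\B)\le V^{DP}+\gamma$ to the Hausdorff limit. Your observation that dominated convergence is no longer needed because $\psi$ is a finite weighted sum, and that the hypotheses on $\epsilon,\delta$ serve only to trigger \Cref{lem:Bmax<CmaxK}, are both accurate clarifications of what the paper's terse ``repeat the arguments'' leaves implicit.
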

With this established we know that \eqref{eq:dCD} is well defined, and what remains is finding ways to bound it. We remark that $\max_{\B'\in\CC^0}\min_{\B\in\DD^\gamma}d_\HH(\B,\B')$ in \eqref{eq:dCD} can be bounded by considering the triangular inequality
\begin{equation}\label{eq:triangle}
  d_\HH(\B,\B')\leq d_\HH(\B,\B^e) + d_\HH(\B^e,\B'),  
\end{equation}
where $\B^e=\bigcap_{ v \in S^{N-1} } \Pi^-\left( v,B(\B,v) + e \right)$ for some appropriate $e$.

The first term of the right side is easily dealt with by extending the results of \Cref{lem:contour_extension} to the setting of $d_{\HH}$.

\begin{lemm}\label{lem:contour_extension_V2}
    Let  $ \B $ be a convex and compact set and define
    $$\B^e=\bigcap_{ v \in S^{N-1} } \Pi^-\left( v,B(\B,v) + e \right).$$
    We then have that $d_\HH(\B,\B^e)=e$.
    \begin{proof}
        Consider the alternative construction 
        $$\widehat\B=\{ x\in\R^N: d(x,\B)\leq e \}. $$
        We immediately see that $\widehat\B$ is a compact convex set satisfying $d_\HH(\B,\widehat \B)=e$, so all we need is to show $\B^e=\widehat\B$.

        To prove this, we note that for all $x\in\widehat\B$ we can decompose $x=b+c$ where $b\in\B$ and $\|c\|\leq e$. Conversely, if $b\in\B$ and $\|c\|\leq e$ we have $b+c\in\widehat\B$. Using this we get, for any $u\in S^{N-1}$, that
        \begin{align*}
            B(\widehat\B,u)
            &=\max \{ \langle x,u\rangle: x\in\widehat\B\}\\
            &=\max \{ \langle b,u\rangle +\langle c,u\rangle: b\in\B,\, \|c\|\leq e\}\\
            &=B(\B,u)+e.
        \end{align*}
        We may also recall from \cref{lem:contour_extension} that $B(\B^e,u)=B(\B,u)+e=B(\widehat\B,u)$, for all $u\in S^{N-1} $. This implies that $\B^e=\widehat\B$ by \Cref{prop:support_rep}, which guarantees uniqueness of representation by $B$.

    \end{proof}
\end{lemm}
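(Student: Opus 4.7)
The plan is to show that $\B^e$ coincides with the classical $e$-offset (Minkowski sum with a closed ball) $\widehat\B := \{x\in\R^N : d(x,\B)\le e\} = \B + e\bar B_1$, where $\bar B_1$ is the closed unit ball. Since Hausdorff distance for the offset of a compact set is trivially $e$, this reduces the problem to proving the set equality $\B^e = \widehat\B$. My intended tool for this equality is the uniqueness statement in \Cref{prop:support_rep}: any compact convex set is determined by its support function, so it suffices to check that $\B^e$ and $\widehat\B$ have the same values of $B(\cdot,u)$ for every $u\in S^{N-1}$.

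First, I would verify that $\widehat\B$ is compact and convex (it is the Minkowski sum of two compact convex sets) and that $d_\HH(\B,\widehat\B)=e$. The outer inequality $d_\HH(\B,\widehat\B)\le e$ is immediate from the definition; the matching lower bound comes from picking any boundary direction of $\B$ and moving distance $e$ outward. Next, I would compute the support function
$$
B(\widehat\B,u)=\sup\{\langle b+c,u\rangle : b\in\B,\ \|c\|\le e\}=B(\B,u)+\sup_{\|c\|\le e}\langle c,u\rangle=B(\B,u)+e,
$$
using that the supremum of $\langle c,u\rangle$ over the closed ball is attained at $c=eu$.

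On the side of $\B^e$, I would apply \Cref{lem:contour_extension} with $\SSS=S^{N-1}$, which directly gives $B(\B^e,v)=B(\B,v)+e$ for every $v\in S^{N-1}$. Combined with the previous computation, this yields $B(\B^e,u)=B(\widehat\B,u)$ for all $u\in S^{N-1}$. Since both $\B^e$ and $\widehat\B$ are nonempty, convex, and compact ($\B^e$ is bounded because its support function is, and closed and convex as an intersection of half-spaces), the uniqueness in \Cref{prop:support_rep} gives $\B^e=\widehat\B$, and the lemma follows.

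The routine step requiring mild care is confirming compactness of $\B^e$ before invoking \Cref{prop:support_rep}; the support-function identity makes this automatic, but one must avoid a circular appeal. Apart from that, the argument is a straightforward transfer of the offset identity to the support-function picture, and I do not anticipate a genuine obstacle.
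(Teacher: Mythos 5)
Your proposal is correct and follows essentially the same route as the paper: introduce the $e$-offset $\widehat\B=\{x:d(x,\B)\le e\}$, note $d_\HH(\B,\widehat\B)=e$, compute $B(\widehat\B,u)=B(\B,u)+e$, invoke \Cref{lem:contour_extension} for $B(\B^e,u)=B(\B,u)+e$, and conclude $\B^e=\widehat\B$ via the uniqueness in \Cref{prop:support_rep}. The only (small) addition is your explicit remark that compactness of $\B^e$ should be checked before applying \Cref{prop:support_rep}, a point the paper leaves implicit; this is a fair bit of extra care rather than a different argument.
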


The second term of the right side of \eqref{eq:triangle} requires two steps to control. By \Cref{thm:errorbound_optval}, we can bound the mean width of $\B^e$ by the following result. This result is almost identical to \Cref{thm:DPepsinCPeps_discrete}. In that result, however, we consider the set $\B^e$ to be an inflation of $\B$ only in directions $u\in\SSS$. In the following result we consider an inflation of $\B$ in all directions $u\in S^{N-1}$.

\begin{prop}\label{prop:DPepsinCPeps_continuous}
  Fix some $\epsilon$-accurate quadrature $\{(u_i,w_i)\}_{i=1}^m$ and assume that $\SSS=\{u_i\}_{i=1}^m$ with $\delta=\disp(\SSS)$ satisfies $\epsilon,\delta \le \frac{1}{10\sqrt N}$. Let $e =\delta \big(L_C + 12\sqrt N(\|C\|_\infty+\gamma) \big)$ and define $\B^e =\bigcap_{ u \in S^{N-1} } \Pi^-\left( u,B(\B,u) + e) \right)$ for some $\B\in\DD^\infty$.
 Note here that we consider $u \in S^{N-1}$ and not $u \in \SSS$ as in \Cref{thm:DPepsinCPeps_discrete}.
 
  Then, for any $\B\in\DD^\gamma$, $\gamma \ge 0$, we have that 
  $$ \B^e \in \CC^{\gamma+\beta}, $$
  where $\beta = e + 32\sqrt N(\|C\|_\infty+\gamma)\, \epsilon$.

    \begin{proof}
        We know from \Cref{lemm:errorbound_geqC,lem:contour_extension,lem:Bmax<CmaxK} that $\B^e$ satisfies 
        \begin{align*}
            B(\B^e,u) 
            &= B(\B,u) + \delta(L_C+12\sqrt N(\|C\|_\infty+\gamma))\\
            &\geq  \Big( C(u)-e \Big) +\delta(L_C+12\sqrt N(\|C\|_\infty+\gamma))\\
            &=C(u)
        \end{align*}
        for all $u\in S^{N-1}$, which implies $\B^e\in\CC^\infty$. Furthermore, we have by the definition of $\epsilon$-accurate quadratures, \Cref{prop:B_lipz} and \Cref{lem:Bmax<CmaxK} that
        $$\left|\sum_{i=1}^m w_iB(\B,u_i)-\int_{S^{N-1}}  B(\B,u) d\sigma(u)\right|\leq 24\sqrt N(\|C\|_\infty+\gamma) \epsilon.$$
        
        Combining these facts with \Cref{thm:errorbound_optval} then yields
        \begin{align*}
            \int_{S^{N-1}}B(\B^e,u)d\sigma(u)
            &= \int_{S^{N-1}}B(\B,u)d\sigma(u) + e\\
            &\leq  \sum_{i=1}^m w_iB(\B,u_i) + 24\sqrt N(\|C\|_\infty+\gamma) \epsilon + e\\
            &\leq V^{DP}+\gamma + 24\sqrt N(\|C\|_\infty+\gamma) \epsilon + e\\
            &\leq V^{CP}+8\sqrt N\|C\|_\infty\epsilon+\gamma+24\sqrt N(\|C\|_\infty+\gamma) \epsilon + e\\
            &\le V^{CP}+\gamma+\beta,
        \end{align*}
        which implies $\B^e \in \CC^{\gamma+\beta}$.
    \end{proof}
\end{prop}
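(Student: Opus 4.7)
The plan is to mirror the proof of \Cref{thm:DPepsinCPeps_discrete}, but take advantage of the fact that we are now inflating by $e$ in \emph{every} direction $u \in S^{N-1}$, not merely over $u \in \SSS$. This cleanup has two consequences: the validity check becomes immediate, and the integral of $B(\B^e,\cdot)$ can be related exactly (rather than only approximately) to the integral of $B(\B,\cdot)$. The overall structure is (i) verify $\B^e \in \CC^\infty$, then (ii) bound $\int_{S^{N-1}} B(\B^e,u)\,d\sigma(u)$ above by $V^{CP} + \gamma + \beta$.

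First I would verify feasibility. For every $u \in S^{N-1}$, applying \Cref{lem:contour_extension} with $\SSS$ replaced by $S^{N-1}$ (or equivalently quoting \Cref{lem:contour_extension_V2}) gives the exact identity $B(\B^e,u) = B(\B,u) + e$. Since $\B \in \DD^\gamma \subset \DD^\infty$, \Cref{lemm:errorbound_geqC} yields $B(\B,u) \ge C(u) - \delta(L_C + R)$ with $R = \max_{p \in \B}\|p\|$, and \Cref{lem:Bmax<CmaxK} bounds $R \le 12\sqrt N(\|C\|_\infty + \gamma)$. The chosen $e = \delta(L_C + 12\sqrt N(\|C\|_\infty + \gamma))$ is precisely what is needed to cancel this deficit, giving $B(\B^e,u) \ge C(u)$ for all $u \in S^{N-1}$, hence $\B^e \in \CC^\infty$.

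For the mean width estimate, I would use the exact identity from step one to write
\begin{equation*}
\int_{S^{N-1}} B(\B^e,u)\,d\sigma(u) = \int_{S^{N-1}} B(\B,u)\,d\sigma(u) + e.
\end{equation*}
To bound the remaining integral, I would invoke the $\epsilon$-accurate quadrature. By \Cref{prop:B_lipz} and \Cref{lem:Bmax<CmaxK}, $B(\B,\cdot)$ is $R$-Lipschitz with $\|B(\B,\cdot)\|_\infty \le R \le 12\sqrt N(\|C\|_\infty+\gamma)$, so \eqref{eq:epsilon_accurate_def} gives a quadrature error of at most $24\sqrt N(\|C\|_\infty+\gamma)\,\epsilon$. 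Combined with $\sum_i w_i B(\B,u_i) \le V^{DP}+\gamma$ (since $\B \in \DD^\gamma$) and $V^{DP} \le V^{CP} + 8\sqrt N\|C\|_\infty\epsilon$ (from \Cref{thm:errorbound_optval}), I obtain
\begin{equation*}
\int_{S^{N-1}} B(\B^e,u)\,d\sigma(u) \le V^{CP} + \gamma + e + 8\sqrt N \|C\|_\infty\epsilon + 24\sqrt N(\|C\|_\infty+\gamma)\epsilon.
\end{equation*}
Bounding $8\sqrt N\|C\|_\infty\epsilon + 24\sqrt N(\|C\|_\infty+\gamma)\epsilon \le 32\sqrt N(\|C\|_\infty+\gamma)\epsilon$ collapses the second line to the stated $\beta$, and together with $\B^e \in \CC^\infty$ this gives $\B^e \in \CC^{\gamma+\beta}$.

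There is no real obstacle here; the argument is essentially a cleaner variant of \Cref{thm:DPepsinCPeps_discrete}. The only thing to be careful about is not double-counting the inflation: because $B(\B^e,u) = B(\B,u) + e$ holds identically on $S^{N-1}$, no additional quadrature error on $B(\B^e,\cdot)$ is needed, which is why the coefficient of $e$ in $\beta$ is $1$ rather than $4$ as in the discrete case.
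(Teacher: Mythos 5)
Your proof is correct and follows essentially the same route as the paper's: verify $\B^e\in\CC^\infty$ via \Cref{lemm:errorbound_geqC}, \Cref{lem:contour_extension}, and \Cref{lem:Bmax<CmaxK}; use the exact identity $B(\B^e,\cdot)=B(\B,\cdot)+e$ to reduce to the mean width of $\B$; then chain the quadrature error bound, $\B\in\DD^\gamma$, and \Cref{thm:errorbound_optval}. Your closing remark on why the coefficient of $e$ is $1$ here versus $4$ in \Cref{thm:DPepsinCPeps_discrete} is a nice observation, but the underlying argument matches the paper's.
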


With this result, for any $\B'\in\CC^0$ and $\B^e$ defined as in \Cref{prop:DPepsinCPeps_continuous}, we have $d_\HH(\B',\B^e)\leq d_{\HH_\HH}(\CC^\gamma,\CC^0)$. This can be controlled by the following.

\begin{prop}\label{prop:CcloseC}
    Consider the Hausdorff metric on the set $F(\CC^\infty)$, i.e.\ the set of all compact subsets of $\CC^\infty$ where $\CC^\infty$ itself is also equipped with a Hausdorff metric. We then have that 
    $$\lim_{\gamma \to 0}d_{\HH_\HH}(\CC^\gamma,\CC^0)=0.$$

    \begin{proof}
        We first recall from \Cref{thm:CP_sol_exist} that $\CC^\gamma$ is compact and that $\phi:\CC^\infty \to [V^{CP},\infty)$ defined by $\phi(\B)=\int_{S^{N-1}}B(\B,u)d\sigma(u)$ is a Lipschitz continuous function with Lipschitz constant $1$. Also, note that since $\CC^0\subset\CC^\gamma$, we have
        $$d_{\HH_\HH}(\CC^\gamma,\CC^0)=\max_{\B\in\CC^\gamma}\min_{\B'\in\CC^0}d_\HH(\B,\B').$$

        Next, pick some $\gamma,\beta>0$. Since $\CC^\gamma$ is compact, we can consider a finite open covering $\{\U_i\}_{i=1}^I$ such that $\diam(\U_i)\leq\beta$ for all $i=1,\dots,I$. Note that by Lipschitz continuity of $\phi$, we have for all $i=1,\dots,I$ that $\phi(\B)\leq \gamma+\beta$ for any $\B\in\U_i$. As a consequence, if we then define $\overline{\U_i}$ as the closure of $\U_i$ in the Hausdorff metric, we know for all $i=1,\dots,I$, that $\overline{\U_i}$ must be a closed subset of the compact set $\CC^{\gamma+\beta}$, which implies that $\overline{\U_i}$ is compact as well.

        This implies that $\phi$ must attain a minimum on $\overline{\U}_i$ which lets us define $\{\psi_i\}_{i=1}^I$ by $\psi_i=\min_{\B\in\U_i}\phi(\B)$. We then separate the sets that overlap with $\CC^0$ by defining $ \I=\{i=1,\dots,I:\psi_i=V^{CP}\} $ and $\J=\{i=1,\dots,I:\psi_i > V^{CP}\} $. If we then define $\psi_{min}=\min_{j\in \J}\psi_j>V^{CP}$, we can pick some $\alpha\in \big(0,\min(\gamma,\psi_{\min}-V^{CP})\big)$ which implies that $\phi(\B)\geq \psi_{min} > V^{CP}+\alpha $ for any $\B\in\cup_{j\in \J}\overline{\U}_j$. As a consequence, we have
        $$\bigcup_{j\in \J}\overline{\U}_j \cap \CC^{\alpha}=\emptyset.$$
        However, since $\{\U_i\}_{i=1}^I$ is a covering of $\CC^\gamma \supset \CC^{\alpha}$ we must have that
        $$\CC^{\alpha} \subseteq \bigcup_{i\in \I}\overline{\U}_i.$$

        This implies that for every $\B\in\CC^{\alpha}$ we have $\B\in \overline{\U}_{i'}$ for some $i'\in\I$. Since $\psi_{i'}=V^{CP}$, there is some $\B'\in \overline{\U}_{i'}$ such that $\phi(\B')=V^{CP}$ which implies $\B'\in \CC^0$. By $\diam(\overline{\U}_i)=\diam(\U_i)\leq\beta$ we then finally get that $d_\HH(\B,\B')\leq\beta$, but since $\B$ was arbitrary we also have $d_{\HH_\HH}(\CC^0,\CC^{\alpha})\leq\beta$.

        In summary, we see that for every $\beta>0$ there exists some $\alpha>0 $ such that $d_{\HH_\HH}(\CC^0,\CC^{\alpha})\leq\beta$,  which implies $\lim_{\gamma \to 0}d_{\HH_\HH}(\CC^0,\CC^\gamma)=0$ by monotonicity.

    \end{proof}
\end{prop}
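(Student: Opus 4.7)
The plan is to prove this by contradiction, using compactness of $\CC^\gamma$ and Lipschitz continuity of $\phi(\B) = \int_{S^{N-1}} B(\B,u)\,d\sigma(u)$ established in \Cref{thm:CP_sol_exist}. First observe the simplification: since $\CC^0 \subseteq \CC^\gamma$, every $\B \in \CC^0$ satisfies $\min_{\B' \in \CC^0} d_\HH(\B,\B') = 0$, so
\[
d_{\HH_\HH}(\CC^\gamma, \CC^0) = \max_{\B \in \CC^\gamma} \min_{\B' \in \CC^0} d_\HH(\B,\B').
\]
Moreover, this quantity is monotone non-decreasing in $\gamma$ (since $\CC^{\gamma_1} \subseteq \CC^{\gamma_2}$ when $\gamma_1 \le \gamma_2$), so it suffices to show that its limit along some sequence $\gamma_n \downarrow 0$ vanishes.

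Suppose toward contradiction that $\liminf_{\gamma \to 0} d_{\HH_\HH}(\CC^\gamma, \CC^0) \ge \eta$ for some $\eta > 0$. By monotonicity, this gives a sequence $\gamma_n \downarrow 0$ with $\gamma_n \le 1$ and sets $\B_n \in \CC^{\gamma_n}$ satisfying $\min_{\B' \in \CC^0} d_\HH(\B_n, \B') \ge \eta$ for every $n$. Since $\B_n \in \CC^{\gamma_n} \subseteq \CC^1$ and $\CC^1$ is compact in the Hausdorff metric by \Cref{thm:CP_sol_exist}, we can extract a subsequence (still denoted $\B_n$) converging to some limit $\B_* \in \CC^1 \subseteq \CC^\infty$.

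Now apply Lipschitz continuity of $\phi$: because $\phi(\B_n) \le V^{CP} + \gamma_n$ and $\gamma_n \to 0$, passing to the limit yields $\phi(\B_*) \le V^{CP}$, i.e.\ $\B_* \in \CC^0$. But then $d_\HH(\B_n, \B_*) \to 0$, contradicting the standing lower bound $\min_{\B' \in \CC^0} d_\HH(\B_n, \B') \ge \eta$. This contradiction closes the argument.

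The main obstacle is really just bookkeeping: one needs to confirm that $\B_*$ obtained via compactness truly lies in $\CC^\infty$ (which follows from closedness of $\CC^\infty$ in $F(\R^N)$, shown inside the proof of \Cref{thm:CP_sol_exist}) and that the $\phi$-bound passes to the limit. The covering-based approach used in the paper would be an alternative, but the sequential compactness argument above exploits the already-established compactness of $\CC^\gamma$ and continuity of $\phi$ more directly and avoids separating covers into those intersecting $\CC^0$ and those that do not.
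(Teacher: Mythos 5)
Your proof is correct and takes a genuinely different route from the paper. You argue by contradiction via sequential compactness: assuming the limit is positive, you extract a convergent subsequence $\B_n \to \B_*$ in $\CC^1$, use the Lipschitz continuity of $\phi$ to pass $\phi(\B_n) \le V^{CP} + \gamma_n$ to the limit and conclude $\B_* \in \CC^0$, contradicting the supposed uniform separation from $\CC^0$. The paper instead runs a direct $\epsilon$--$\delta$ argument with a finite open cover $\{\U_i\}$ of $\CC^\gamma$ of small diameter, splits the cover indices into those on which $\min \phi = V^{CP}$ and those on which $\min \phi > V^{CP}$, and exhibits an explicit $\alpha > 0$ making $\CC^\alpha$ fall entirely inside the $\CC^0$-touching pieces. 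Both hinge on the same two facts (compactness of $\CC^\gamma$, continuity of $\phi$) from \Cref{thm:CP_sol_exist}, so neither approach needs new ingredients; your version is shorter and more transparent because it converts compactness directly into sequential compactness, while the paper's version is more constructive in spirit, effectively producing a quantitative $\alpha(\beta)$ (though the bound depends on the chosen cover and is not made explicit). One small bookkeeping point you handle correctly but should keep in mind: to extract the sequence $\B_n$ with $\min_{\B' \in \CC^0} d_\HH(\B_n,\B') \ge \eta$ you are implicitly using that the max over $\B \in \CC^{\gamma_n}$ is attained, which follows from compactness of $\CC^{\gamma_n}$ and $1$-Lipschitz continuity of the distance-to-$\CC^0$ map; and you need the monotonicity in $\gamma$ (which you state) to conclude from $\liminf$ to the full limit.
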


    It turns out that the triangle inequality of \eqref{eq:triangle} is not sufficient to control \eqref{eq:dCD}. In particular, we also need to handle $\max_{\B'\in\DD^\gamma}\min_{\B\in\CC^0}d_\HH(\B,\B')$, which equals 0 if $\CC^0 \subset \DD^\gamma$. Fortunately, a $\gamma$ such that this is satisfied can be attained.

\begin{lemm}\label{prop:CPinDPeps}
  Fix some $\epsilon$-accurate quadrature $\{(u_i,w_i)\}_{i=1}^m$ and assume that $\SSS=\{u_i\}_{i=1}^m$ with $\delta=\disp(\SSS)$ satisfies $\epsilon,\delta \le \frac{1}{10\sqrt N}$.
    
    Then $ \CC^0 \subset \DD^\gamma $ for $\gamma=16\sqrt N\|C\|_\infty(2\epsilon+\delta)+L_C\delta$.

    \begin{proof}
        Take any $\B\in\CC^0$, we then have, for all $u\in\SSS$, that $B(\B,u)\geq C(u)$, which implies $\B\in\DD^\infty$. Furthermore, \Cref{prop:B_lipz} and \Cref{cor:Bmax<CmaxK_cont} give us
        $$\left|V^{CP}-\sum_{i=1}^m w_i B(\B,u_i) \right|\leq 8\sqrt N\|C\|_\infty\epsilon.$$
        
        Combining these facts with \Cref{thm:errorbound_optval} yields
        \begin{align*}
            \sum_{i=1}^m w_i B(\B,u_i)
            &\leq V^{CP}+8\sqrt N\|C\|_\infty\epsilon\\
            &\leq V^{DP}+12\sqrt N\|C\|_\infty(2\epsilon+\delta)+L_C\delta+8\sqrt N\|C\|_\infty\epsilon\\
            &\le V^{DP}+\gamma,
        \end{align*}
        which implies $\B \in \DD^{\gamma}$.
    \end{proof}
\end{lemm}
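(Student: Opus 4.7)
The plan is to take an arbitrary $\B\in\CC^0$ and verify two things: first that $\B$ lies in $\DD^\infty$ (i.e.\ it satisfies the discrete outreach constraints), and second that its discrete objective $\sum_{i=1}^m w_i B(\B,u_i)$ exceeds $V^{DP}$ by no more than the stated $\gamma$. The first point is immediate: since $B(\B,u)\geq C(u)$ holds for every $u\in S^{N-1}$, it holds in particular for every $u_i\in\SSS$, so $\B\in\DD^\infty$ by definition.

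For the main quantitative bound, I would chain three ingredients. First, I would use the $\epsilon$-accuracy of the quadrature applied to the function $u\mapsto B(\B,u)$. By \Cref{prop:B_lipz} this function is both $R$-Lipschitz and bounded in absolute value by $R$, where $R=\max_{p\in\B}\|p\|$. By \Cref{cor:Bmax<CmaxK_cont}, since $\B\in\CC^0$ we have $R\leq 4\sqrt N\|C\|_\infty$, so the integration error is at most $\epsilon(L+\|f\|_\infty)\leq 2R\epsilon\leq 8\sqrt N\|C\|_\infty\epsilon$. Since $\B\in\CC^0$, the integral in question equals $V^{CP}$, giving
\[
\sum_{i=1}^m w_i B(\B,u_i)\;\leq\; V^{CP}+8\sqrt N\|C\|_\infty\epsilon.
\]

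Second, I would invoke \Cref{thm:errorbound_optval} to bound $V^{CP}$ in terms of $V^{DP}$: that theorem gives $V^{CP}\leq V^{DP}+12\sqrt N\|C\|_\infty(2\epsilon+\delta)+L_C\delta$. Combining the two inequalities yields
\[
\sum_{i=1}^m w_i B(\B,u_i)\;\leq\; V^{DP}+12\sqrt N\|C\|_\infty(2\epsilon+\delta)+L_C\delta+8\sqrt N\|C\|_\infty\epsilon,
\]
and the sum of the constants on the right simplifies to at most $16\sqrt N\|C\|_\infty(2\epsilon+\delta)+L_C\delta=\gamma$, since $24\sqrt N\|C\|_\infty\epsilon+12\sqrt N\|C\|_\infty\delta\leq 32\sqrt N\|C\|_\infty\epsilon+16\sqrt N\|C\|_\infty\delta$.

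There is no real obstacle here; everything is an application of previously established results. The only mildly delicate point is making sure the bound on $R$ comes from the continuous version \Cref{cor:Bmax<CmaxK_cont} (valid because $\B\in\CC^0$) rather than the discrete one, and confirming that the arithmetic combining the error terms indeed fits inside the stated $\gamma$.
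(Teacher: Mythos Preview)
Your proposal is correct and follows essentially the same approach as the paper's proof: establish $\B\in\DD^\infty$ trivially, bound the quadrature error via \Cref{prop:B_lipz} and \Cref{cor:Bmax<CmaxK_cont}, then apply \Cref{thm:errorbound_optval} and combine. You even spell out the final arithmetic more explicitly than the paper does.
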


    With all our technical results, we are ready to prove the second main result of this section.
\begin{thm}\label{thm:dDnC_to0}
    Consider a sequence, $\{(\SSS_n,W_n)\}_{i=1}^\infty$, with $\SSS_n=(u_i^n)_{i=1}^{I_n}$ and $W_n=(w_i^n)_{i=1}^{I_n}$ such that $ \{(u_i^n,w_i^n)\}_{i=1}^{I_n} \subset \SSS_n \times W_n$ forms an $\epsilon_n$-accurate quadrature. Further assume that $\epsilon_n$ and $\delta_n=\disp(\SSS_n)$ both converge to 0 and satisfy $\epsilon_n,\delta_n \le \frac{1}{10\sqrt N}$ for all $n\in\N^+$. For ease of notation, we will denote $\DD^\gamma(\SSS_n,W_n)$ by $\DD^\gamma_n$ for any $\gamma\geq 0$.
    
    If we define
    $$ \gamma_n=16\sqrt N \|C\|_\infty(2\epsilon_n+\delta_n)+L_C\delta_n,$$
    we then have that 
    $$\lim_{n\to\infty}d_{\HH_\HH}\Big(\DD^{\gamma_n}_n,\CC^0\Big)=0,$$
    and
    $$\lim_{n\to\infty}\max_{\B\in\DD^{0}_n}\min_{\B'\in\CC^0}d_\HH(\B,\B')=0.$$

    Furthermore, if there is a unique optimal solution to the continuous problem, i.e.\ $\CC^0=\{\B^\text{CP}\}$ for some $\B^\text{CP}\subset\R^N$, we have
    $$\lim_{n\to\infty}d_{\HH_\HH}\Big(\DD^{0}_n,\CC^0\Big)=0.$$
    \begin{proof}
        We first note that \Cref{prop:CPinDPeps} implies $\CC^0\subset \DD^{\gamma_n}_n$, which yields
        $$d_{\HH_\HH}\Big(\DD^{\gamma_n}_n,\CC^0\Big)=\max_{\B\in\DD^{\gamma_n}_n}\min_{\B'\in\CC^0}d_\HH(\B,\B').$$
        
        Consider therefore any $\B\in\DD^{\gamma_n}_n$. From \Cref{lemm:errorbound_geqC,lem:Bmax<CmaxK,lem:contour_extension_V2} along with \Cref{prop:DPepsinCPeps_continuous}, we get that if we define $\B^{e_n}$ by
        $$\B^{e_n}=\bigcap_{ v \in S^{N-1} } \Pi^-\left( v,B(\B,v) + e_n \right),$$
        where $e_n=\delta_n(L_C+12 \sqrt N (\|C\|_\infty+\gamma_n))$, we have that $d_\HH(\B,\B^{e_n}) = e_n $ and that $\B^{e_n}\in\CC^{\gamma_n+\beta_n}$ for 
        $$\beta_n= 32 \sqrt N (\|C\|_\infty + \gamma_n)\epsilon_n+e_n.$$
        
        As a consequence, we have by the triangle inequality that 
        \begin{align*}
            \min_{\B'\in\CC^0}d_\HH(\B,\B')
            &\leq \min_{\B'\in\CC^0} d_\HH(\B^{e_n},\B') + d_\HH(\B,\B^{e_n})\\
            &\leq  \max_{\B''\in\CC^{\gamma_n+\beta_n}}\min_{\B'\in\CC^0} d_\HH(\B'',\B') + e_n\\
            &=  d_\HH(\CC^{\gamma_n+\beta_n},\CC^0) + e_n.
        \end{align*}
        By noting that $\lim_{n\to\infty}\gamma_n+\beta_n = 0$, we have $d_{\HH_\HH}(\CC^{\gamma_n+\beta_n},\CC^0)\to 0$ by \Cref{prop:CcloseC}. This further yields
        $$\max_{\B\in\DD_n^{\gamma_n}}\min_{\B'\in\CC^0}d_\HH(\B,\B')\to 0,$$
        which yields the first part of the proof. Furthermore, the second equation follows from 
        \begin{align*}
        \max_{\B\in\DD^{0}_n}\min_{\B'\in\CC^0}d_\HH(\B,\B')
        &\leq \max_{\B\in\DD^{\gamma_n}_n}\min_{\B'\in\CC^0}d_\HH(\B,\B')
        \to0
        \end{align*}
        
        Similarly, if $\CC^0=\{\B^\text{CP}\}$,
        \begin{align*}
            d_{\HH_\HH}\left(\DD^{0}_n,\CC^0\right)
            &=\max\left(\max_{\B \in \DD^{0}_n}\min_{\B'\in \CC^0}d_\HH\left(\B,\B'\right), \max_{\B' \in \CC^{0}}\min_{\B \in \DD^{0}_n}d_\HH\left(\B,\B'\right)\right)\\
            &=\max\left(\max_{\B \in \DD^{0}_n}d_\HH\left(\B,\B^\text{CP}\right), \min_{\B \in \DD^{0}_n}d_\HH\left(\B,\B^\text{CP}\right)\right)\\
            &=\max_{\B\in\DD^{0}_n}d_\HH(\B,\B^\text{CP}) \\
            &=\max_{\B\in\DD^{0}_n}\min_{\B'\in\CC^0}d_\HH(\B,\B') 
            \to 0,
        \end{align*}
        which completes the proof.
    \end{proof}
\end{thm}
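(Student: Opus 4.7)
The plan is to split the Hausdorff$^2$ distance into its two natural max-min directions and attack each with the tools already developed. The direction $\max_{\B'\in\CC^0}\min_{\B\in\DD^{\gamma_n}_n} d_\HH(\B,\B')$ is essentially free: \Cref{prop:CPinDPeps} gives $\CC^0\subset\DD^{\gamma_n}_n$ outright (this is the very reason $\gamma_n$ was chosen with those constants), so the ``min'' is attained at $\B=\B'$ and the inner quantity is zero. Hence $d_{\HH_\HH}(\DD^{\gamma_n}_n,\CC^0)$ collapses to the other direction $\max_{\B\in\DD^{\gamma_n}_n}\min_{\B'\in\CC^0} d_\HH(\B,\B')$.

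To handle that remaining quantity, I would fix an arbitrary $\B\in\DD^{\gamma_n}_n$ and inflate it to a set $\B^{e_n}$ that sits inside $\CC^\infty$. The natural choice is $e_n=\delta_n(L_C+12\sqrt N(\|C\|_\infty+\gamma_n))$, because then \Cref{lemm:errorbound_geqC} combined with the universal radius bound in \Cref{lem:Bmax<CmaxK} shows $B(\B,u)\ge C(u)-e_n$ on all of $S^{N-1}$, and \Cref{lem:contour_extension} upgrades this to $B(\B^{e_n},u)\ge C(u)$, i.e.\ $\B^{e_n}\in\CC^\infty$. Moreover, \Cref{prop:DPepsinCPeps_continuous} gives the quantitative improvement $\B^{e_n}\in\CC^{\gamma_n+\beta_n}$ for $\beta_n = e_n+32\sqrt N(\|C\|_\infty+\gamma_n)\epsilon_n$, while \Cref{lem:contour_extension_V2} (the Hausdorff refinement) shows $d_\HH(\B,\B^{e_n})=e_n$. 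A triangle inequality then yields
\begin{align*}
  \min_{\B'\in\CC^0} d_\HH(\B,\B')
  &\le d_\HH(\B,\B^{e_n}) + \min_{\B'\in\CC^0} d_\HH(\B^{e_n},\B') \\
  &\le e_n + d_{\HH_\HH}(\CC^{\gamma_n+\beta_n},\CC^0),
\end{align*}
uniformly over $\B\in\DD^{\gamma_n}_n$. Since $\epsilon_n,\delta_n\to 0$, also $\gamma_n,\beta_n,e_n\to 0$, and the continuity result \Cref{prop:CcloseC} forces $d_{\HH_\HH}(\CC^{\gamma_n+\beta_n},\CC^0)\to 0$. This proves the first limit.

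The second limit is immediate from the inclusion $\DD^0_n\subset\DD^{\gamma_n}_n$: the max over the smaller set is bounded by the max over the larger one, which was just shown to vanish. For the uniqueness statement, if $\CC^0=\{\B^{\mathrm{CP}}\}$ then both the ``min over $\CC^0$'' and ``max over $\CC^0$'' reduce to evaluation at $\B^{\mathrm{CP}}$, so $d_{\HH_\HH}(\DD^0_n,\CC^0)=\max_{\B\in\DD^0_n} d_\HH(\B,\B^{\mathrm{CP}})$, which coincides with the quantity already shown to go to zero in the second limit. The main obstacle is purely bookkeeping: lining up the constants in $\gamma_n$, $\beta_n$ and $e_n$ so that the inflated set really does land in $\CC^{\gamma_n+\beta_n}$, and so that \Cref{prop:CcloseC} can be invoked after the fact; all of the nontrivial analytic work has been pre-packaged in \Cref{prop:DPepsinCPeps_continuous,prop:CPinDPeps,prop:CcloseC,lem:contour_extension_V2}.
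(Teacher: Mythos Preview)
Your proposal is correct and follows essentially the same approach as the paper's proof: reduce via \Cref{prop:CPinDPeps} to the one-sided distance, inflate each $\B\in\DD^{\gamma_n}_n$ by the same $e_n$ to land in $\CC^{\gamma_n+\beta_n}$ via \Cref{prop:DPepsinCPeps_continuous} and \Cref{lem:contour_extension_V2}, then invoke \Cref{prop:CcloseC} after a triangle inequality. The second and third limits are dispatched exactly as in the paper, by the inclusion $\DD^0_n\subset\DD^{\gamma_n}_n$ and by collapsing both max/min directions when $\CC^0$ is a singleton.
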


This theorem tells us that the optimal solution space of the continuous problem can be effectively approximated by a near-optimal solution space of a linear program. Furthermore, if the continuous solution is unique, then the optimal discrete solution space is sufficient to approximate this solution. Lastly, \Cref{thm:dDnC_to0} also proves that all optimal solutions of our discrete problem can be made arbitrarily close to a continuous one in terms of Hausdorff distance. This complements \Cref{thm:errorbound_optval}, and provides an alternate perspective on how our discrete problem approximates the continuous optimal solutions.

\section{Minimal Valid Contours in 2-D}\label{sec:bonus2d}

In the linear program \eqref{lpprob}, we have several constraints in place to ensure that the output corresponds to a convex set. In two dimensions, however, there is a more efficient way of phrasing this property. This relates to the presence of loops in the contour as discussed in e.g.\ \cite{convcont}.

To discuss this, we consider a finite $\SSS=\{u_i\}_{i=1}^m\subset S^1$ and parameterise any $ u\in\SSS $ by the unique angle $ \theta(u)\in [0,2\pi) $ such that $ u=(\cos(\theta),\sin(\theta)) $. Using this parameterisation, we will consider $\SSS$ to be an ordered set $\SSS=(u_i)_{i=1}^m$ such that $\theta(u_i)<\theta(u_{i+1})$. Finally, we also denote $u_{m+1}=u_1$, $u_{0}=u_m$, and $\theta_{i}=\theta(u_i)$ for any $0\leq i \leq m+1$.

In \eqref{lpprob} we have defined our constraints to ensure the $B_i=B(\B,u_i)$ for some convex $\B$, to rephrase those constraints for $N=2$ we define
\begin{equation}\label{eq:BeqCapB}
    \B=\bigcap_{ i=1 }^m \Pi^-\left( u_i,B_i  \right),    
\end{equation}
and examine when $B(\B,u_i)=B_i$ holds. To do so we will denote the hyperplane $\Pi_j=\Pi(u_j,B_j)$ and the crossing point $X_j=\Pi_j\cap\Pi_{j-1}$ for all $j$.

We see from \Cref{fig:noloops} that when $B(\B,u_i)=B_i$, the hyperplane $\Pi_i$ supports $\B$. Furthermore, we can compute the length, $L_i$ of the line segment $\B\cap\Pi_i$, which equals the distance between $ X_i$ and $X_{i-1}$, by 
\begin{align}\label{eq:L_i}
		L_i=&
        \frac{B_{i+1}  -\langle u_{i+1},u_i\rangle B_i  }{\sin(\theta_{i+1}-\theta_{i})}
		+\frac{B_{i-1} -\langle u_{i-1},u_i\rangle B_i  }{\sin(\theta_{i}-\theta_{i-1})}.
\end{align}
%

The key observation comes from \Cref{fig:loops}, where the resulting contour satisfies $B(\B,u_i) < B_i$, and as such we have $\B\cap\Pi_i=\emptyset$. In particular, we note that $ X_i$ and $X_{i-1}$ switch sides. Since \Cref{eq:L_i} is based on projecting $ X_i$ and $X_{i-1}$ along $\Pi_i$, if we were to compute $L_i$ by \eqref{eq:L_i}, we would see that while it still provides the distance between $X_i$ and $X_{i-1}$, the sign of $L_i$ is now negative. This observation tells us that the condition $B(\B,u_i)=B_i$, for all $i$, is equivalent to $L_i\geq0$ for all $i$.

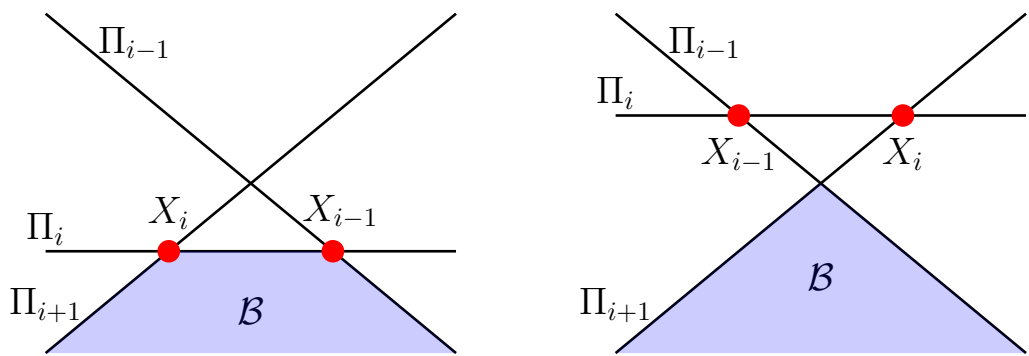
\begin{figure}[h]
	\centering
	\begin{subfigure}{.5\textwidth}
		\centering
		\begin{tikzpicture}[scale=0.9]
			
			
			\coordinate (A) at (0,0);
			\coordinate (B) at (6,5*6/6);
			\draw [name path=A--B,line width=1pt] (A) -- (B);
			
			\coordinate (C) at (6,0);
			\coordinate (D) at (6-6,5*6/6);
			\draw [name path=C--D,line width=1pt] (C) -- (D);
			
			\coordinate (E) at (6,1.5);
			\coordinate (F) at (0,1.5);
			\draw [name path=E--F,line width=1pt,label=$test$] (E) -- (F);
			
			\path [name intersections={of=A--B and E--F,by=G}];
			\path [name intersections={of=C--D and E--F,by=H}];
			\coordinate (BB) at (3,1);
			\node [fill=none,inner sep=0pt,shape = circle,label=-90:\large$\mathcal{B}$] at (BB) {};
			\draw [ draw=none, fill=blue, opacity=0.2]
			(0,0) -- (G) -- (H) -- (6,0) -- cycle ;
			
			
			\path [name intersections={of=A--B and E--F,by=G}];
			\node [fill=red,inner sep=3pt,shape = circle,label={[xshift=0cm]+90:\large$X_{i}$}] at (G) {};

			\path [name intersections={of=C--D and E--F,by=H}];
			\node [fill=red,inner sep=3pt,shape = circle,label={[xshift=+0.1cm]\large$X_{i-1}$}] at (H) {};
			
			
			\coordinate (pi1) at (0,1.15);
			\node [fill=none,inner sep=0pt,shape = circle,label=-90:\large$\Pi_{i+1}$] at (pi1) {};
			
			\coordinate (pi2) at (0,2.25);
			\node [fill=none,inner sep=0pt,shape = circle,label=-90:\large$\Pi_{i}$] at (pi2) {};
			
			\coordinate (pi3) at (1.3,5);
			\node [fill=none,inner sep=0pt,shape = circle,label=-90:\large$\Pi_{i-1}$] at (pi3) {};

		\end{tikzpicture}
		
		\caption{Good Case: $B(\B,u_i)=B_i$ and $L_i\geq 0$}
		\label{fig:noloops}
	\end{subfigure}%
	\begin{subfigure}{.5\textwidth}
		\centering
		\begin{tikzpicture}[scale=0.9]
			
			
			\coordinate (A) at (0,0);
			\coordinate (B) at (6,5);
			\draw [name path=A--B,line width=1pt] (A) -- (B);
			
			\coordinate (C) at (6,0);
			\coordinate (D) at (0,5);
			\draw [name path=C--D,line width=1pt] (C) -- (D);
			
			\coordinate (E) at (6,3.5);
			\coordinate (F) at (0,3.5);
			\draw [name path=E--F,line width=1pt,label=$test$] (E) -- (F);
			
			
			\path [name intersections={of=A--B and E--F,by=G}];
			\node [fill=red,inner sep=3pt,shape = circle,label=-90:\large$X_{i}$] at (G) {};

			\path [name intersections={of=C--D and E--F,by=H}];
			\node [fill=red,inner sep=3pt,shape = circle,label=-90:\large$X_{i-1}$] at (H) {};
			
			
			\coordinate (pi1) at (0,1.15);
			\node [fill=none,inner sep=0pt,shape = circle,label=-90:\large$\Pi_{i+1}$] at (pi1) {};
			
			\coordinate (pi2) at (0,4.25);
			\node [fill=none,inner sep=0pt,shape = circle,label=-90:\large$\Pi_{i}$] at (pi2) {};
			
			\coordinate (pi3) at (1.3,5);
			\node [fill=none,inner sep=0pt,shape = circle,label=-90:\large$\Pi_{i-1}$] at (pi3) {};
			
			
			\coordinate (BB) at (3,1.5);
			\node [fill=none,inner sep=0pt,shape = circle,label=-90:\large$\mathcal{B}$] at (BB) {};
			\draw [ draw=none, fill=blue, opacity=0.2]
			(0,0) -- (3,2.5) -- (6,0) -- cycle ;
		\end{tikzpicture}
		\caption{Bad Case: $B(\B,u_i) < B_i$ and $L_i < 0$}
		\label{fig:loops}
	\end{subfigure}
	\caption{Two cases for $\B$ (purple), defined by \eqref{eq:BeqCapB}. The condition $\B\cap\Pi_i\neq\emptyset$ fails based on the orientation of $X_i$ and $X_{i-1}$ ({red}). }
	\label{fig:loopnoloop}
\end{figure}

Using this equivalence we can restate our linear program for a quadrature, $(\SSS,W)$, as follows.
\begin{align}\label{lpprob_2d1}
	\text{minimise }   \quad\quad\quad & \sum_{i=1}^m w_iB_i& \\
	\text{subject to } \quad\quad\quad & L_i\geq 0 & i=1,2,\dots,m \nonumber\\
    & B_i\geq C(u_i) & i=1,2,\dots,m \nonumber\\
    &  u_i \in S^{N-1}    & i=1,2,\dots,m \nonumber\\
	&  C(u_i),B_i \in \R    & i=1,2,\dots,m \nonumber
\end{align}	

Here, $L_i$ is defined as in \eqref{eq:L_i}. Furthermore, the restrictions $L_i\geq 0$ and $B_i\geq C(u_i)$ for all $i$ imply that a $\B$ defined by $\B=\bigcap_{ i=1 }^m \Pi^-\left( u_i,B_i  \right)$ must be a convex set satisfying $B(\B,u_i)=B_i\geq C_i$. As such there exists a collection of points $\{p_i\}_{i=1}^m\subset \B$ such that $\langle p_i,u_i\rangle\geq C_i$, implying that $\left(\left(p_i\right)_{i=1}^m , \, \left(B_i\right)_{i=1}^m \right)$ is a feasible solution of our original linear problem \eqref{lpprob}. This, along with our earlier discussion, shows that \eqref{lpprob_2d1} is equivalent to \eqref{lpprob}, but phrased with far less numerically demanding constraints. 

However, due to no longer explicitly storing the points $(p_i)_{i=1}^m$, we are now limited in our explicit construction of sets. We can no longer define $\B^*=  \convh\left(\left\{p^*_i\right\}_{i=1}^m\right)$ from \Cref{prop:Bstar} based on our linear program outputs, and as such exclusively rely on $\B'=  \bigcap_{i=1}^m\Pi^-(u_i,B_i)$. {This construction, as mentioned in \Cref{rem:BstarinBprime} and shown in \Cref{exam:BstarinBprime}, is more conservative and therefore a safer choice than $\B^*$. As such we do not lose much in considering this more efficient method.}

Nevertheless, since $L_i$ represents the length of the $i$'th side this restriction allows us to consider an alternative linear program where we instead minimise $\sum_i L_i$, which equals the circumference of $\B'$. Optimising in mean width and circumference turns out to be equivalent in the case where $\SSS$ is uniformly distributed and $w_i=1/m$ for all $i$. When this is the case we have  $\theta_i-\theta_{i-1}=2\pi/m$ for all $i=2,3,\dots,m$ and $2\pi+\theta_1-\theta_{m}=2\pi/m$, which yields
\begin{align*}
    \sum_{i=1}^m L_i 
    &= \sum_{i=1}^m \frac{B_{i+1}  -\langle u_{i+1},u_i\rangle B_i  }{\sin(\theta_{i+1}-\theta_{i})}
        +\frac{B_{i-1} -\langle u_{i-1},u_i\rangle B_i  }{\sin(\theta_{i}-\theta_{i-1})}\\
    &= \sum_{i=1}^m \frac{B_{i+1}  -\cos(2\pi/m) B_i  }{\sin(2\pi/m)}
    +\frac{B_{i-1} -\cos(2\pi/m) B_i  }{\sin(2\pi/m)}\\
    &=\frac{1}{\sin(2\pi/m)}\left( \sum_{i=1}^mB_{i+1} + \sum_{i=1}^mB_{i-1} -2\sum_{i=1}^m \cos(2\pi/m)B_i\right)\\
    &=(2m)\frac{1-\cos(2\pi/m)}{\sin(2\pi/m)} \sum_{i=1}^mB_{i}/m. \\
\end{align*}
Since any optimal $(B_i)_{i=1}^m$ would minimise $\sum_i w_iB_i$ it must also minimise $\sum_i L_i$, making them equivalent objective functions in this case. For a uniformly distributed $\SSS$, we have that the optimal weights, $W=\{w_i\}_{i=1}^m$, $w_i=1/m$, yield at least a $\frac{\pi}{2m}$-accurate quadrature. Using the quadrature $(\SSS,W)$ will therefore allow us to optimise both mean width and circumference with optimal accuracy. In addition, we can use the efficient formulation of \eqref{lpprob_2d1} which significantly increases the computation speed.

\section{Approximating the mean width}\label{sec:quadraturecomp}
In this section, we approximate the mean width of a convex shape using point samples in the sense of \Cref{def:epsilon_accurate}.

In two dimensions, the simplest example of an $\epsilon$-accurate quadrature is the uniform quadrature $\left\{\Big(\big(\cos\left(\frac{2\pi i}{m}\right), \sin\left(\frac{2\pi i}{m}\right)\big), \frac{1}{m}\Big)\right\}_{i=1}^m \subset S^{N-1}$. To see this, we bound the difference \eqref{eq:epsilon_accurate_def}. By symmetry, we can split the integral into $2m$ identical segments of $\frac{2\pi}{2m}$ radians each, where each part has one endpoint on a quadrature point. Using the fact that $f$ is $L$-Lipschitz, the approximation error is hence at most
\begin{align*}
  \frac{2m}{2\pi} \int_0^{\frac{2\pi}{2m}} L \sqrt{\sin^2(\theta)+(1-\cos(\theta))^2}\,d\theta = \frac{8Lm}{\pi}\sin^2\left(\frac{\pi}{4m}\right) \le \frac{\pi}{2m}L.
\end{align*}
Hence, the uniform quadrature in two dimensions is $\frac{\pi}{2m}$-accurate. It is also straightforward to see that the uniform quadrature has dispersion $2\sin\left(\frac{\pi}{2m}\right) < \frac{\pi}{m}$.

As concrete examples of accurate quadratures in general dimensions, we may use the composite midpoint rule on the cubed hypersphere, which is defined as follows. Let $s \in \N^+$ be the number of subdivisions per dimension. Let $U = \left\{\frac{2i-s-1}{s} \colon i \in \{1,\dots,s\}\right\}$ denote $s$ uniformly distributed points on the segment $[-1,1]$. Then we can define a grid on the faces orthogonal on the $i$th dimension of the hypercube by taking Cartesian products
$$v_i \coloneqq \underbrace{U \times \cdots \times U}_{i-1\text{ times}} \times \{-1,1\} \times \underbrace{U \times \cdots \times U}_{N-i\text{ times}}.$$
The combined grid becomes $V \coloneqq \bigcup_{i = 1}^N v_i$. We number the points in $V$ from $1$ to $m \coloneqq 2Ns^{N-1}$. Then, $\left\{\left(\frac{1}{\norm{V_i}_2}V_i, \frac{1}{\norm{V_i}_2}\left(\frac{2}{s}\right)^{N-1}\right)\right\}_{i=1}^m \subset S^{N-1}$ defines a quadrature which we call the {\it cubed hypersphere quadrature} with $s$ subdivisions.

\begin{prop}\label{cubed_hypersphere_accurate}
  The cubed hypersphere quadrature with $s$ subdivisions is\\
  $\frac{N 2^N\sqrt{N-1}}{s}$-accurate and has dispersion bounded by $\frac{\sqrt{N-1}}{s}$.
\end{prop}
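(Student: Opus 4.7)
The plan is to verify the dispersion bound and the accuracy bound separately, reducing the sphere integral to integrals over the $2N$ cube faces via central projection.

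\emph{Dispersion bound.} Given any $u\in S^{N-1}$, I pick an index $i$ with $|u_i|=\max_k|u_k|$ and form the central projection $v \coloneqq \sgn(u_i)\,u/|u_i|$. Then $v_i=\pm 1$ and $\|v\|_\infty=1$, so $v$ lies on one face of the cube. The grid on this face is an $(N-1)$-dimensional rectangular grid with spacing $2/s$, so some midpoint $V_j$ on the same face satisfies $\|v-V_j\|_2 \le \sqrt{N-1}/s$ (the half-diagonal of a side-$(2/s)$ cube in the $N-1$ tangential coordinates). Since $v$ and $V_j$ share a $\pm 1$ coordinate, both have Euclidean norm at least $1$, and writing $\theta$ for the angle between them, the identity
\[
\|v-V_j\|^2 = (\|v\|-\|V_j\|)^2 + 2\|v\|\|V_j\|(1-\cos\theta) \ge 2(1-\cos\theta) = \left\|\frac{v}{\|v\|}-\frac{V_j}{\|V_j\|}\right\|^2
\]
gives $\left\|u-V_j/\|V_j\|\right\| \le \|v-V_j\|_2 \le \sqrt{N-1}/s$, establishing the dispersion claim.

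\emph{Accuracy bound.} I decompose $\int_{S^{N-1}} f\,d\sigma$ into $2N$ integrals over the cube faces. Parametrising each face by $y\in[-1,1]^{N-1}$ and changing variables via $y\mapsto v(y)/\|v(y)\|$ produces an integrand of the form $g_F(y)=f(v(y)/\|v(y)\|)\,J(y)$, where $J(y)$ is the surface-area Jacobian, an explicit function of $\|v(y)\|\in[1,\sqrt{N}]$. Each $[-1,1]^{N-1}$ is then split into $s^{N-1}$ axis-aligned sub-cubes of side $2/s$, on which the midpoint rule is applied. For a Lipschitz integrand with constant $L_F$, the midpoint rule incurs a per-cell error of at most $L_F \cdot (\sqrt{N-1}/s)\cdot (2/s)^{N-1}$ (cell half-diagonal times cell volume), so summing over the $2Ns^{N-1}$ cells gives a total error of
\[
2Ns^{N-1}\cdot L_F\cdot\frac{\sqrt{N-1}}{s}\cdot\left(\frac{2}{s}\right)^{N-1} = \frac{N\,2^N\sqrt{N-1}}{s}\cdot\sup_F L_F.
\]
The claim then reduces to showing the uniform Lipschitz bound $\sup_F L_F \le L+\|f\|_\infty$.

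\emph{Main obstacle.} Establishing the uniform Lipschitz bound on $g_F$ is the technical crux. A product-rule estimate gives $L_F \le L\cdot\|J\|_\infty\cdot\lambda_0 + \|f\|_\infty\cdot\lambda_1$, where $\lambda_0$ is the Lipschitz constant of the projection $y\mapsto v(y)/\|v(y)\|$, and $\lambda_1$ is the Lipschitz constant of $J$. The first factor is bounded by $1$ by applying the same law-of-cosines identity used for the dispersion step (combined with the fact that $y\mapsto v(y)$ is an isometry onto the face); the second requires explicit control of $J$ and $\nabla J$ in terms of $\|v(y)\|\in[1,\sqrt{N}]$. Making all the dimensional constants line up so that the combined bound collapses exactly to $L+\|f\|_\infty$, yielding the advertised $\epsilon = N\,2^N\sqrt{N-1}/s$, is where the real bookkeeping lives; by contrast, the dispersion argument and the midpoint-rule structure are immediate once the central projection picture is set up.
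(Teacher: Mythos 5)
Your proposal follows essentially the same route as the paper: decompose the sphere into the projections of the $2N$ cube faces, pull the integral back to each face via central projection, split the face into $s^{N-1}$ sub-cubes, bound the Lipschitz constant of the transformed integrand, and sum the midpoint-rule errors; for dispersion, note that radial projection from outside the unit ball is a contraction. One place you are actually \emph{more} careful than the paper is the dispersion bound: the paper simply asserts that ``distances only get smaller when projected onto the unit hypersphere,'' whereas you justify this with the law-of-cosines identity. That is worth keeping.

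The one piece you label the ``main obstacle'' and leave open — establishing $\sup_F L_F \le L + \|f\|_\infty$ — is in fact the short, final step of the paper's proof, and there is no hidden cancellation of dimensional constants. On a face $F_i$ one has $\|v\|_2 \ge 1$ throughout, so: (a) the projection $v \mapsto v/\|v\|_2$ is $1$-Lipschitz (exactly your law-of-cosines observation), hence $f(v/\|v\|_2)$ is $L$-Lipschitz on $F_i$; (b) with the paper's Jacobian factor $J(v) = 1/\|v\|_2$, one has $0 < J \le 1$ and $J$ is $1$-Lipschitz on $\{\|v\|_2 \ge 1\}$; (c) a one-line product estimate, using $J \le 1$ to weight the $L$-term and $|f| \le \|f\|_\infty$ to weight the Lipschitz constant of $J$, gives $|g_F(u) - g_F(v)| \le \|u - v\|_2\,(L + \|f\|_\infty)$. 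So your $\lambda_0 = 1$, $\|J\|_\infty \le 1$, $\lambda_1 \le 1$, and the ``bookkeeping'' collapses immediately. To make your proposal a complete proof you should write out these three lines rather than flag them as an obstacle; everything else — the face decomposition, the per-cell half-diagonal times cell-volume bound, the count of $2N s^{N-1}$ cells, and the contraction-based dispersion bound — matches the paper.
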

The proof is deferred to the end of this section.

Once we have an $\epsilon$-accurate quadrature, we can transform any set $\{u\}_{i=1}^m$ with small dispersion into an accurate quadrature using the following.

\begin{prop}
  Let $\SSS = \{u_i\}_{i=1}^m \subset S^{N-1}$ have dispersion $\delta = \disp(\SSS)$, and $\{(v_j,z_j)\}_{j=1}^m$ be an $\epsilon$-accurate quadrature. For $j = 1,\dots,m$, let $p_j$ be the index of a point in $\SSS$ closest to $v_j$. Let $p^{-1}_i \coloneqq \left\{j \in \{1,\dots,m\} \colon p_j = i\right\}$ be a set of indices of points in $\{v_j\}_{j=1}^m$ closest to $u_i$. For $i = 1,\dots,m$, let $w_i = \sum_{j \in p^{-1}_i} z_j$. Then $\{(u_i,w_i)\}_{i=1}^m$ is a $\delta+\epsilon(1+\delta)$-accurate quadrature.
\end{prop}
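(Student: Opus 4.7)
The plan is to reduce the approximation error of the new quadrature $\{(u_i,w_i)\}_{i=1}^m$ to that of the given $\epsilon$-accurate quadrature $\{(v_j,z_j)\}_{j=1}^m$ plus an error controlled by the dispersion $\delta$. The key observation is the identity
\[
\sum_{i=1}^m f(u_i)\,w_i \;=\; \sum_{i=1}^m f(u_i)\sum_{j\in p^{-1}_i}z_j \;=\; \sum_{j=1}^m f(u_{p_j})\,z_j,
\]
which simply regroups the weights $z_j$ according to the nearest-neighbour map $j\mapsto p_j$.

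Next I would apply the triangle inequality, for any $L$-Lipschitz $f\colon S^{N-1}\to\R$, to obtain
\[
\left|\int_{S^{N-1}} f(u)\,d\sigma(u) - \sum_{i=1}^m f(u_i)w_i\right|
\;\le\;
\underbrace{\left|\int_{S^{N-1}} f(u)\,d\sigma(u) - \sum_{j=1}^m f(v_j)z_j\right|}_{\text{(I)}}
\;+\;
\underbrace{\left|\sum_{j=1}^m \bigl(f(v_j)-f(u_{p_j})\bigr)z_j\right|}_{\text{(II)}}.
\]
Term (I) is at most $\epsilon(L+\|f\|_\infty)$ by the $\epsilon$-accuracy of $\{(v_j,z_j)\}_{j=1}^m$. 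For (II), since $p_j$ is the index of a point of $\SSS$ nearest to $v_j$, the definition of dispersion yields $\|v_j-u_{p_j}\|\le\delta$, so the Lipschitz property gives $|f(v_j)-f(u_{p_j})|\le L\delta$, whence $\text{(II)}\le L\delta\sum_{j=1}^m z_j$.

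The only nontrivial step is bounding $\sum_j z_j$. Applying $\epsilon$-accuracy to the constant function $f\equiv 1$ (which is $0$-Lipschitz with $\|f\|_\infty=1$) yields $\left|1-\sum_j z_j\right|\le\epsilon$, hence $\sum_j z_j\le 1+\epsilon$. Combining gives
\[
\left|\int_{S^{N-1}} f(u)\,d\sigma(u) - \sum_{i=1}^m f(u_i)w_i\right|
\;\le\; \epsilon(L+\|f\|_\infty) + L\delta(1+\epsilon)
\;\le\; \bigl(\delta+\epsilon(1+\delta)\bigr)\bigl(L+\|f\|_\infty\bigr),
\]
where the last inequality follows since the coefficient of $L$ matches exactly and the coefficient of $\|f\|_\infty$ on the left, namely $\epsilon$, is at most $\delta+\epsilon+\epsilon\delta$. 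This is precisely the claimed $\bigl(\delta+\epsilon(1+\delta)\bigr)$-accuracy, and I do not foresee any real obstacle beyond the bookkeeping just described.
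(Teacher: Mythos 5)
Your proof is correct and follows essentially the same route as the paper's: regroup the $z_j$ via the nearest-neighbour map, apply the triangle inequality against the known $\epsilon$-accurate quadrature, use Lipschitz continuity and the dispersion bound $\|v_j-u_{p_j}\|\le\delta$ to control the regrouping error, and bound $\sum_j z_j\le 1+\epsilon$ by testing on $f\equiv 1$. The final arithmetic checks out (the $L$-coefficients $\epsilon+\delta(1+\epsilon)$ and $\delta+\epsilon(1+\delta)$ are identical, and the $\|f\|_\infty$-coefficient only increases), so nothing is missing.
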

\begin{proof}
  Let $f \colon S^{N-1} \to \R$ be $L$-Lipschitz and have absolute value bounded by $\norm{f}_\infty$, then since $\{(v_j,z_j)\}_{j=1}^m$ is an $\epsilon$-accurate quadrature, we have
  \begin{align}\label{eq:epsilon_accurate}
    \left|\int_S f(u) \,d\sigma(u) - \sum_{j=1}^m f(v_j) z_j\right| \le \epsilon (L+\norm{f}_\infty).
  \end{align}
  Expanding the definitions of $w_i$ and $p^{-1}$, we get
  \begin{align*}
    \sum_{i=1}^m f(u_i) w_i &= \sum_{i=1}^m f(u_i) \sum_{j \in p^{-1}_i} z_j = \sum_{i=1}^m \sum_{j \in p^{-1}_i} f(v_j) z_j + z_j(f(u_i)-f(v_j))\\
                            &= \sum_{j=1}^m f(v_j) z_j + \sum_{j=1}^m z_j(f(u_{p_j})-f(v_j)).
  \end{align*}
   The second term can be bounded by $f$ being Lipschitz continuous, giving
   \begin{align*}
     \left|\sum_{j=1}^m z_j(f(u_{p_j})-f(v_j))\right| \le \sum_{j=1}^m z_j L \norm{u_{p_j}-v_j}_2.
   \end{align*}
   Furthermore, by the definition of $p_j$ as the index of a closest points in $\SSS$, and the definition of $\disp(\SSS)$, we have $\norm{u_{p_j}-v_j}_2 \le \delta$ for $j = 1,\dots,m$. Finally, we can bound $\sum_{j=1}^m z_j \le 1+\epsilon$ by inserting the constant function $f = 1$ into \eqref{eq:epsilon_accurate}.
  
  Combining everything, we get
  \begin{align*}
    \left|\int_S f(u) \,d\sigma(u) - \sum_{j=1}^m f(u_j) w_j\right| &\le \left|\int_S f(u) \,d\sigma(u) - \sum_{j=1}^m f(v_j) z_j\right| + \delta L (1+\epsilon)\\
    \le \epsilon (L+\norm{f}_\infty) + \delta L (1+\epsilon) &\le (\delta + \epsilon(1+\delta))(L+\norm{f}_\infty).
  \end{align*}
  Which is the definition of being a $\delta + \epsilon(1+\delta)$-accurate quadrature.
\end{proof}

\subsection*{Proof of \texorpdfstring{\Cref{cubed_hypersphere_accurate}}{}}

\begin{proof}
  Let $F_i$ denote the $i$th of the $2N$ faces of t he hypercube in $N$ dimensions and let $S_i$  be its projection on the unit hypersphere $S^{N-1}$. Then, 
  \begin{align*}
    \int_{S^{N-1}} f(u) \,d\sigma(u) = \sum_{i=1}^{2N} \int_{S_i} f(u) \,d\sigma(u).
  \end{align*}
  We perform a transformation of variables $u = \frac{1}{\norm{v}_2}v$ to transform the integral over $S_i$ to one over $F_i$.
  \begin{align}\label{eq:face_int}
    \int_{S_i} f(u) \,d\sigma(u) = \int_{F_i} f\left(\frac{1}{\norm{v}_2}v\right) \frac{1}{\norm{v}_2} \,dv.
  \end{align}
  The cubed hypersphere quadrature corresponds to approximating the integral \eqref{eq:face_int} with the composite midpoint rule. That is, the integral is divided into $s^{N-1}$ hypercubes with side lengths $\frac{2}{s}$, each approximated by the value at their midpoint. Let the $j$th of these hypercubes be called $F_{ij}$.
  \begin{align*}
    \int_{F_i} f\left(\frac{1}{\norm{v}_2}v\right) \frac{1}{\norm{v}_2} \,dv = \sum_{j=1}^{s^{N-1}} \int_{F_{ij}} f\left(\frac{1}{\norm{v}_2}v\right) \frac{1}{\norm{v}_2} \,dv.
  \end{align*}
  Note that for all $v \in F_i$, we have $\norm{v}_2 \ge 1$. So since $f$ is $L$-Lipschitz, $f\left(\frac{1}{\norm{v}_2}v\right)$ is also $L$-Lipschitz for $v \in F_i$. Additionally, $\frac{1}{\norm{v}_2}$ is $1$-Lipschitz for $v \in F_i$. Hence, for $u,v \in F_i$,
  \begin{align*}
    &\left|f\left(\frac{1}{\norm{u}_2}u\right) \frac{1}{\norm{u}_2} - f\left(\frac{1}{\norm{v}_2}v\right) \frac{1}{\norm{v}_2}\right|\\
    \le &\left|f\left(\frac{1}{\norm{u}_2}u\right) - f\left(\frac{1}{\norm{v}_2}v\right)\right| \frac{1}{\norm{u}_2} + \left|f\left(\frac{1}{\norm{v}_2}v\right) \left(\frac{1}{\norm{u}_2}-\frac{1}{\norm{v}_2}\right)\right|\\
    \le &\norm{u-v}_2 (L+\norm{f}_\infty).
  \end{align*}
  So $f\left(\frac{1}{\norm{v}_2}v\right) \frac{1}{\norm{v}_2}$ is Lipschitz over $F_i$ with constant $L+\norm{f}_\infty$, where $\norm{f}_\infty$ is an upper bound on the absolute value of $f$. Consider the $j$th hypercube, let its midpoint be $V_j$. Inside the hypercube, the maximum distance to the midpoint is $\frac{\sqrt{N-1}}{s}$, so we get an error bounded by $\frac{\sqrt{N-1}}{s}(L+\norm{f}_\infty)$. Specifically,
  \begin{align*}
    \left|\int_{F_{ij}} f\left(\frac{1}{\norm{v}_2}v\right) \frac{1}{\norm{v}_2} \,dv - \int_{F_{ij}} f\left(\frac{1}{\norm{V_j}_2}V_j\right) \frac{1}{\norm{V_j}_2} \,dv\right|\\
    \le \int_{F_{ij}} \frac{\sqrt{N-1}}{s}(L+\norm{f}_\infty) \,dv.
  \end{align*}
  Next, note that the volume of $F_{ij}$ is $\left(\frac{2}{s}\right)^{N-1}$, so
  \begin{align*}
    \int_{F_{ij}} f\left(\frac{1}{\norm{V_i}_2}V_i\right) \frac{1}{\norm{V_i}_2} \,dv = f\left(\frac{1}{\norm{V_i}_2}V_i\right)\ \frac{1}{\norm{V_i}_2}\left(\frac{2}{s}\right)^{N-1} = f(u_i) w_i,
  \end{align*}
  where $u_i$ and $w_i$ are the cubed hypersphere quadrature. Hence the $i$th hypercube contributes an error at most
  \begin{align*}
    \left|\int_{F_{ij}} f\left(\frac{1}{\norm{v}_2}v\right) \frac{1}{\norm{v}_2} \,dv - f(u_i)w_i\right| \le \frac{\sqrt{N-1}}{s}(L+\norm{f}_\infty)\left(\frac{2}{s}\right)^{N-1}.
  \end{align*}
  Summing over the $m = 2Ns^{N-1}$ hypercubes, we get an error at most
  \begin{align*}
    \left|\int_S f(u) \,d\sigma(u) - \sum_{i=1}^m f(u) w_i\right| &\le 2Ns^{N-1} \frac{\sqrt{N-1}}{s}(L+\norm{f}_\infty)\left(\frac{2}{s}\right)^{N-1}\\
                                                        &= \frac{N 2^N\sqrt{N-1}}{s}(L+\norm{f}_\infty).
  \end{align*}
  Hence the quadrature is $\frac{N 2^N\sqrt{N-1}}{s}$-accurate.

  The dispersion can also be bounded by considering the hypercubes $F_{ij}$. Again, all points in the $j$th hypercube of the $i$th face are within a distance $\frac{\sqrt{N-1}}{s}$ from the midpoint. Distances in the hypercube only get smaller when they are projected onto the unit hypersphere $S^{N-1}$, and the projected midpoints are exactly the cubed hypersphere quadrature points. Hence, the dispersion is bounded by $\frac{\sqrt{N-1}}{s}$.
\end{proof}

\section{Conclusion and Final Remarks}

In conclusion, this paper has presented a novel algorithm for the computation of valid environmental contours. The proposed algorithm ensures that the contours satisfy the outreach requirements while maintaining a minimal mean width. We have also presented a streamlined algorithm for two dimensions, which improves computation speed for this specific case. Both of the considered methods have been illustrated by numerical examples.

Furthermore, as these methods rely on numerical integration, we also provided a generic {construction for making} arbitrarily accurate quadratures. 

Lastly, rigorous examination of convergence and existence of solutions has been conducted, ensuring the reliability and accuracy of the proposed methods. Convergence properties have been thoroughly analyzed, including the convergence of the optimal approximate mean width to the optimal mean width, as well as convergence in terms of the Hausdorff metric. These analyses ensure that any approximate solution will give an arbitrarily near-optimal contour, and that any optimal contour can be found by searching the near-optimal approximations.

\section*{Acknowledgements}
The authors acknowledge financial support by the Research Council of Norway under the SCROLLER project, project number 299897 (Åsmund Hausken Sande).

\bibliographystyle{abbrvdin}
\bibliography{refs}








\end{document}